\newtheorem{theorem}{Theorem}[section]
\newtheorem{lemma}[theorem]{Lemma}
\newtheorem{proposition}[theorem]{Proposition}
\newtheorem{remark}[theorem]{Remark}
\newtheorem{definition}[theorem]{Definition}
\newcommand*{\medcap}{\mathbin{\scalebox{1.3}{\ensuremath{\cap}}}}
\newcommand\bs[1]{{\boldsymbol #1}}
\def\P{\mathbb{P}}
\def\Q{\mathbb{Q}}
\def\R{\mathbb{R}}
\def\N{\mathbb{N}}
\def\one{{\mathbf 1}}
\numberwithin{equation}{section}
\title[Fluid limit for condensing zero-range processes]{Fluid limit for the coarsening phase \\ of the condensing zero-range process}
\author[I. Armend\'ariz, J. Beltr\'an, D. Cuesta and M. Jara]
{In\'es Armend\'ariz, Johel Beltr\'an, Daniela Cuesta and Milton Jara}
\address{In\'es Armend\'ariz
\hfill\break\indent
Universidad de Buenos Aires \& IMAS-CONICET-UBA, 
\hfill\break\indent
Buenos Aires, Argentina.}
\email{{\tt iarmend@dm.uba.ar}}
\address{Johel Beltr\'an
\hfill\break\indent
Pontificia Universidad Cat\'olica del Per\'u \& IMCA,
\hfill\break\indent
Lima, Per\'u.}
\email{{\tt johel.beltran@pucp.edu.pe}}
\address{Daniela Cuesta
\hfill\break\indent
Universidad de Buenos Aires \& IMAS-CONICET-UBA \& Universidad Torcuato Di Tella,
\hfill\break\indent
Buenos Aires, Argentina.}
\email{{\tt dcuesta@utdt.edu}}
\address{Milton Jara
\hfill\break\indent
Instituto de Matem\'atica Pura e Aplicada,
\hfill\break\indent
Rio de Janeiro, Brasil.}
\email{{\tt monets@impa.br}}
\begin{document}
\sloppy	

\begin{abstract} 
\noindent 
We prove a fluid limit for the coarsening phase of the condensing zero-range process on a finite number of sites. When time and occupation per site are linearly rescaled by the total number of particles, the evolution of the process is described by a piecewise linear trajectory in the simplex indexed by the sites. The linear coefficients are determined by the trace process of the underlying random walk on the subset of non-empty sites, and the trajectory reaches an absorbing configuration in finite time. A boundary of the simplex is called absorbing for the fluid limit if a trajectory started at a configuration in the boundary remains in it for all times. We identify the set of absorbing configurations and characterize the absorbing boundaries.   \\

\end{abstract}

\maketitle

\section{Introduction}\label{intro}

The zero-range process was introduced in \cite{Spitzer} as an interacting particle system on a graph, where the jump rates of particles depend only on the occupation of the departure site. Once a particle jumps, it changes its position according to a transition probability $p$.  When the jump rates are decreasing, particles spend more time in highly occupied sites and the dynamics favours the formation of clusters. As the density diverges, in a typical stationary configuration, a macroscopic number of particles accumulates on a single site, the condensate. 
During the past twenty years, this phenomenon has been rigorously studied for the families of condensing zero-range processes with decreasing rates introduced in \cite{ Drouffe_1998, Evans_2000}, and several results including the equivalence of ensembles, a law of large numbers
and a central limit theorem for the condensate size have been established in \cite{MR1797308, MR2013129, AL, JBCL, MR3071386}.

The next step is to study the time evolution of the process, which involves several time scales. The slowest time scale is related to the dynamics of the condensate. When the graph $V$ is finite and the number of particles tends to infinity, starting from a stationary initial configuration with a condensate, the position of the condensate follows a Markov chain on $V$ with jump rates that are proportional to the capacities between pairs of sites of the random walk with transition probabilities $p$. This metastable description was first derived for reversible, supercritical systems  \cite{JBCL}, extended to totally asymmetric dynamics \cite{Landim_asymm}, general non-reversible dynamics \cite{MR3922538}, and recently to critical, reversible zero-range processes \cite{LMS_critical}. The analogous problem in the thermodynamical limit, when the number of sites in $V$ increases together with the number of the particles, was studied in \cite{AGL}.

Slower times scales are related to the coarsening phase that leads to a stationary state with a condensate at a single site. Starting from an initial uniform distribution, sites exchange particles until only a few accumulate the excess number of particles; these are the cluster sites. This transition occurs on a fast hydrodynamic time scale. On a slower time scale, cluster sites interact, with some clusters growing (macroscopically) at the expense of the others, up to the time when only one of them remains. The different stages of this description were first predicted in \cite{MR1987434, MR2165701}, which were followed by several studies in the theoretical physics literature \cite{MR1987434, MR2165701, MR2314354, MR3491315, MR3584414}. 

Beltr\'an, Jara and Landim \cite{BJL} consider the second phase of the coarsening transition for zero-range processes on a fixed graph $V$. They show that on a  time scale of order $N^2$, $N$ the diverging number of particles, the vector of linearly rescaled cluster sizes converges to an absorbed diffusion in the simplex
$\Sigma=\{ \sum_{i\in V} u_i=1,\, u_i\ge 0,\, i\in V\}$. The vector is absorbed upon hitting a boundary, and thereafter, it evolves as a diffusion on a lower dimensional simplex. The process stops when the diffusion reaches a vertex of $\Sigma$, the macroscopic manifestation of the condensed phase. These results confirm the previous description, and identify the precise dynamics of the transition.

In this paper we establish the fluid dynamics of the initial phase of the coarsening process for a broad family of zero-range processes on a fixed graph, including condensing zero-range processes \cite{ Drouffe_1998, Evans_2000}. If time and occupation per site are linearly rescaled by the total number of particles $N$, the process approximately follows an explicit piecewise linear trajectory in $\Sigma$, with velocities determined by the rates of the random walk underlying the zero-range dynamics. The trajectory evolves in progressively lower dimensional simplices until it eventually stabilizes when it arrives at an absorbing configuration. 

Boundaries of the simplex are characterized as absorbing or non-absorbing.
Inspired by the characterization of stochastic processes by their martingale properties, we identify the piecewise linear trajectory as the unique solution to a generalized ODE that instantaneously exits non-absorbing boundaries. For initial configurations on absorbing boundaries, the ODE formulation alone suffices to characterize the trajectory, and the problem reduces to proving that weak limits satisfy the equation. To derive the result for the general case, we show that the rescaled zero-range process exits non-absorbing boundaries using a coupling construction that compares the process with an open queueing network.

It is a simple observation that when the jump rates out of non-empty sites are constant, the zero-range process reduces to a closed Jackson network on the graph. Chen and Mandelbaum derive a fluid limit theorem for this process when time is scaled linearly in the number of particles \cite{chen-mandelbaum-fluid}, and show that under diffusive scaling it converges to a Brownian motion reflected at the boundaries of the simplex  \cite{chen-mandelbaum-diffusion}. The first result is a particular case of the fluid limit derived in this paper: closed Jackson networks belong to the class of zero-range processes covered by our results. On the other hand, we see that the macroscopic evolutions of Jackson networks and condensing zero-range processes already diverge on a diffusive time scale, as the latter converge to diffusions absorbed at the boundaries \cite{BJL}.

Over the years, there have been several studies of the hydrodynamic limit of the zero-range process describing the macroscopic evolution of the empirical density as the number of sites tends to infinity, in different regimes \cite{KL,MR4089493}. For condensing zero-range processes this is a difficult problem, as the standard approach does not apply, and moreover the expected limiting partial differential equation is not always well-posed; partial results are established in \cite{MR3296275,Loula-Stama}.

The article is organized as follows.

In \S~\ref{Namr} we introduce notation and state a series of results. We provide an explicit construction of the trajectory of the fluid limit  
and state Theorem~\ref{c2}, our main result, in \S~\ref{FLimit}. 

In \S~\ref{charac} we propose a new characterization of the fluid limit as a weak solution (in some precisely defined sense) to an ordinary differential equation that instantaneously exits non-absorbing sets, Definition \ref{propab}, and identify the solution to the ORP described in \S~\ref{orp0} as the unique solution to this problem. 

We study the family of rescaled zero-range processes in \S~\ref{s2}. 
We prove tightness in \S~\ref{tight}, and show that weak limits of the zero-range process are supported on solutions of the ODE, \S~\ref{mgle-prblm}. In order to completely characterize fluid limits, it remains to prove that they exit non-absorbing boundaries. We first establish this property when the zero-range process reduces to a system of queues (constant rates) and extend it to the general case by a coupling argument in \S~\ref{coupling}. We collect all the information to conclude the proof of our main result in \S~\ref{dem-c2}.

\section{Setup and results}\label{Namr}
\subsection{The zero-range process}\label{zrp}	

Throughout this article we denote $\N_0=\{0,1,2,\dots\}$ and $\R_+=[0,\infty)$. Let us fix a finite set $V$ and consider the zero-range process on $V$ with jump rates
\begin{align}
g:\N_0\rightarrow \R_+, \quad \text{such that} \;g(0)=0,
\end{align}
and underlying random walk determined by an \emph{irreducible} set of transition rates
\begin{align}\label{rates}
r=\big(r(i,j)\big)_{i,j\in V},\quad \text{so that} \quad r(i,i)=0, \; \forall i\in V.
\end{align}
In this process, particles are indistinguishable and a particle leaves a given site $i$ at rate $r(i)g(n)$, where $n$ is the number of particles at $i$ and $r(i)$ is given in \eqref{probr} below. Once a particle jumps from $i$, it moves to a site $j$ chosen with probability given by
\begin{align}\label{probr}
p(i,j):=\frac{r(i,j)}{r(i)}, \quad \text{where} \quad r(i):=\sum_{j\in V} r(i,j).
\end{align}
Formally, the zero-range process $(\eta_s,\,s\ge 0)$ is a Markov process with state space $\N_0^V$ and generator acting on each $f:\N_0^V\rightarrow\R$ by
	\begin{align}\label{gen-zrp}
	Lf(\eta)=\sum_{i,j\in V}g(\eta(i))\,r(i,j)\,[f(\eta^{i,j})-f(\eta)],\quad \eta\in \N_0^V,
	\end{align}
with
\begin{equation}\label{eta-ij}
\eta^{i,j}(l):=\left\{
\begin{array}{ll}
\eta(i)-1,& \text{ for } l=i,\\
\eta(j)+1,& \text{ for } l=j,\\
\eta(l),& \text{ for } l\neq i,j.
\end{array}\right.
\end{equation}
Since the number of particles is preserved by these transitions, when the zero-range process $(\eta_s,\,s\ge 0)$ starts with $N$ particles, the rescaled process $(\zeta^N_t,\,t\ge 0)$ defined as
\begin{align}\label{zeta}
\zeta^N_t=\dfrac{\eta_{tN}}{N},\quad t\ge 0,
\end{align}
can be viewed as a Markov process with state space the standard simplex on the set of vertices $V$:
\begin{align}\label{simplex}
\Sigma=\big\{u \in [0,1]^{V}:\sum\limits_{i\in V}u(i)=1\big\}.
\end{align}
We aim to study the sequence $(\zeta^N_t,\,t\ge 0)$, $N\ge 1$, under the assumption that $g(n)$ converges to a strictly positive constant. Since such constant can be absorbed by the transition rates $r$, we may and will assume that
\begin{align}\label{rate-f}
\lim_{n \to \infty} g(n)=1.
\end{align}
We will also assume that 
\begin{align}\label{rate-fbis}
g(n)\ge 1,\,n\in \N.
\end{align}
This assumption will only be used in the coupling construction in Section \ref{coupling}, to minimize technical details, but the arguments can be adapted without much difficulty to the general case.

Denote by $D(\R_+,\Sigma)$ the space  of $\Sigma$-valued right continuous functions with left limits endowed with the Skorokhod topology and the respective borel sigma field. In addition, let $C(\R_+, \Sigma)$ stand for the space of $\Sigma$-valued continuous trajectories. Consider the following sequence of discretizations of $\Sigma$:
\begin{align}
\Sigma_{N}=\big\{u\in\Sigma:Nu\in\N_0^V\big\}, \quad N\in \N.
\end{align}
For each $N\in \N$ and $u\in \Sigma_N$, let $\P^N_u$ be the law on $D(\R_+,\Sigma)$ of the rescaled process $(\zeta^N_t,\,t\ge 0)$, when the zero-range process $(\eta_s,\,s\ge 0)$ starts at $Nu\in\N_0^V$.

\begin{proposition}\label{tight2}
Under hypothesis \eqref{rate-f}, the family of probability laws $\big( \P^N_u\big)_{N\in \N, u\in \Sigma_N}$ is sequentially compact. In addition, any limit point is supported on $C(\R_+, \Sigma)$.
\end{proposition}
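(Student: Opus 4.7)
The plan is to use the classical martingale method for tightness of Markov jump processes. Hypothesis \eqref{rate-f}, together with $g(0)=0$, implies
\begin{align*}
M := \sup_{n \ge 0} g(n) < \infty,
\end{align*}
since $g(n) \to 1$ leaves only finitely many values above $2$. The generator of $(\zeta^N_t)_{t \ge 0}$, obtained from \eqref{gen-zrp} after the space-time rescaling \eqref{zeta}, acts on $f:\Sigma \to \R$ by
\begin{align*}
L^N f(\zeta) = N \sum_{i,j \in V} g(N\zeta(i))\, r(i,j)\, [f(\zeta + N^{-1}(e_j - e_i)) - f(\zeta)],
\end{align*}
where $e_k \in \R^V$ denotes the $k$th unit vector. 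Applied to the coordinate function $f_k(\zeta) = \zeta(k)$, this gives
\begin{align*}
L^N f_k(\zeta) = \sum_{i \in V} g(N\zeta(i))\, r(i,k) - g(N\zeta(k))\, r(k),
\end{align*}
bounded in absolute value by $C_0 := 2 M \max_{i \in V} r(i)$ uniformly in $N$ and $\zeta \in \Sigma$.

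Next I would invoke Dynkin's formula: the process
\begin{align*}
M^{N,k}_t := \zeta^N_t(k) - \zeta^N_0(k) - \int_0^t L^N f_k(\zeta^N_s)\, ds
\end{align*}
is a martingale whose predictable quadratic variation is bounded by $C_1 t/N$, since each of the $O(N)$ jumps per unit time shifts $f_k$ by at most $1/N$. Doob's inequality then yields $\E[\sup_{s \le T} (M^{N,k}_s)^2] \le 4 C_1 T/N \to 0$, so the martingale part vanishes uniformly on compact time intervals. Meanwhile the compensator $A^{N,k}_t := \int_0^t L^N f_k(\zeta^N_s)\, ds$ is $C_0$-Lipschitz in $t$ uniformly in $N$, so the family $\{A^{N,k}\}_N$ is equicontinuous and bounded. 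Combined with the compactness of $\Sigma$ containing the initial data, this yields tightness of each coordinate $\zeta^N_\cdot(k)$ in $D(\R_+, \R)$, hence of $(\P^N_u)$ in $D(\R_+, \Sigma)$.

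For the second assertion, note that jumps of $\zeta^N$ have size exactly $1/N$ in each coordinate, so $\sup_{t \ge 0} \|\zeta^N_t - \zeta^N_{t-}\|_\infty \le 1/N \to 0$. A standard property of the Skorokhod topology then ensures that every weak limit point of càdlàg processes with vanishing maximum jump size is supported on $C(\R_+, \Sigma)$. I foresee no essential obstacle: the proof is routine once one observes that $g$ is bounded under \eqref{rate-f}, which simultaneously keeps the compensator Lipschitz and makes the martingale fluctuations of order $N^{-1/2}$.
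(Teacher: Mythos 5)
Your proof is correct and follows essentially the same route as the paper: Dynkin's decomposition of the coordinate functions, boundedness of $g$ (hence of the drift $L_N f_k$) from \eqref{rate-f}, and Doob plus the quadratic-variation bound of order $T/N$ to kill the martingale part. The only cosmetic difference is that the paper deduces continuity of limit points from the vanishing modulus of continuity (Billingsley, Theorems 13.2 and 13.4) rather than from the vanishing jump size, but both are standard and your uniform oscillation estimates give C-tightness of the vector directly.
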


Actually, we shall prove in our main result that the limit points of $( \P^N_u )$ are deterministic continuous paths on $\Sigma$. The description of such paths makes use of the trace of the underlying random walk in a fundamental fashion.

\subsection{Trace process}\label{tracep}

Let us recall the notion of the trace process. Let $(X^k_s,\,s\ge 0)$ stand for a continuous-time random walk on $V$, with transition rates $r$ and starting at $k\in V$. For each nonempty $B\subseteq V$, let us denote by $T^k_B$ the hitting time of $B$ starting at $k$:
\begin{align}\label{hitting}
T^k_B=\inf\{s\ge 0,\, X^k_s\in B\}. \quad \text{In addition, denote $T^k_j=T^k_{\{j\}}$, for $j\in V$}.
\end{align}
Since $r$ is irreducible, then $T^k_B$ is almost surely finite. Now, we define $r^A=(r^A(i,j))_{i,j\,\in A}$, for each nonempty $A\subseteq V$, as $r^A(i,i)=0$, $\forall i\in A$, and
\begin{align}\label{trace-rates}
r^A(i,j)=r(i,j) + \sum_{k\notin A} r(i,k)\,{\rm P}(T^k_A=T^k_j),\quad \forall i,\,j\in A, i\not = j.
\end{align}
Trivially, $r^V=r$. The set $r^A$ corresponds to the rates of an irreducible random walk on $A$, called the trace process. A more detailed description and properties of the trace process can be found in \cite{BL}, Section 6.1. From now on, we will refer to $r^A$ as \emph{the trace of $r$ on $A$}. 

We may easily compute $r^A$, recursively. When $A=V\setminus \{k\}$, equation \eqref{trace-rates} reduces to
\begin{align}\label{ator}
r^A(i,j)=r(i,j) + r(i,k)\,p(k,j),\quad \forall i,\,j\in A, i\not = j,
\end{align}
where $p(k,j)$ is given by \eqref{probr}. Since the trace of $r^A$ on any nonempty $B\subseteq A$ coincides with $r^B$ (see e.g. \cite{BL}) then \eqref{ator} can be used repeatedly to compute the trace of $r$ on any nonempty subset of $V$. 

\subsection{Minimal $r$-absorbing sets}

In this subsection we introduce a terminology for a class of subsets of $V$ and state some of its properties. For each nonempty $A\subseteq V$, and $i\in A$, let us define
\begin{align}\label{lambdaA}
\lambda^A(i):=\sum_{j\in A} [r^A(j,i)-r^A(i,j)]. \quad \text{Observe that} \; \sum_{i\in A} \lambda^A(i) = 0.
\end{align}

\begin{definition}\label{rabsor}
Let us say that a subset $A$ of $V$ is $r$-absorbing if $A=V$ or
\begin{align}
\lambda^{A\cup \{j\}}(j)\le 0, \quad \forall j\in V\setminus A.
\end{align}
\end{definition}
Notice that $\varnothing$ is trivially $r$-absorbing. We will be interested in the minimal $r$-absorbing set containing a given $S\subseteq V$. For this purpose, the following lemma provides a key result.

\begin{lemma}\label{a37}
Let $B$ be $r$-absorbing and let $A$ be a proper subset of $B$. If $\lambda^B(i)\le 0$, $\forall i\in B\setminus A$, then $A$ is also $r$-absorbing.
\end{lemma}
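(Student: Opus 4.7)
The plan is to prove a general identity relating $\lambda$ across nested trace sets, and then apply it in two cases, according to whether the new site $j$ lies in $B$ or in $V\setminus B$.

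The central step is the identity: whenever $A'\subsetneq B'\subseteq V$ with $A'\neq\varnothing$, $i\in A'$, and $p_k$ denotes the probability that the random walk with rates $r^{B'}$ starting at $k\in B'\setminus A'$ first enters $A'$ at site $i$, one has
\[
\lambda^{A'}(i) \;=\; \lambda^{B'}(i) \;+\; \sum_{k\in B'\setminus A'} p_k\,\lambda^{B'}(k). \qquad(\star)
\]
Interpreted as a current statement, $(\star)$ says that the net in-flow at $i$ after erasing $B'\setminus A'$ equals the net in-flow at $i$ before, plus the share of the net in-flow at each erased site $k$ that is rerouted to $i$. To prove $(\star)$, I would expand $r^{A'}=(r^{B'})^{A'}$ via \eqref{trace-rates}, use that the total exit rate out of $i$ is preserved by the trace (because $\sum_{l\in A'}\mathrm{P}_k(T_{A'}=T_l)=1$), and invoke the harmonicity of $k\mapsto p_k$ for $r^{B'}$ on $B'\setminus A'$ with boundary values $1$ at $i$ and $0$ on $A'\setminus\{i\}$. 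After these substitutions, the double sums over $(B'\setminus A')\times(B'\setminus A')$ cancel by the swap $\sum_{k,l}r^{B'}(k,l)p_l=\sum_{k,l}r^{B'}(l,k)p_k$. The main obstacle is carrying out this algebraic bookkeeping cleanly; everything else is essentially definitional.

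Once $(\star)$ is available, the case $j\in B\setminus A$ follows immediately by applying it with $B'=B$, $A'=A\cup\{j\}$, $i=j$, since then $B'\setminus A'\subseteq B\setminus A$ and
\[
\lambda^{A\cup\{j\}}(j)\;=\;\lambda^{B}(j)\;+\sum_{k\in B\setminus(A\cup\{j\})}p_k\,\lambda^{B}(k)\;\le\; 0,
\]
where non-positivity follows from the hypothesis $\lambda^B(i)\le 0$ for all $i\in B\setminus A$.

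For $j\in V\setminus B$ I would apply $(\star)$ twice. First, with $B'=B\cup\{j\}$, $A'=A\cup\{j\}$, $i=j$, to get
\[
\lambda^{A\cup\{j\}}(j)\;=\;\lambda^{B\cup\{j\}}(j)\;+\sum_{k\in B\setminus A}p_k\,\lambda^{B\cup\{j\}}(k).
\]
Second, with $B'=B\cup\{j\}$, $A'=B$, $i=k$; only the single site $\{j\}$ is erased, so the erasure probability is the one-step jump probability $q^{(k)}=r^{B\cup\{j\}}(j,k)/\sum_l r^{B\cup\{j\}}(j,l)$, and $(\star)$ becomes $\lambda^{B}(k)=\lambda^{B\cup\{j\}}(k)+q^{(k)}\,\lambda^{B\cup\{j\}}(j)$. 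Substituting back yields
\[
\lambda^{A\cup\{j\}}(j)\;=\;\Bigl(1-\sum_{k\in B\setminus A}p_k\,q^{(k)}\Bigr)\lambda^{B\cup\{j\}}(j)\;+\sum_{k\in B\setminus A}p_k\,\lambda^{B}(k).
\]
Since $B$ is $r$-absorbing, $\lambda^{B\cup\{j\}}(j)\le 0$; by hypothesis, $\lambda^{B}(k)\le 0$ for $k\in B\setminus A$; and the bracket lies in $[0,1]$ because $\sum_{k\in B}q^{(k)}=1$ and $p_k\in[0,1]$. Both terms are therefore non-positive, giving $\lambda^{A\cup\{j\}}(j)\le 0$ and completing the proof.
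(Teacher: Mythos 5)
Your identity $(\star)$ is exactly the paper's Lemma \ref{lambdaab} (since $1^{A'}_i(k)=P(T^k_{A'}=T^k_i)$), and your two-case argument ($j\in B\setminus A$ via one application, $j\in V\setminus B$ via two nested applications relating $\lambda^{A\cup\{j\}}$, $\lambda^{B\cup\{j\}}$ and $\lambda^{B}$) is the same strategy the paper follows, so the proposal is correct and essentially identical in approach. The only difference is cosmetic: you keep the hitting probabilities $p_k$ as exact weights in the final substitution and read off the sign from the bracket $1-\sum_k p_kq^{(k)}\in[0,1]$, whereas the paper first discards these weights by an inequality before substituting.
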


As an immediate consequence of Lemma \ref{a37} we have the following result.

\begin{lemma}\label{lproma}
Given $S\subseteq V$ there exists an $r$-absorbing subset $A$ such that
\begin{align}\label{proma}
S \subseteq A \quad \text{and} \quad \lambda^A(i)>0, \; \forall i\in A\setminus S.
\end{align}
\end{lemma}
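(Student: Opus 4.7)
The plan is to obtain $A$ by a finite descent from $V$, removing one site at a time while maintaining the $r$-absorbing property via Lemma~\ref{a37}. Concretely, set $A_0 = V$, which is $r$-absorbing by the very definition of Definition~\ref{rabsor}, and clearly contains $S$. Proceeding inductively, assume an $r$-absorbing set $A_k \supseteq S$ has been constructed. If $\lambda^{A_k}(i) > 0$ for every $i \in A_k \setminus S$, stop and declare $A := A_k$. Otherwise pick some $i_k \in A_k \setminus S$ with $\lambda^{A_k}(i_k) \le 0$ and set $A_{k+1} := A_k \setminus \{i_k\}$.

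The verification at each step is exactly what Lemma~\ref{a37} provides: taking $B = A_k$ and $A = A_{k+1}$ in that lemma, we have $A_{k+1} \subsetneq A_k$, and $B \setminus A = \{i_k\}$, and $\lambda^B(i_k) = \lambda^{A_k}(i_k) \le 0$, so Lemma~\ref{a37} yields that $A_{k+1}$ is $r$-absorbing. Moreover, since $i_k \notin S$, we still have $S \subseteq A_{k+1}$, so the inductive hypothesis is preserved.

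The procedure must terminate: at each non-terminating step $|A_{k+1}| = |A_k| - 1$, and always $|A_k| \ge |S|$, so the descent halts after at most $|V| - |S|$ steps. At termination, the set $A$ satisfies by construction the three required properties: it is $r$-absorbing, it contains $S$, and $\lambda^A(i) > 0$ for every $i \in A \setminus S$, which is precisely \eqref{proma}.

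There is no real obstacle here beyond correctly setting up the iteration and checking the hypotheses of Lemma~\ref{a37}; the only subtle point is to remember that the functionals $\lambda^{A_k}$ depend on the current set through the trace rates $r^{A_k}$, so after each deletion the condition $\lambda^{A_{k+1}}(j) \le 0$ must be re-examined with respect to the updated trace, rather than reused from the previous step. This is automatic in the formulation above since we only invoke $\lambda^{A_k}(i_k) \le 0$ at the step where it is needed.
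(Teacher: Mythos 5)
Your proof is correct and follows essentially the same route as the paper: a greedy descent from $V$ that deletes sites $i\notin S$ with $\lambda^{A_k}(i)\le 0$, invokes Lemma~\ref{a37} to preserve the $r$-absorbing property, and terminates by finiteness of $V$. The only cosmetic difference is that the paper removes all offending sites in one batch per iteration, whereas you remove them one at a time; Lemma~\ref{a37} covers both cases.
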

\begin{proof}
We provide an algorithm to generate such subset. Initialize with any $r$-absorbing $A_1$ containing $S$, for instance $A_1=V$. Compute
\begin{align}
O_1:=\{ i \in A_1\setminus S : \lambda^{A_1}(i)\le 0\}.
\end{align}
If $O_1= \varnothing$ then $A_1$ is the desired subset. Otherwise, set $A_2 := A_1\setminus O_1$ and clearly $A_2\subsetneq A_1$. By Lemma \ref{a37}, $A_2$ is $r$-absorbing. Since $S\subseteq A_2$, we may repeat the procedure. This process must stop since $V$ is finite and we get at the end the desired subset.
\end{proof}

We will show that the class of $r$-absorbing subsets is stable by intersection.

\begin{lemma}\label{abinter}
If $A$ and $B$ are $r$-absorbing then $A\cap B$ is $r$-absorbing.
\end{lemma}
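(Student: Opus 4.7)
The plan is to reduce the problem via Lemma~\ref{a37} applied to the $r$-absorbing set $A$ and the proper subset $A\cap B\subsetneq A$. Assuming $A\not\subseteq B$ (otherwise $A\cap B=A$ is already $r$-absorbing, and the case $B\subseteq A$ is symmetric), Lemma~\ref{a37} reduces the goal to showing $\lambda^{A}(i)\le 0$ for every $i\in A\setminus B$. By the symmetric reduction, the mirror inequality $\lambda^{B}(j)\le 0$ for every $j\in B\setminus A$ would serve equally well.

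I would prove these inequalities by induction on $|A|+|B|$. The trivial cases where one set is contained in the other are immediate. In the non-trivial case (both $A\setminus B$ and $B\setminus A$ non-empty), it suffices to exhibit a single $b\in B\setminus A$ with $\lambda^{B}(b)\le 0$: then Lemma~\ref{a37} yields that $B\setminus\{b\}$ is $r$-absorbing, one has $A\cap(B\setminus\{b\})=A\cap B$ since $b\notin A$, and $|A|+|B\setminus\{b\}|<|A|+|B|$, so the inductive hypothesis applies; symmetrically, a single $a\in A\setminus B$ with $\lambda^{A}(a)\le 0$ would do.

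The main technical step is therefore to rule out the scenario in which $\lambda^{A}(a)>0$ for every $a\in A\setminus B$ and $\lambda^{B}(b)>0$ for every $b\in B\setminus A$. For this I would exploit the one-step trace identity $\lambda^{A\setminus\{k\}}(i)=\lambda^A(i)+p^A(k,i)\lambda^A(k)$, which follows directly from the definition of the trace chain $r^A$ together with $\sum_{l\in A}\lambda^A(l)=0$. Substituting it into the two natural trace expansions of $\lambda^{(A\cap B)\cup\{i\}}(i)$—obtained by tracing from $A$ and from $B\cup\{i\}$ down to $(A\cap B)\cup\{i\}$—and eliminating the intermediate divergence $\lambda^{B\cup\{i\}}(k)$ yields a representation
\[
\lambda^{(A\cap B)\cup\{i\}}(i)=\lambda^{B\cup\{i\}}(i)\,[1-\alpha_i]+\sum_{k\in B\setminus A}\beta_{k,i}\,\lambda^{B}(k),
\]
with $\alpha_i\in[0,1]$ and $\beta_{k,i}\ge 0$. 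The first summand is non-positive because $B$ is $r$-absorbing and $i\notin B$. Combined with the mirror identity for the $A$-side and the zero-sum constraints $\sum_{l\in A}\lambda^A(l)=\sum_{l\in B}\lambda^B(l)=0$, the assumed strict positivity should force a contradiction, completing the induction. The hardest part will be carrying out this last elimination cleanly, tracking the non-negative coefficients on both sides so as to extract the incompatibility with the zero-sum constraint.
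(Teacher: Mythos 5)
Your strategy is genuinely different from the paper's: the paper deduces Lemma~\ref{abinter} from the dynamical characterization in Lemma~\ref{lez0} (a set is $r$-absorbing iff its face $\partial_A\Sigma$ is invariant under the reflected flow $\zeta^{\lambda,\cdot}$), so that the intersection of two invariant faces is trivially invariant; this is why the lemma appears only after the ORP machinery (Proposition~\ref{cymp}, Lemmas~\ref{lproma} and~\ref{lbsa}) has been set up. Your combinatorial reduction is sound as far as it goes: the induction on $|A|+|B|$, the use of Lemma~\ref{a37} to peel off a single site $b\in B\setminus A$ with $\lambda^B(b)\le 0$, the identity $A\cap(B\setminus\{b\})=A\cap B$, and the one-step trace identity (a special case of Lemma~\ref{lambdaab}) are all correct. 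The representation you write down for $\lambda^{(A\cap B)\cup\{i\}}(i)$ is also derivable, with $\alpha_i=\sum_{k\in B\setminus A}1^B_k(i)\,1^{(A\cap B)\cup\{i\}}_i(k)\in[0,1]$ and $\beta_{k,i}=1^{(A\cap B)\cup\{i\}}_i(k)\ge 0$; it is essentially the computation \eqref{jj3}--\eqref{jj5} in the paper's proof of Lemma~\ref{a37}.

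The gap is that the entire difficulty of the lemma is concentrated in the step you leave as "should force a contradiction," and the tools you list do not suffice to produce it. In the scenario to be excluded ($\lambda^A>0$ on all of $A\setminus B$ and $\lambda^B>0$ on all of $B\setminus A$), your inequality reads $\lambda^{(A\cap B)\cup\{i\}}(i)\le \lambda^{B\cup\{i\}}(i)(1-\alpha_i)+\sum_{k\in B\setminus A}\beta_{k,i}\lambda^B(k)$, whose second term is \emph{positive}, so the bound carries no information; and the zero-sum constraints only yield $\sum_{l\in A\cap B}\lambda^A(l)<0$ and $\sum_{l\in A\cap B}\lambda^B(l)<0$, two conditions that are mutually consistent and do not clash. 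Note that your "bad scenario" says precisely that both $A$ and $B$ are $r$-absorbing sets satisfying property \eqref{proma} with $S=A\cap B$; ruling it out is therefore exactly the uniqueness assertion of Lemma~\ref{min-abs} for that $S$, which the paper itself only proves \emph{after} Lemma~\ref{abinter}, via Lemma~\ref{lbsa} and the continuity of the reflection map. So your reduction replaces the lemma by a statement that is at least as hard, and whose only available proof in the paper runs through the very machinery you are trying to avoid. To complete your route you would need an independent, purely combinatorial proof of that uniqueness (equivalently, of the incompatibility of the two strict-positivity conditions), and no such argument is sketched.
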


This property permits us to define the minimal $r$-absorbing subset containing a given $S$.

\begin{definition}\label{def-min-abs}
For each $S\subseteq V$, we define $\mathcal A(S)$ as the intersection of all $r$-absorbing subsets of $V$ containing $S$.
\end{definition}

It follows from Lemma \ref{a37} that $\mathcal A(S)$ must satisfy \eqref{proma}. Actually, it will be very important to notice that property \eqref{proma} characterizes it.

\begin{lemma}\label{min-abs}
For any nonempty $S \subseteq V$, $\mathcal A(S)$ is the unique $r$-absorbing subset of $V$ satisfying \eqref{proma}.
\end{lemma}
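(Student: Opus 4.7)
The statement has two parts, existence and uniqueness.

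For \emph{existence}, $S \subseteq \mathcal{A}(S)$ is immediate from Definition~\ref{def-min-abs}. I will argue $\lambda^{\mathcal{A}(S)}(i) > 0$ for $i \in \mathcal{A}(S) \setminus S$ by contradiction: if $\lambda^{\mathcal{A}(S)}(i^*) \leq 0$ for some $i^* \in \mathcal{A}(S) \setminus S$, then Lemma~\ref{a37} applied with $B = \mathcal{A}(S)$ and $A = \mathcal{A}(S) \setminus \{i^*\}$ produces an $r$-absorbing set that still contains $S$ (because $i^*\notin S$), contradicting the minimality of $\mathcal{A}(S)$.

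For \emph{uniqueness}, let $A'$ be any $r$-absorbing set satisfying \eqref{proma}. By Definition~\ref{def-min-abs}, $\mathcal{A}(S) \subseteq A'$. Suppose the inclusion were strict and pick $j \in A' \setminus \mathcal{A}(S)$. The key tool I would establish is the trace identity: for $A \subseteq B \subseteq V$ and $i \in A$,
\[
\lambda^A(i) \;=\; \lambda^B(i) \;+\; \sum_{k \in B \setminus A} \lambda^B(k)\, {\rm P}\bigl( T^k_A = T^k_i \bigr),
\]
which I would derive by induction on $|B \setminus A|$ starting from the single-site removal formula \eqref{ator}. Specialising to $A = \mathcal{A}(S) \cup \{j\}$ (included in $A'$) and $i = j$:
\[
\lambda^{\mathcal{A}(S) \cup \{j\}}(j) \;=\; \lambda^{A'}(j) \;+\; \sum_{k \in A' \setminus (\mathcal{A}(S) \cup \{j\})} \lambda^{A'}(k) \, {\rm P}\bigl( T^k_{\mathcal{A}(S) \cup \{j\}} = T^k_j \bigr).
\]
The left-hand side is $\le 0$ because $\mathcal{A}(S)$ is $r$-absorbing. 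On the right-hand side, property \eqref{proma} for $A'$ yields $\lambda^{A'}(j) > 0$ and $\lambda^{A'}(k) > 0$ for every $k$ in the sum, since each such $k$ lies in $A' \setminus \mathcal{A}(S) \subseteq A' \setminus S$, while the hitting probabilities are non-negative. The right-hand side is therefore strictly positive, a contradiction; hence $A' = \mathcal{A}(S)$.

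The \emph{main obstacle} is the trace identity displayed above; once it is in hand the remainder of the argument is a straightforward sign check using Lemma~\ref{a37} and the minimality built into Definition~\ref{def-min-abs}. The identity itself is folklore in the trace-process literature (in the spirit of Section~6.1 of \cite{BL}) and reduces, via iteration of \eqref{ator}, to tracking unweighted flows when removing one site from $B$ at a time.
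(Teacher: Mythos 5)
Your proposal is correct, and the uniqueness half follows a genuinely different route from the paper. The paper's own proof of uniqueness is dynamical: it invokes Lemma~\ref{lbsa}, which shows that for \emph{any} $r$-absorbing $A$ satisfying \eqref{proma} the ORP solution $\zeta^{\lambda,u}$ started from $u$ with $\mathcal S(u)=S$ has support exactly $A$ for small positive times; since that solution is unique, so is $A$. You instead run a purely algebraic sign argument: by minimality $\mathcal A(S)\subseteq A'$, and for $j\in A'\setminus\mathcal A(S)$ the identity $\lambda^{\mathcal A(S)\cup\{j\}}(j)=\lambda^{A'}(j)+\sum_{k\in A'\setminus(\mathcal A(S)\cup\{j\})}\lambda^{A'}(k)\,{\rm P}(T^k_{\mathcal A(S)\cup\{j\}}=T^k_j)$ forces a nonpositive quantity to be strictly positive. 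This is clean and more self-contained for this particular lemma; note that your ``main obstacle'' is not an obstacle at all — the displayed trace identity is exactly Lemma~\ref{lambdaab} of the paper (for $k\in A$ one has $1^A_j(k)=\delta_{jk}$), already proved there via harmonic extensions, so you can cite it rather than re-derive it by induction on $|B\setminus A|$. Your existence argument (remove a site with $\lambda^{\mathcal A(S)}(i^*)\le 0$ via Lemma~\ref{a37} and contradict minimality) is the same as the paper's. Two small points of care: both parts use that $\mathcal A(S)$ is itself $r$-absorbing, which rests on Lemma~\ref{abinter} — in the paper that lemma is proved with the ORP machinery, so the ORP is not entirely excised from the logical chain, only from this proof; and in the degenerate case $\mathcal A(S)\cup\{j\}=A'$ your sum over $k$ is empty, which the argument handles without change.
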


Therefore, the proof of Lemma \ref{lproma} provides an algorithm to compute $\mathcal A(S)$.


\subsection{The fluid limit}\label{FLimit}

In this subsection we construct the set of continuous paths 
\begin{align}
\zeta^u\in C(\R_+,\Sigma), \quad u\in \Sigma,
\end{align}
that arises as the limit points of $(\P^N_u)$. Let us start introducing some ingredients. For each $u\in\Sigma$, denote
\begin{align}\label{ssu}
\mathcal S(u):=\{ i \in V : u(i)>0 \} \quad {and} \quad \mathcal A(u) := \mathcal A\big(\mathcal S(u)\big).
\end{align}
Also, define the set of vectors $\lambda^u\in \R^V$, $u\in \Sigma$ as
\begin{align}
\lambda^u(i) :=
\left\{
\begin{array}{cl}
\lambda^{\mathcal A(u)}(i), & \text{for $i\in \mathcal A(u)$,} \\
0, &  \text{otherwise}.
\end{array}
\right.
\end{align}
Let $T^u$ be the exit time from $\Sigma$ of a path starting at $u$ and with constant velocity $\lambda^{u}$:
\begin{align}
T^u := \min \{ t\ge 0 : u + t \lambda^{u} \not \in \Sigma \}, \quad \text{where} \quad \min \varnothing = \infty.
\end{align}
By Lemma \ref{min-abs}, $\lambda^{u}(j)> 0$, $\forall j\in \mathcal A(u)\setminus \mathcal S(u)$. Hence, in order to compute $T^u$, it suffices to consider the set of coordinates
\begin{align}
\mathcal S_{\downarrow}(u) := \{ i\in \mathcal S(u) : \lambda^{u}(i)<0 \}.
\end{align}
Since $\sum_{i\in \mathcal A(u)} \lambda^{u}(i)=0$ then
\begin{align}\label{tinfi}
\mathcal S_{\downarrow}(u)=\varnothing \; \iff \; \lambda^{u} \;\text{is null} \; \iff \; T^u=\infty.
\end{align}
Furthermore, if $\mathcal S_{\downarrow}(u)$ is nonempty then
\begin{align}\label{mintu}
0 < \min \left\{ - \frac{u(i)}{\lambda^{u}(i)} : i\in \mathcal S_{\downarrow}(u) \right\} = T^u < \infty.
\end{align}

\begin{lemma}\label{zuend}
Given $u\in \Sigma$, if $T^u<\infty$ then
\begin{align}
\mathcal S(v) = \mathcal A(v) \subsetneq \mathcal A(u), \quad \text{where} \quad v:=u + T^u\lambda^u.
\end{align}
\end{lemma}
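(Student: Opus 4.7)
The plan is to describe $v$ coordinate by coordinate, exhibit the set of sites that become empty at time $T^u$, and then invoke Lemma~\ref{a37} with $B=\mathcal A(u)$ to show that the reduced support is already $r$-absorbing.

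\textbf{Step 1: compute the support of $v$.} I would split $V$ into three pieces. For $i\notin \mathcal A(u)$ one has $u(i)=0$ and $\lambda^u(i)=0$ by definition, so $v(i)=0$. For $i\in \mathcal A(u)\setminus \mathcal S(u)$ one has $u(i)=0$ but $\lambda^u(i)=\lambda^{\mathcal A(u)}(i)>0$ (this is exactly the defining property \eqref{proma} of $\mathcal A(u)=\mathcal A(\mathcal S(u))$ via Lemma~\ref{min-abs}), so $v(i)=T^u\lambda^u(i)>0$ using $T^u>0$ from \eqref{mintu}. For $i\in \mathcal S(u)$ we have $v(i)=u(i)+T^u\lambda^u(i)\ge 0$ because $u+T^u\lambda^u\in\Sigma$ by continuity of the straight segment. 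Setting $E:=\{i\in \mathcal S(u):v(i)=0\}$, these three cases yield
\begin{align}
\mathcal S(v)=\mathcal A(u)\setminus E.
\end{align}
Moreover $E$ is nonempty: since $T^u<\infty$, \eqref{tinfi} gives $\mathcal S_\downarrow(u)\ne\varnothing$, and \eqref{mintu} shows that $T^u=-u(i^*)/\lambda^u(i^*)$ for some $i^*\in \mathcal S_\downarrow(u)\subseteq \mathcal S(u)$; that $i^*$ satisfies $v(i^*)=0$ and therefore belongs to $E$. Hence $\mathcal S(v)\subsetneq \mathcal A(u)$.

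\textbf{Step 2: show $\mathcal S(v)$ is $r$-absorbing.} I apply Lemma~\ref{a37} with $B:=\mathcal A(u)$ and $A:=\mathcal S(v)=B\setminus E$. The hypothesis to check is $\lambda^B(i)\le 0$ for every $i\in E$. But if $i\in E$ then $u(i)>0$ and $u(i)+T^u\lambda^{\mathcal A(u)}(i)=0$, hence $\lambda^{\mathcal A(u)}(i)=-u(i)/T^u<0$. So the hypothesis of Lemma~\ref{a37} is satisfied, and $\mathcal S(v)$ is $r$-absorbing.

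\textbf{Step 3: conclude.} Since $\mathcal S(v)$ is itself an $r$-absorbing set containing $\mathcal S(v)$, the minimality of $\mathcal A(v)=\mathcal A(\mathcal S(v))$ (Definition~\ref{def-min-abs}) together with the trivial inclusion $\mathcal S(v)\subseteq \mathcal A(v)$ forces $\mathcal A(v)=\mathcal S(v)$. Combining with the strict inclusion from Step~1 gives $\mathcal A(v)=\mathcal S(v)\subsetneq \mathcal A(u)$, which is the claim.

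The only mildly delicate step is Step~2, where one must correctly identify the set $E$ of newly emptied sites and check that the sign condition required by Lemma~\ref{a37} is precisely what the geometric definition of $T^u$ in \eqref{mintu} guarantees; everything else is a direct bookkeeping of the components of $v=u+T^u\lambda^u$.
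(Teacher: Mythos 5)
Your proposal is correct and follows essentially the same route as the paper: identify the coordinates of $\mathcal A(u)$ that vanish at time $T^u$, observe that for each such coordinate $\lambda^{\mathcal A(u)}(j)=-u(j)/T^u\le 0$, invoke Lemma~\ref{a37} with $B=\mathcal A(u)$ to get that $\mathcal S(v)$ is $r$-absorbing, and use the argmin in \eqref{mintu} to obtain strictness of the inclusion. Your Steps 1 and 3 merely make explicit the bookkeeping ($\mathcal S(v)=\mathcal A(u)\setminus E$ and the identification $\mathcal A(v)=\mathcal S(v)$ by minimality) that the paper leaves implicit.
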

\begin{proof}
Since $\lambda^u$ vanishes outside $\mathcal A(u)$, then $\mathcal S(v) \subseteq \mathcal A(u)$. Now, for any $j\in \mathcal A(u)\setminus \mathcal S(v)$ we have
\begin{align}
0 = v(j) = u(j) + T^u \lambda^{u}(j) \quad \implies \quad \lambda^{\mathcal A(u)}(j) = -\frac{u(j)}{T^u} \le 0. \quad \text{(Recall that $T^u>0$.)}
\end{align}
Since $\mathcal A(u)$ is $r$-absorbing, we may apply Lemma \ref{a37} to conclude that $\mathcal S(v)$ is $r$-absorbing. On the other hand, since $T^u<\infty$, then $S_{\downarrow}(u)$ is nonempty. Thus, in virtue of \eqref{mintu}, by taking any
\begin{align}
{\boldsymbol i} \in \text{argmin}\left\{ - \frac{u(i)}{\lambda^{u}(i)} : i \in S_{\downarrow}(u) \right\}
\end{align}
we have $v({\boldsymbol i})=0$, that is ${\boldsymbol i}\not \in \mathcal S(v)$. Since ${\boldsymbol i}\in S_{\downarrow}(u)\subseteq \mathcal A(u)$, we are done.
\end{proof}

Now, for each $u\in \Sigma$, we compute a finite sequence of pairs
\begin{align}\label{fsop}
(T_n, v_n)\in [0,\infty)\times\Sigma , \quad n=0,1,\dots,f,
\end{align}
as follows. Initialize with $(T_0,v_0)=(0,u)$. Suppose $(T_n,v_n)$ has already been defined for $n=0,1,\dots k$. If $\lambda^{v_k}$ is null, we stop and $k$ equals $f$ in \eqref{fsop}. Otherwise, we compute
\begin{align}
\mathcal S_{\downarrow}(v_k)=\{ i\in \mathcal S(v_k) : \lambda^{v_k}(i)<0 \} \quad \text{and} \quad T^{v_k}=\min \left\{ -\frac{v_k(i)}{\lambda^{v_k}(i)} : i\in S_{\downarrow}(v_k) \right\}.
\end{align}
By \eqref{tinfi} and \eqref{mintu}, we have $0<T^{v_k}<\infty$ and we define $(v_{k+1}, T_{k+1})$ as
\begin{align}
v_{k+1}:=v_{k} + T^{v_{k}} \lambda^{v_k} \quad \text{and} \quad T_{k+1}:=T_k+T^{v_k}.
\end{align}
We now repeat the same procedure. By Lemma \ref{zuend}, the cardinality of $\mathcal A(v_{n})$ is strictly decreasing and therefore, this process must stop. 

We finally construct $\zeta^u$ as a concatenation of the rectilinear paths determined by $(T_n,v_n)$:
\begin{align}\label{zetaudef}
\zeta^u_t = \left\{
\begin{array}{cl}
v_k + (t- T_k)\lambda^{v_k}, & \text{if $T_k\le t < T_{k+1}$ and $k < f$}, \\
v_f,& \text{for all $t\ge T_f$}.
\end{array}
\right.
\end{align}
It is not difficult to verify from its construction that this family of paths satisfies the following property:
\begin{align}\label{fpr}
\text{For all $s,t\ge 0$ and $u\in \Sigma$, we have $\zeta^{v}_t = \zeta^{u}_{s+t}$, where $v=\zeta^u_s$}.
\end{align}
We may now state our main result which establishes a fluid limit for the zero-range process.

\begin{theorem}\label{c2}
Let $u_N\in \Sigma_N$, $N\ge 1$, be a sequence converging to some $u\in \Sigma$. Under assumptions \eqref{rate-f}-\eqref{rate-fbis}, $\P^N_{u_N}$ converges weakly to the Dirac measure $\delta_{\zeta^u}$, where $\zeta^u\in C(\R_+,\Sigma)$ is defined in \eqref{zetaudef}.
\end{theorem}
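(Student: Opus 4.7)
My plan follows the scheme outlined in the introduction and \S\ref{charac}: extract subsequential weak limits via tightness, show they satisfy an ODE with drift $\lambda^{\mathcal{A}(\zeta_t)}$ and instantaneously exit non-absorbing boundaries, and invoke uniqueness of the resulting characterization to conclude convergence to the deterministic path $\zeta^u$. By Proposition \ref{tight2}, $(\P^N_{u_N})$ is sequentially compact with every limit supported on $C(\R_+,\Sigma)$, so it suffices to identify an arbitrary subsequential weak limit $\Q$. To this end, I would apply Dynkin's formula to the coordinate functions $f_j(\eta) = \eta(j)/N$ for $j\in V$. The quadratic variation of the resulting martingale is of order $1/N$ and thus vanishes, leaving the integrated drift
\[
\int_0^t \sum_{i\in V}\big[g(\eta_{sN}(i))\,r(i,j) - g(\eta_{sN}(j))\,r(j,i)\big]\,ds
\]
to be identified. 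Using $g(n)\to 1$, macroscopically loaded sites contribute with effective rate $1$, while sites carrying only $o(N)$ particles form a fast subsystem whose time-averaged contribution, by formula \eqref{trace-rates}, is captured precisely by the trace rates on $\mathcal{S}(\zeta_t)$. This would give $\Q$-a.s.\ the drift $\lambda^{\mathcal{S}(\zeta_t)}(j)\,dt$ at all times $t$ for which $\mathcal{S}(\zeta_t)=\mathcal{A}(\zeta_t)$, i.e.\ on absorbing faces.

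The main obstacle is the case of non-absorbing boundaries, where $\mathcal{S}(\zeta_t)\subsetneq \mathcal{A}(\zeta_t)$. There the previous step does not prescribe the correct drift, because by Lemma \ref{min-abs} one has $\lambda^{\mathcal{A}(\zeta_t)}(j)>0$ for every $j\in \mathcal{A}(\zeta_t)\setminus \mathcal{S}(\zeta_t)$, so the true trajectory must instantaneously charge these sites even though they start with microscopic mass. To establish this instantaneous exit I would employ the coupling argument announced in \S\ref{coupling}: since $g(n)\ge 1$, the flow of particles out of any crowded site in the zero-range process dominates the corresponding flow in a Jackson network with constant rate $1$; applying the Chen--Mandelbaum fluid limit for this dominating open queueing network produces a strictly positive lower bound, uniform in $N$, for the mass accumulated at any $j\in \mathcal{A}(\zeta_t)\setminus \mathcal{S}(\zeta_t)$ after any arbitrarily short time. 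Iterating this comparison across the algorithm of Lemma \ref{lproma}, which grows $\mathcal{S}(\zeta_t)$ one layer at a time into $\mathcal{A}(\zeta_t)$, extends the conclusion to all non-absorbing boundaries simultaneously.

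Combining the two steps, $\Q$ is supported on trajectories that solve the ODE-with-exit characterization of Definition \ref{propab}. By the uniqueness statement of \S\ref{charac}, this problem admits $\zeta^u$ as its unique solution, the piecewise linear construction of \eqref{zetaudef} terminating in finitely many pieces since $|\mathcal{A}(v_n)|$ strictly decreases (Lemma \ref{zuend}). The explicit construction also depends continuously on the initial datum, so $\zeta^{u_N}\to \zeta^u$, and every subsequential limit $\Q$ must equal $\delta_{\zeta^u}$. Tightness then upgrades subsequential convergence to the full convergence $\P^N_{u_N}\Rightarrow \delta_{\zeta^u}$, proving Theorem \ref{c2}.
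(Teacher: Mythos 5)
Your overall architecture (tightness, identification of subsequential limits as solutions of a characterizing problem, coupling with queues to handle non-absorbing boundaries, uniqueness) matches the paper's. However, there is a genuine gap in your drift-identification step. You apply Dynkin's formula to the plain coordinate functions $f_j$ and then assert that the sites carrying only $o(N)$ particles ``form a fast subsystem whose time-averaged contribution, by formula \eqref{trace-rates}, is captured precisely by the trace rates on $\mathcal S(\zeta_t)$.'' This is exactly the hard local-equilibrium/averaging statement that the paper is built to avoid: for coordinate functions the generator $L_Nf_j(u)=\sum_i\big[g(Nu(i))r(i,j)-g(Nu(j))r(j,i)\big]$ does not converge to anything tractable near the boundary, because $g(\eta(k))$ for microscopically occupied $k$ fluctuates on the fast time scale and one would have to prove that its time average equals the escape probability entering \eqref{trace-rates}, uniformly over mesoscopic occupancies as well. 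You supply no argument for this. The paper circumvents it entirely by restricting condition {\bf(A)} to the test-function class $\mathcal D_V$ (functions with ${\boldsymbol v}_i(f)=0$ on $\{u(i)=0\}$), for which $L_Nf\to\lambda(f)$ uniformly (Lemma \ref{aux}): the problematic factor $g(Nu(j))-1$ is always multiplied by ${\boldsymbol v}_j(f)(u)$, which is small precisely where $g(Nu(j))$ is uncontrolled. The trace rates then emerge downstream, deterministically, from condition {\bf(A)} via harmonic extensions (Lemmas \ref{viaw}, \ref{linear}, \ref{fp}), not from a stochastic averaging argument. A related bookkeeping problem follows: the uniqueness result you invoke (Proposition \ref{promain}) is stated for the $(\lambda,\mathcal D_V)$-problem, i.e.\ for condition {\bf(A)} with those test functions; your pointwise ``drift $=\lambda^{\mathcal S(\zeta_t)}$ on absorbing faces'' characterization is a different (and not obviously well-posed) condition, so you cannot simply cite that uniqueness statement without first showing your condition implies {\bf(A)}, or proving uniqueness for your own formulation.

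On the boundary-exit step you correctly identify the coupling with constant-rate queues and the Chen--Mandelbaum fluid limit, which is the paper's Lemmas \ref{queue} and \ref{now-eta}. Two caveats: the domination $g(n)\ge1$ works in your favour only for the \emph{source} sites in $S$; at a target site $j\in\mathcal A(S)\setminus S$ it makes departures \emph{faster} than in the comparison queue, and the paper needs the Poisson-band construction \eqref{pri}--\eqref{quint} and the stopping time $\tau^N$ to control this excess outflow --- your sketch does not address it. Also, the paper does not iterate ``one layer at a time'' through the algorithm of Lemma \ref{lproma}; Lemma \ref{now-eta} charges all of $\mathcal A(S)\setminus S$ simultaneously via the explicit ORP solution started from $\theta_S$. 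These are fixable, but as written the proposal leaves the two central technical devices of the proof (the class $\mathcal D_V$ with Lemma \ref{extens}, and the quantitative queue comparison) unproved.
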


Hence, when time and occupation levels are rescaled linearly, the discrete, stochastic flow of individual particles converges to a deterministic limit that evolves in the continuum. When the jump process rates are constant, $g(n)=\one_{\{n\ge 1\}}$, the zero-range process reduces to a closed Jackson network on the set $V$, and the result was first derived by Chen and Mandelbaum in \cite{chen-mandelbaum-fluid}.

\subsection{Absorbing faces and bottlenecks} Let us remark some important features of the fluid limit. For each $A\subseteq V$, let us denote by $\partial_A\Sigma$ the $A$-face of $\Sigma$:
\begin{align}
\partial_A \Sigma :=\{\zeta\in\Sigma: \sum_{j\in A} \zeta(j)=1 \}.
\end{align}
We start justifying our terminology of $r$-absorbing sets. 

\begin{remark}\label{rpab}
If $A$ is $r$-absorbing then $\partial_A\Sigma$ turns out to be an absorbing face in the following sense: for any $u\in \Sigma$ and $s\ge 0$,
\begin{align}\label{pab}
\zeta^u_s\in \partial_A \Sigma \quad \implies \quad \zeta^u_t\in \partial_A \Sigma,\,\forall t\ge s.
\end{align}
\end{remark}
Indeed, by Lemma \ref{zuend}, we have
\begin{align}\label{obs1}
\mathcal S(\zeta^u_t)=\mathcal A(\zeta^u_t)\subseteq \mathcal A(\zeta^u_s), \quad \text{whenever $0\le s < t$.}
\end{align}
In particular,
\begin{align}\label{abst}
\mathcal S(\zeta^u_t) \; \text{is $r$-absorbing}, \quad \forall t>0.
\end{align}
Then, $\zeta^u_s\in \partial_A\Sigma$ implies that $\mathcal A(\zeta^u_s)\subseteq A$ and \eqref{pab} follows from \eqref{obs1}. 

Notice now that, even if $B$ is not $r$-absorbing, it could contain an $r$-absorbing set $A$. In that case, if $\mathcal S(u)\subseteq A$ then $\zeta^u_t\in \partial_B\Sigma$, $\forall t\ge 0$. Nevertheless, $\lambda^{\mathcal A(B)}(j)>0$, for all $j\in \mathcal A(B)\setminus B$, and so
\begin{remark}
If $B$ is not $r$-absorbing and $\mathcal S(u)=B$ then $\zeta^u$ exits from $\partial_B \Sigma$ instantly:
\begin{align}
\inf\{ t> 0 : \zeta^u_t\not\in \partial_B\Sigma \} = 0.
\end{align}
\end{remark}
We finish by providing a characterization of those faces where the fluid limit equilibrates. Let $\mu(i)$, $i\in V$ be the invariant distribution associated to $r$, i.e.
\begin{align}
\sum_{i\in V} \big( \mu(i) r(i,j) - \mu(j)r(j,i) \big) = 0, \quad \forall i\in V.
\end{align}
Let $\mathcal M$ stand for the set of sites with maximal $\mu$-measure:
\begin{align}
\mathcal M:=\{i\in V,\, \mu(i)=\max_{j\in V}\mu(j)\}.
\end{align}
\begin{proposition}\label{absconf}
Let $A$ be a nonempty subset of $V$. The following assertions are equivalent.
\begin{itemize}
\item[$a)$] $u\in \partial_A\Sigma$ $\implies$ $\zeta^u_t=u$, $\forall t\ge 0$.
\item[$b)$] $A$ is $r$-absorbing and $\lambda^A$ is null.
\item[$c)$] $A\subseteq \mathcal M$.
\end{itemize}
\end{proposition}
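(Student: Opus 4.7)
The plan is to prove the three-way equivalence by introducing one algebraic key lemma, then establishing (b)$\Leftrightarrow$(c) via the invariance equation for $\mu$, and finally (b)$\Leftrightarrow$(a) by direct inspection of \eqref{zetaudef}. The \emph{key lemma} is that $\lambda^A=0$ if and only if $\mu$ is constant on $A$. This rests on the classical fact (see \cite{BL}) that $\mu|_A$ is invariant for the trace rates $r^A$, i.e.,
\begin{align*}
\sum_{j\in A}\mu(j)\,r^A(j,i)\;=\;\mu(i)\sum_{j\in A}r^A(i,j),\qquad i\in A.
\end{align*}
If $\mu|_A\equiv c$ this collapses to $\sum_j r^A(j,i)=\sum_j r^A(i,j)$, namely $\lambda^A(i)=0$. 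Conversely, $\lambda^A=0$ expresses that the uniform measure on $A$ is invariant for the \emph{irreducible} trace $r^A$, and uniqueness of the invariant measure forces $\mu|_A$ to be constant.

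For \emph{(b) $\Leftrightarrow$ (c)}, I will write the invariance at $j\in V\setminus A$ for the trace on $A\cup\{j\}$:
\begin{align*}
\mu(j)\sum_{i\in A}r^{A\cup\{j\}}(j,i)\;=\;\sum_{i\in A}\mu(i)\,r^{A\cup\{j\}}(i,j).
\end{align*}
Assuming (c) with $c:=\max_V\mu$, one has $\mu|_A\equiv c$ and $\mu(j)\le c$, so the right side equals $c\sum_i r^{A\cup\{j\}}(i,j)$ while the left is at most $c\sum_i r^{A\cup\{j\}}(j,i)$, yielding $\lambda^{A\cup\{j\}}(j)\le 0$; thus $A$ is $r$-absorbing, and combined with the key lemma this gives (b). Conversely, assume (b); the key lemma gives $\mu|_A\equiv c$ for some $c$, and if some $j\in V\setminus A$ had $\mu(j)>c$, the same identity together with the positivity $\sum_i r^{A\cup\{j\}}(j,i)>0$ (from irreducibility of $r^{A\cup\{j\}}$) would force $\lambda^{A\cup\{j\}}(j)>0$, contradicting the $r$-absorption of $A$; hence $c=\max_V\mu$ and (c) holds.

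For \emph{(b) $\Leftrightarrow$ (a)}, assume (b) and let $u\in\partial_A\Sigma$. Since $A$ is $r$-absorbing and contains $\mathcal S(u)$, minimality of $\mathcal A(u)$ gives $\mathcal A(u)\subseteq A$; by (b)$\Leftrightarrow$(c) and the key lemma, $\mu$ is constant on $A$, hence on $\mathcal A(u)$, so $\lambda^{\mathcal A(u)}=0$ and $\lambda^u\equiv 0$. The recursive construction \eqref{zetaudef} then stops at $f=0$, yielding $\zeta^u_t=u$ for all $t\ge 0$. For the converse, test (a) at $u^*:=|A|^{-1}\one_A\in\partial_A\Sigma$, for which $\mathcal S(u^*)=A$. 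If $A$ were not $r$-absorbing, then $\mathcal A(u^*)=\mathcal A(A)\supsetneq A$ and Lemma \ref{min-abs} would produce $j\in\mathcal A(u^*)\setminus A$ with $\lambda^{u^*}(j)>0$; the construction \eqref{zetaudef} would then give $\zeta^{u^*}_t(j)>0$ for small $t>0$, contradicting $\zeta^{u^*}\equiv u^*$. Thus $A$ is $r$-absorbing, and the characterization in Lemma \ref{min-abs} forces $\mathcal A(u^*)=A$; if additionally $\lambda^A\ne 0$, one would have $\zeta^{u^*}_t=u^*+t\lambda^A\ne u^*$ for small $t>0$, again contradicting (a). Hence $\lambda^A=0$ and (b) holds. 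The only substantive input is the classical invariance of $\mu|_A$ under $r^A$; granted this, every implication is algebraic, the delicate point being the use of irreducibility of $r^{A\cup\{j\}}$ to upgrade $\mu(j)>c$ to the strict inequality $\lambda^{A\cup\{j\}}(j)>0$ in (b)$\Rightarrow$(c).
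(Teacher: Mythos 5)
Your proof is correct, and its backbone coincides with the paper's: your key lemma is exactly Lemma \ref{stoptech}, and your treatment of $b)\Leftrightarrow c)$ rewrites the same invariance identity \eqref{stop1} at the site $j$ for the trace on $A\cup\{j\}$, using irreducibility of $r^{A\cup\{j\}}$ at the same point to upgrade to a strict inequality. There are two small but genuine divergences. First, for the ``only if'' direction of the key lemma you invoke uniqueness of the invariant measure of the irreducible trace $r^A$ (the condition $\lambda^A=0$ says the uniform measure is $r^A$-invariant, hence proportional to $\mu|_A$), whereas the paper argues directly from \eqref{stop1} by examining the sites where $\mu$ is maximal on $B$; both are clean. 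Second, and more substantively, for $a)\Rightarrow$ ``$A$ is $r$-absorbing'' the paper invokes Lemma \ref{lez0}, whose proof runs through the ORP and the continuity of the reflection map, while you test $a)$ at the uniform configuration $u^*$ on $A$ and derive a contradiction from Lemma \ref{min-abs} together with the explicit construction \eqref{zetaudef}; your route is more elementary and keeps the proposition essentially independent of the ORP machinery. All steps check out, including the two facts you rely on: that $\mu|_A$ is $r^A$-invariant, and that $\sum_{i\in A}r^{A\cup\{j\}}(j,i)>0$ by irreducibility of the trace.
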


In the terminology of Jackson networks, sites in $\mathcal M$ are called \emph{bottlenecks}. The fact that the fluid limit of closed networks equilibrates at a finite time, after which all nonbottlenecks remain empty, was already proved by \cite{chen-mandelbaum-fluid}, see also \S 7.10 in \cite{chen-yao}.

\subsection{Oblique reflection problem}\label{orp0}

In this subsection, we relate the fluid limit with the so-called oblique reflection problem. Let $p=\big( p(i,j)\big)_{i,j\in V}$ be an \emph{irreducible} set of transition probabilities so that $p(i,i)=0$, for all $i\in V$. For each $i\in V$, let $1_{\{i\}}\in \R^V$ stand for the indicator of $i$. Define the set of vectors
\begin{align}\label{dor}
{\boldsymbol w}_i := \sum_{j\in V} p(i,j)\big( 1_{\{i \}} - 1_{\{j \}} \big), \quad i\in V.
\end{align}
Denote by $D(\R_+,\R^V)$ the space of $\R^V$-valued right continuous functions with left limits and let $\R_+^V$ stand for the nonnegative orthant $\{ u \in \R^V : u(i)\ge 0, \;\forall i\in V \}$.
\begin{definition}\label{c5}
Given $\xi\in D(\R_+,\R^V)$, a pair $\zeta, \rho\in D(\R_+,\R^V)$ is said to solve the oblique reflection problem (ORP) if
\begin{itemize}
\item[i)] For each $i\in V$, the path $\rho_{t}(i)$, $t\ge 0$, is nondecreasing and
\begin{align}\label{zdr}
\rho_t(i) = \int_{0}^{t} 1_{\{\zeta_s(i)=0\}} d \rho_s(i), \quad \forall t\ge 0.
\end{align}

\item[ii)] For all $t\ge 0$, we have $\zeta_t = \xi_t + \sum_{i\in V} \rho_t(i) {\boldsymbol w}_i$ and $\zeta_t \in \R_+^V$.

\end{itemize}
\end{definition}
Roughly speaking, solving the ORP amounts to finding a constrained version $\zeta$ of an input path $\xi$ that is restricted to live in $\R_+^{V}$, so that, when $\zeta$ hits a face $\{u\in \R^V_{+} : u(i)=0 \}$, threatening to go negative, increments of $\rho(i)$, the $i$-th coordinate of the so-called regulator $\rho$, push $\zeta$ along the direction ${\boldsymbol w}_i$. Condition \eqref{zdr} restricts each coordinate $\rho(i)$ to increase only at times $t\ge 0$ when $\zeta_t(i)=0$. A very interesting interpretation of the solution $(\zeta,\rho)$ as a temporal evolution of a Leontief economy can be found in \cite{chen-mandelbaum}.

Notice that the ORP is determined by the irreducible set of transition probabilities $p$. In addition, let us fix a vector $\lambda\in \R^V$ satisfying
\begin{align}\label{hl}
\sum_{i\in V}\lambda(i)=0.
\end{align}
The following result is an immediate consequence of Theorem 2.5 in \cite{chen-mandelbaum} and we state it here for future reference.

\begin{proposition}\label{cymp}
For each $u\in \Sigma$, there exists a unique $(\zeta^{\lambda,u},\rho^{\lambda,u})$ solving the ORP with input
\begin{align}
\xi^u_t = u + t\lambda, \quad t\ge 0.
\end{align}
Furthermore, $\zeta^{\lambda,u}\in C(\R_+,\Sigma)$ and the map $\Gamma:\Sigma \to C(\R_+,\Sigma)$ given by $\Gamma(u)=\zeta^{\lambda,u}$, is continuous when $C(\R_+,\Sigma)$ is endowed with the topology of uniform convergence on compact subsets of $\R_+$.
\end{proposition}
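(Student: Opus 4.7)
Since the proposition is already attributed to Theorem 2.5 of \cite{chen-mandelbaum}, the plan is to cast the ORP in a form covered there and translate the conclusions. First, I would write out the reflection matrix explicitly: arranging the vectors $\boldsymbol w_i$ from \eqref{dor} as columns, a direct computation yields $\boldsymbol w_i(k)=\delta_{ik}-p(i,k)$, so the reflection matrix is $R=I-P^\top$ with $P=\big(p(i,j)\big)_{i,j\in V}$ the stochastic, irreducible routing matrix. This $R$ is Leontief: strictly positive diagonal and nonpositive off-diagonal. Chen-Mandelbaum's theorem then guarantees, for any càdlàg input on $\R^V$, existence and uniqueness of a pair $(\zeta,\rho)$ satisfying Definition \ref{c5}, along with a Lipschitz bound for the induced Skorohod map in the uniform norm on compact time intervals. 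Plugging in the specific affine input $\xi^u_t=u+t\lambda$ yields existence, uniqueness, and the continuity of $u\mapsto \zeta^{\lambda,u}$ from $\Sigma$ into $C(\R_+,\R^V)$ for uniform convergence on compacts.

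The remaining task is to upgrade the state space from $\R^V_+$ to $\Sigma$, which also gives continuity of the paths. Here the crucial observation is that each reflection direction is tangent to the affine hyperplane $H=\{v\in\R^V:\sum_j v(j)=1\}$: summing \eqref{dor} over coordinates gives $\sum_k \boldsymbol w_i(k)=1-\sum_{k\neq i}p(i,k)=0$ because $p(i,i)=0$ and $p$ is stochastic. Since $\xi^u_0=u\in\Sigma\subset H$ and $\lambda$ is tangent to $H$ by \eqref{hl}, we have $\xi^u_t\in H$ for all $t\ge 0$, and the identity $\zeta_t=\xi^u_t+\sum_i \rho_t(i)\boldsymbol w_i$ from Definition \ref{c5}(ii) forces $\zeta_t\in H$. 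Combined with $\zeta_t\in\R^V_+$, this gives $\zeta_t\in\Sigma$. In particular each coordinate is bounded by $1$, so by Definition \ref{c5}(i) each regulator is a bounded nondecreasing path, and the Lipschitz property of the Skorohod map together with continuity of $\xi^u$ give $\zeta^{\lambda,u}\in C(\R_+,\Sigma)$.

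The main obstacle I anticipate, and the point that would require the most care in a self-contained treatment, is that the columns of $R=I-P^\top$ are linearly dependent in the closed-network case: their coordinate sums all vanish. Chen-Mandelbaum's Skorohod problem in its cleanest formulation assumes a completely-$\mathcal S$ (and hence nonsingular) reflection matrix. The natural workaround is to exploit the forced invariance $\zeta_t\in H$: fix any $i_0\in V$, use $\zeta_t(i_0)=1-\sum_{i\neq i_0}\zeta_t(i)$ to eliminate that coordinate, and recast the ORP on the $(|V|-1)$-dimensional polytope obtained by parametrizing $\Sigma$ via $\big(\zeta(i)\big)_{i\neq i_0}$. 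In this reduced description, the irreducibility of $p$ makes the $(|V|-1)\times(|V|-1)$ reduced reflection matrix completely-$\mathcal S$, the Chen-Mandelbaum theorem applies verbatim, and both the uniqueness statement and the Lipschitz bound transfer back to the original ORP on $\Sigma$, yielding the continuity of $\Gamma$.
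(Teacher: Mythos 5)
The paper offers no proof of Proposition \ref{cymp} beyond the citation: it is stated as an immediate consequence of Theorem 2.5 in \cite{chen-mandelbaum}. Your overall strategy --- identify the reflection matrix as $R=I-P^{\top}$, invoke the cited existence/uniqueness/Lipschitz result, and observe that the vanishing column sums of $R$ together with \eqref{hl} force the solution into the hyperplane $\sum_j v(j)=1$ and hence into $\Sigma$ --- is therefore the same route, with the verification details usefully spelled out. Your computations of $\boldsymbol w_i(k)=\delta_{ik}-p(i,k)$, of the vanishing column sums, and of the resulting invariance of $\Sigma$ are all correct.

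One caveat concerns your last paragraph. The reference \cite{chen-mandelbaum} is written precisely for the Leontief/closed-network setting in which $R=I-P^{\top}$ is singular ($P$ stochastic and irreducible), so no dimension reduction is needed to invoke its Theorem 2.5; your worry is legitimate for the standard orthant (completely-$\mathcal S$) theory, but the cited theorem is the closed-network statement. More importantly, the workaround you sketch does not work as stated: after eliminating the coordinate $i_0$ via $\zeta(i_0)=1-\sum_{i\neq i_0}\zeta(i)$, the state space becomes $\{v\in\R_+^{V\setminus\{i_0\}}:\sum_{i\neq i_0} v(i)\le 1\}$, a simplex with $|V|$ faces in dimension $|V|-1$; the regulator $\rho(i_0)$ is still present and acts on the face $\sum_{i\neq i_0}v(i)=1$, so the reduced system is a polyhedral Skorokhod problem, not an orthant one, and the orthant version of the Chen--Mandelbaum (or Harrison--Reiman) theorem does not apply ``verbatim'' to it. If you wanted a self-contained treatment of the singular case you would need either to quote the closed-network result directly or to use a genuinely polyhedral reflection theorem; as written, that final step is a gap, though it occurs in a detour the paper itself does not take.
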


We claim that each $\zeta^{\lambda,u}$, $u\in\Sigma$, coincides with the fluid limit $\zeta^{u}$ for a suitably chosen $r$.

\begin{proposition}\label{porp}
If $r=(r(i,j))_{i,j\in V}$ is related to $p$ and $\lambda$ by \eqref{probr} and 
\begin{align}\label{lww}
\lambda = -\sum_{i\in V} r(i){\boldsymbol w}_i,
\end{align}
then $\zeta^{\lambda,u} = \zeta^u$, for all $u\in \Sigma$, where $\zeta^u$ is the path constructed in \eqref{zetaudef} using $r$.
\end{proposition}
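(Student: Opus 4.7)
By the uniqueness part of Proposition \ref{cymp}, it suffices to exhibit a regulator $\rho^u\in D(\R_+,\R^V)$ such that $(\zeta^u,\rho^u)$ solves the ORP with input $\xi^u_t=u+t\lambda$; uniqueness then forces $\zeta^u=\zeta^{\lambda,u}$. I will construct $\rho^u$ continuous, piecewise linear and nondecreasing. On each slope interval $[T_k,T_{k+1})$ of $\zeta^u$ set $A=\mathcal A(v_k)$ and $B=V\setminus A$; the restricted matrix $P_B=\big(p(i,j)\big)_{i,j\in B}$ is strictly substochastic (since $p$ is irreducible and $B\neq V$), so the linear system $(I-P_B^T)x=-\lambda|_B$ has a unique solution $x^{(k)}\in\R^B$, which I extend by $0$ on $A$. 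Then define $\rho^u_t(i)=\sum_{m<k}(T_{m+1}-T_m)x^{(m)}_i+(t-T_k)x^{(k)}_i$ for $t\in[T_k,T_{k+1})$.

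The ORP representation $\zeta^u_t-u-t\lambda=\sum_i\rho^u_t(i){\boldsymbol w}_i$ required by part $(ii)$ of Definition \ref{c5} amounts, on $[T_k,T_{k+1})$, to the identity $\lambda^{v_k}-\lambda=\sum_{j\in B}x^{(k)}_j{\boldsymbol w}_j$. Its $B$-components are precisely the defining equations of $x^{(k)}$, while its $A$-components reduce to the consistency relation $\lambda(i)-\lambda^A(i)=\sum_{j\in B}p(j,i)x^{(k)}_j$ for $i\in A$. To verify the latter, I will expand $x^{(k)}$ in Neumann series $\sum_{n\ge 0}(P_B^T)^n(-\lambda|_B)$, recognize $\sum_{j\in B}G_B(l,j)p(j,i)$ as the hitting distribution $\P(T^l_A=T^l_i)$ on $A$, and match the resulting expression with the trace formula \eqref{trace-rates}. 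Part $(i)$ of Definition \ref{c5} will then follow from $\rho^u$ being nondecreasing, because $\rho^u_t(i)$ changes only for $i\in B$, and on $[T_k,T_{k+1})$ one has $\zeta^u_t(i)=v_k(i)+(t-T_k)\lambda^{v_k}(i)=0$ for such $i$.

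The crux of the proof is the nonnegativity $x^{(k)}_j\ge 0$ for every $j\in B$. By construction $A=\mathcal A(v_k)$ is $r$-absorbing (direct at $k=0$ and via Lemma \ref{zuend} when $k\ge 1$), so $\lambda^{A\cup\{j\}}(j)\le 0$ for every $j\in B$. I will establish the identity
\begin{equation}\label{key-plan}
(1-\pi_j)\,x^{(k)}_j\;=\;-\lambda^{A\cup\{j\}}(j),\qquad j\in B,
\end{equation}
where $\pi_j:=\sum_{i\in B\setminus\{j\}}p(j,i)\,\P(T^i_{A\cup\{j\}}=T^i_j)\in[0,1)$ is the probability that the chain started at $j$ returns to $j$ before hitting $A$. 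Since $1-\pi_j>0$, \eqref{key-plan} forces $x^{(k)}_j\ge 0$. To derive \eqref{key-plan}, I first rewrite $x^{(k)}_j=r(j)-\sum_{i\in B}\alpha(i)G_B(i,j)$, with $\alpha(i):=\sum_{l\in A}r(l,i)$, a formula obtained by applying the first-step decomposition $G_B(i,j)=\delta_{ij}+\sum_{l\in B}p(i,l)G_B(l,j)$ inside the Neumann series and collecting terms. Then, using the strong-Markov decomposition $G_B(i,j)=h_i(j)/(1-\pi_j)$ with $h_i(j):=\P(\text{visit $j$ before $A$, starting from $i$})$, and expanding $\lambda^{A\cup\{j\}}(j)$ via \eqref{lambdaA} and \eqref{trace-rates} applied to $A\cup\{j\}$, both sides of \eqref{key-plan} can be matched termwise. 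The main difficulty lies in recognizing this identity, which reveals that the $r$-absorbing condition of Definition \ref{rabsor} is \emph{exactly} the condition needed for nonnegativity of the regulator rates; once \eqref{key-plan} is established, Proposition \ref{cymp} yields $\zeta^u=\zeta^{\lambda,u}$.
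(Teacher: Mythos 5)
Your argument is correct, but it is not the route the paper takes. The paper never touches the regulator: it first shows (Lemma \ref{zlu}, via the chain rule for functions of bounded variation) that the reflected path $\zeta^{\lambda,u}$ satisfies the test-function identity {\bf (A)}, deduces from Remark \ref{rem10} that $\zeta^{\lambda,u}=\zeta^u$ whenever $u\in\Sigma_{abs}$, and then passes to general $u$ by writing $u_n=\zeta^u_{s_n}\in\Sigma_{abs}$ for $s_n\downarrow 0$ and invoking the flow property \eqref{fpr} together with the continuity of the map $\Gamma$ from Proposition \ref{cymp}. You instead exhibit the regulator explicitly and appeal only to the uniqueness half of Proposition \ref{cymp}. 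Your key computations do check out: on each slope interval the $B$-equations $(I-P_B^T)x^{(k)}=-\lambda|_B$ are solvable because the spectral radius of $P_B$ is strictly less than one (irreducibility of $p$ plus $A\neq\varnothing$; note that ``strictly substochastic'' is a slight misnomer, since rows of $P_B$ need not all sum to less than one); the $A$-consistency relation reduces to $\lambda^A(m)=\langle\lambda,1^A_m\rangle$, which is exactly \eqref{liaw}; and your identity $(1-\pi_j)\,x^{(k)}_j=-\lambda^{A\cup\{j\}}(j)$ follows from $G_B(l,j)=\P\big(T^l_{A\cup\{j\}}=T^l_j\big)/(1-\pi_j)$ combined with Lemma \ref{lambdaab} applied to the pair $(A\cup\{j\},V)$, so nonnegativity of the reflection rates is literally the statement that $A$ is $r$-absorbing — and $A=\mathcal A(v_k)$ is indeed $r$-absorbing at every stage, by construction at $k=0$ and by Lemma \ref{zuend} for $k\ge 1$. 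What your approach buys is a closed form for the regulator $\rho^{\lambda,u}$ and a transparent explanation of why Definition \ref{rabsor} is precisely the right notion; what the paper's approach buys is economy, since Lemma \ref{zlu} and Remark \ref{pud} are needed anyway to establish Lemmas \ref{lbsa}--\ref{min-abs}, on which the very construction \eqref{zetaudef} of $\zeta^u$ for general $u$ depends, after which the proposition is a three-line limit argument. Two small points to tidy up: treat the terminal interval $[T_f,\infty)$, where $\lambda^{v_f}$ is null, by the same recipe (your indexing should be read with $T_{f+1}=\infty$), and handle the degenerate stages with $\mathcal A(v_k)=V$, where $\rho$ is simply constant.
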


We complement the previous proposition assuring that it is always possible to choose such $r$.

\begin{lemma}\label{lorp}
Given an irreducible set of transition probabilities $p$, satisfying $p(i,i)=0$, $\forall i\in S$ and a vector $\lambda$ satisfying \eqref{hl}, there exists some $r(i)>0$, $i\in V$, such that \eqref{lww} holds.
\end{lemma}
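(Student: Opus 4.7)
The plan is to recast the vector equation $\lambda = -\sum_{i\in V} r(i)\boldsymbol{w}_i$ as a linear system $rL=\lambda$ for an appropriate matrix $L$, and then use standard facts about irreducible Markov chains to produce a strictly positive solution. First I would expand $\boldsymbol{w}_i$ coordinate-wise. Since $p(i,i)=0$, definition \eqref{dor} gives $\boldsymbol{w}_i(k) = \delta_{ik} - p(i,k)$, and summing over $i$ yields
\begin{align}
-\sum_{i\in V} r(i)\,\boldsymbol{w}_i(k) \;=\; -r(k) + \sum_{i\in V} r(i)\,p(i,k), \quad k\in V.
\end{align}
Thus, viewing $r$ and $\lambda$ as row vectors in $\R^V$, the identity \eqref{lww} is equivalent to $rL=\lambda$, where $L$ is the $|V|\times|V|$ matrix with off-diagonal entries $L(i,j)=p(i,j)$ and diagonal entries $L(i,i)=-1$; that is, $L$ is the generator of a continuous-time Markov chain on $V$ with jump probabilities $p$ and unit holding rates.

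Next I would establish the existence of some (a priori sign-indefinite) solution $r_0\in\R^V$ to this system by a dimension count. Each row of $L$ sums to zero, so $L\mathbf{1}=0$ as a column vector, and consequently the image of the linear map $r\mapsto rL$ is contained in the hyperplane $H:=\{v\in\R^V:\sum_{k\in V} v(k)=0\}$. Because $p$ is irreducible, $L$ is an irreducible generator, and so its left kernel is one-dimensional and spanned by the unique invariant distribution $\pi$ of $p$. The rank-nullity theorem then forces the image of $r\mapsto rL$ to have dimension $|V|-1$, hence to coincide with $H$. Since hypothesis \eqref{hl} places $\lambda$ in $H$, the system $rL=\lambda$ admits some $r_0\in\R^V$ as a solution.

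Finally, I would upgrade $r_0$ to a strictly positive vector by exploiting the one-dimensional freedom in the solution set. The general solution of $rL=\lambda$ has the form $r=r_0+c\pi$ for $c\in\R$. Since $p$ is irreducible, Perron-Frobenius (or directly the probabilistic interpretation of $\pi$) implies $\pi(i)>0$ for every $i\in V$, so any choice $c>\max_{i\in V}\bigl(-r_0(i)/\pi(i)\bigr)$ produces $r(i)>0$ for all $i$, yielding the desired set of rates. There is no serious obstacle here; the only conceptual point worth emphasising is that the compatibility condition $\sum_i\lambda(i)=0$ is exactly what places $\lambda$ in the image of $r\mapsto rL$, while irreducibility of $p$ is precisely what supplies a strictly positive direction $\pi$ along which to shift $r_0$ into the positive orthant.
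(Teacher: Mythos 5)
Your proof is correct, and it reaches the conclusion by a genuinely different route from the paper. Both arguments share the same endgame: find \emph{some} solution of $-\sum_i r(i)\boldsymbol{w}_i=\lambda$ and then shift it by a large positive multiple of the invariant distribution (which lies in the kernel, since $\sum_i\mu(i)\boldsymbol{w}_i=0$) to land in the positive orthant. Where you differ is in how surjectivity onto the hyperplane $H=\mathbf{1}^{\perp}$ is established. The paper works with the closed convex cone $\mathcal C=\{\sum_i a_i\boldsymbol{w}_i:a_i\ge 0\}$ and shows via a polar-cone (duality) argument that $\mathcal C^*=\{0\}$, hence $\mathcal C=\mathbf{1}^{\perp}$; this yields a solution with \emph{nonnegative} coefficients directly, before the final shift. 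You instead identify the equation as the linear system $rL=\lambda$ with $L=P-I$ and use rank--nullity together with the one-dimensionality of the left kernel (Perron--Frobenius for the irreducible stochastic matrix $P$) to conclude that the image of $r\mapsto rL$ is exactly $H$; your intermediate solution $r_0$ is sign-indefinite, and positivity is recovered entirely by the shift along $\pi$. Your version is arguably more elementary, trading convex duality for standard linear algebra; the paper's version gives the slightly stronger intermediate fact that the $\boldsymbol{w}_i$ positively span $\mathbf{1}^{\perp}$. Either way the hypothesis $\sum_i\lambda(i)=0$ enters exactly as you say, as the compatibility condition placing $\lambda$ in the image, and irreducibility enters twice: once for the one-dimensional kernel and once for the strict positivity of $\pi$.
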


\section{Characterization of the fluid limit}\label{charac}

In this section we propose a more suitable alternative to the ORP introduced in Subsection \ref{orp0}. In this problem (see Definition \ref{propab}), we make use of a family of test functions to characterize the behaviour of a path. This feature fits with the martingale approach we shall use to prove Theorem \ref{c2}. Additionally, this alternative problem provides a natural connection between the paths constructed in \eqref{zetaudef} and the solutions of the ORP. 

Let us start introducing the space of test functions. Denote
\begin{align*}
C^1(\Sigma, \R):=\big\{f\in C^1(U,\R) : U\text{ open in }\R^V, \Sigma\subset U \subseteq \R^V\big\}.
\end{align*}
For each $f\in C^1(\Sigma, \R)$ and $v\in \R^V$, we define $v(f):\Sigma \to \R$ as the derivative of $f$ in direction $v$:
\begin{align}
v(f)(u)= \frac{\partial f}{\partial v} (u), \quad u\in \Sigma.
\end{align}
Recall that $r$ is an irreducible set of transition rates on $V$. It determines the set of vectors
\begin{align}\label{bsi}
{\boldsymbol v}_i = \sum_{k\in V} r(i,k)\big(1_{\{k\}} - 1_{\{i\}}\big), \quad i\in V.
\end{align}

\begin{definition}
For each $i\in V$, let ${\mathcal D}_i$ denote the family of functions $f\in C^1(\Sigma, \R)$ satisfying
\begin{align}\label{s24}
{\boldsymbol v}_i(f)(u)=0, \quad \text{for all $u\in \Sigma$ such that $u(i)=0$}.
\end{align}
In addition, for every nonempty $B\subseteq V$, let us denote ${\mathcal D}_B:=\medcap_{i\in B} {\mathcal D}_i$. 
\end{definition}
The set of transition rates $r$ also determines the vector
\begin{align}\label{ladef}
\lambda := \sum_{i\in V} {\boldsymbol v}_i, \quad \text{or equivalently} \quad \lambda(j) = \sum_{i\in V}\big[ r(i,j) - r(j,i) \big], \; j\in V. 
\end{align}
Observe that $\lambda$ coincides with $\lambda^V$ in \eqref{lambdaA} and also satisfies \eqref{lww} if $p$ corresponds to $r$ as in \eqref{probr}. Recall the notion of $r$-absorbing subsets of $V$ introduced in Definition \ref{rabsor}. Let us denote,
\begin{align}\label{siabs}
\Sigma_{abs} := \{ u\in \Sigma : \;\text{$\mathcal S(u)$ is $r$-absorbing}\}
\end{align}
where $\mathcal S(u)$ is the support of $u$, as defined in \eqref{ssu}.

\begin{definition}\label{propab}
Let us say that $\zeta\in C(\R_+,\Sigma)$ solves the $(\lambda,\mathcal D_V)$-problem if it satisfies the following two conditions.
\begin{itemize}
\item[\bf (A)] For every $f\in \mathcal D_V$,
\begin{align}
f(\zeta_t) = f(\zeta_0) + \int_0^t \lambda(f)(\zeta_s)\,ds, \quad \forall t\ge 0.
\end{align}
\item[\bf (B)] The path $\zeta$ enters instantly to $\Sigma_{abs}$, i.e. $\inf\{ t \ge 0 : \zeta_t \in \Sigma_{abs} \} = 0.$
\end{itemize}
\end{definition}

The main goal of this section is to prove that, for each $u\in \Sigma$, the path $\zeta^{u,\lambda}$ introduced in Proposition \ref{cymp} is the unique solution of the $(\lambda,\mathcal D_V)$-problem starting at $u$.

\subsection{Trace and harmonic extension}

For each nonempty $A\subseteq V$ and $v,w\in \R^A$ we denote
\begin{align}
\langle v, w \rangle = \sum_{i\in A} v(i)w(i).
\end{align}
The Markov generator $L:\R^V\to \R^V$ associated to $r$ is given by 
\begin{align}
Lw(i) = \langle \bs v_i , w\rangle, \quad w\in \R^V, i\in V,
\end{align}
where $\bs v_i$ is the vector given in \eqref{bsi}. Let us say that $w\in \R^V$ is harmonic on $B\subseteq V$ if
\begin{align}
\langle \bs v_k , w\rangle=0, \; \forall k\in B.
\end{align}
Recall that $(X^k_s,\,s\ge 0)$ stands for a continuous-time random walk with transition rates $r$ and starting at $k\in V$, and we denote by $T^k_B$ the respective hitting time of $B\subseteq V$, as defined in \eqref{hitting}. For each nonempty $A\subseteq V$ and $i\in A$, define the vector $1^A_i\in \R^V$ as
\begin{align}
1^A_i(k) = P(T^k_A = T^k_i), \quad k\in V.
\end{align}
It is well known that $1^A_i$ is the unique vector satisfying
\begin{align}
\left\{
\begin{array}{l}
1^A_i=1_{\{i\}}, \;\text{on $A$}, \\
1^A_i \; \text{is harmonic on $V\setminus A$}.
\end{array}
\right.
\end{align}
Given $w\in \R^A$, let us denote by $w^A\in \R^V$ its (unique) harmonic extension:
\begin{align}\label{haex}
\left\{
\begin{array}{l}
w^A=w, \;\text{on $A$}, \\
w^A \; \text{is harmonic on $V\setminus A$}.
\end{array}
\right.
\end{align}
It is easy to verify that
\begin{align}\label{hext}
w^A = \sum_{i\in A} w(i)1^A_i.
\end{align}
Recall from Subsection \ref{tracep} the notion of the trace of $r$ on $A$, denoted by $r^A$. For each nonempty $A\subseteq V$, define the set of vectors $\bs v^A_i\in \R^A$, $i\in A$ as 
\begin{align}
\bs v^A_i = \sum_{k\in A} r^A(i,k)\big(1_{\{k\}} - 1_{\{i\}}\big).
\end{align}
The following lemma provides an alternative definition of vectors $\bs v^A_i$.

\begin{lemma}\label{viaw}
For any nonempty $A\subseteq V$, $w\in \R^A$ and $i\in A$ we have
\begin{align}
\langle \bs v^A_i , w\rangle = \langle \bs v_i , w^A \rangle
\end{align}
where $w^A\in \R^V$ stands for the harmonic extension of $w$, as in \eqref{haex}.
\end{lemma}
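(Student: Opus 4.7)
The statement is a direct identity: the trace generator applied to $w$ evaluated at $i\in A$ coincides with the original generator applied to the harmonic extension $w^A$ evaluated at $i\in A$. I would approach it as a straightforward computation based on the explicit definition of the trace rates $r^A(i,j)$ and the representation formula \eqref{hext} for $w^A$ off $A$.

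First, I would rewrite both pairings as Markov generators. Because $\bs v_i = \sum_{k\in V} r(i,k)(1_{\{k\}}-1_{\{i\}})$, we have $\langle \bs v_i, f\rangle = \sum_{k\in V} r(i,k)[f(k)-f(i)]$ for any $f\in\R^V$, and analogously $\langle \bs v^A_i, w\rangle = \sum_{j\in A} r^A(i,j)[w(j)-w(i)]$. So the lemma reduces to proving the scalar identity
\begin{align}
\sum_{j\in A} r^A(i,j)[w(j)-w(i)] \;=\; \sum_{k\in V} r(i,k)[w^A(k)-w^A(i)].
\end{align}

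Next, I would plug in the definition \eqref{trace-rates} of $r^A(i,j)$, which splits the left-hand side into a ``direct'' piece $\sum_{j\in A} r(i,j)[w(j)-w(i)]$ and an ``absorbing'' piece $\sum_{j\in A}\sum_{k\notin A} r(i,k) 1^A_j(k)[w(j)-w(i)]$. Since $w^A = w$ on $A$, the direct piece is exactly $\sum_{j\in A} r(i,j)[w^A(j)-w^A(i)]$, which is the part of the right-hand side coming from $k\in A$.

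For the absorbing piece, I would interchange the sums over $j$ and $k$ and use two facts: that $\sum_{j\in A} 1^A_j(k) = 1$ for $k\notin A$ (since the walk almost surely hits $A$, as $r$ is irreducible), and that $\sum_{j\in A} w(j) 1^A_j(k) = w^A(k)$ by \eqref{hext}. Together these give $\sum_{j\in A} 1^A_j(k)[w(j)-w(i)] = w^A(k) - w(i) = w^A(k) - w^A(i)$ for every $k\notin A$. The absorbing piece therefore equals $\sum_{k\notin A} r(i,k)[w^A(k)-w^A(i)]$, which is exactly the remaining part of the right-hand side. Adding the two pieces closes the computation.

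No conceptual difficulty is anticipated; the proof is a couple of lines of bookkeeping once the hitting-time identities are invoked. The only point requiring care is the double-sum manipulation and the observation that one does \emph{not} need the harmonicity of $w^A$ on $V\setminus A$ beyond the explicit representation \eqref{hext}, which is precisely what encodes that harmonicity.
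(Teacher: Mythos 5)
Your proof is correct and rests on exactly the same ingredients as the paper's: the definition \eqref{trace-rates} of the trace rates, the representation \eqref{hext} of the harmonic extension, and the normalization $\sum_{j\in A}1^A_j(k)=1$ coming from irreducibility. The only difference is organizational --- the paper verifies the bilinear identity on the basis vectors $1_{\{j\}}$, $j\in A$ (handling $j=i$ via orthogonality to constants), and then extends by linearity via \eqref{hext}, whereas you expand directly for a general $w$ --- so this is essentially the same argument.
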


\begin{proof}
For each $i,j\in A$, $i\not = j$ we have
\begin{align}\label{pref}
\langle \bs v^A_i , 1_{\{j\}} \rangle = r^A(i,j) = r(i,j) + \sum_{k\not \in A} r(i,k) 1^A_j(k) = \sum_{k\in V} r(i,k) 1^A_j(k) = \langle \bs v_i , 1^A_{j} \rangle.
\end{align} 
Since $\bs v^A_i$ is orthogonal to any constant vector in $\R^A$ and similarly to $\bs v_i$, it follows from \eqref{pref} that
\begin{align}
\langle \bs v^A_i , 1_{\{i\}} \rangle = - \sum_{j\in A \setminus \{i\}} \langle \bs v^A_i , 1_{\{j\}} \rangle = - \sum_{j\in A \setminus \{i\}} \langle \bs v_i , 1^A_{j} \rangle = \langle \bs v_i , 1^A_{i} \rangle.
\end{align}
Finally, the claim follows from \eqref{hext}.
\end{proof}

\subsection{Proof of Lemma \ref{a37}}\label{ssa37}

Recall the definition of the vector $\lambda^A\in \R^A$ from \eqref{lambdaA} or equivalently,
\begin{align}
\lambda^A = \sum_{i\in A} {\bs v}^A_i, \quad \text{for each nonempty $A\subseteq V$}.
\end{align}
It follows from Lemma \ref{viaw} that, for any nonempty $A\subseteq V$ and $w\in \R^A$,
\begin{align}\label{liaw}
\langle \lambda^A , w\rangle = \langle \lambda , w^A \rangle, \quad\text{where $w^A\in \R^V$ stands for the harmonic extension of $w$.}
\end{align}
The following lemma allows us to relate the vectors $\lambda^A$, $A\subseteq V$.

\begin{lemma}\label{lambdaab}
Given $A\subsetneq B\subseteq V$, with $A$ nonempty, we have
\begin{align}
\lambda^A(j) = \sum_{k\in B} \lambda^B(k)1^A_j(k), \quad \forall j\in A.
\end{align} 
\end{lemma}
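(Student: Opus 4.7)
The plan is to obtain the identity by testing against arbitrary $w \in \R^A$ and using Lemma \ref{viaw} (together with \eqref{liaw}) applied to the trace $r^B$ instead of $r$. The crucial structural input is the transitivity of the trace construction, namely $(r^B)^A = r^A$, which is stated in Subsection \ref{tracep} (and attributed to \cite{BL}); this ensures that the vectors $\bs v^A_i$ can be read either as trace vectors with respect to $r$ or as trace vectors with respect to $r^B$.

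Concretely, I would first establish the two-level analogue of \eqref{liaw}: for every $w \in \R^A$,
\begin{align}
\langle \lambda^A, w\rangle = \langle \lambda^B, w^{A,B}\rangle,
\end{align}
where $w^{A,B}\in\R^B$ denotes the harmonic extension of $w$ from $A$ to $B$ with respect to the trace rates $r^B$. To see this, I would apply Lemma \ref{viaw} with the ambient set $V$ replaced by $B$ and the rates $r$ replaced by $r^B$, obtaining $\langle \bs v^A_i, w\rangle = \langle \bs v^B_i, w^{A,B}\rangle$ for every $i\in A$. Summing over $i\in A$ gives $\langle \lambda^A, w\rangle = \sum_{i\in A}\langle \bs v^B_i, w^{A,B}\rangle$. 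The remaining terms $\sum_{i\in B\setminus A}\langle \bs v^B_i, w^{A,B}\rangle$ vanish because $w^{A,B}$ is, by definition, harmonic on $B\setminus A$ with respect to $r^B$; thus one may extend the sum to all $i\in B$ and recognize $\lambda^B$.

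Next I would identify $w^{A,B}$ with the restriction to $B$ of the harmonic extension $w^A\in\R^V$ of $w$. This is a standard commutation property of the trace: the hitting distribution of $A$ starting from any $k\in B$ is the same for the walk with rates $r$ and for the trace walk with rates $r^B$, since the trace process is simply the original walk observed on $B$. Consequently, $1^{A,B}_j(k) = 1^A_j(k)$ for $k\in B$, $j\in A$, and by \eqref{hext} applied in both contexts,
\begin{align}
w^{A,B}(k) = \sum_{j\in A} w(j) 1^A_j(k), \quad k\in B.
\end{align}

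Substituting into the displayed identity and exchanging the order of summation yields
\begin{align}
\langle \lambda^A, w\rangle = \sum_{j\in A} w(j) \sum_{k\in B} \lambda^B(k) 1^A_j(k),
\end{align}
for every $w\in\R^A$, whence $\lambda^A(j) = \sum_{k\in B} \lambda^B(k) 1^A_j(k)$ for all $j\in A$, which is the claim. The only real subtlety is the bookkeeping around which ``ambient'' space one is working in; once the transitivity $(r^B)^A = r^A$ and the corresponding commutation of harmonic extensions are taken as known, the proof is a direct application of Lemma \ref{viaw}.
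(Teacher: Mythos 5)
Your proof is correct, and it rests on the same core mechanism as the paper's: two uses of the duality $\langle \bs v^A_i,w\rangle=\langle \bs v_i,w^A\rangle$ of Lemma \ref{viaw} combined with harmonicity to pass from $A$ to $B$. The difference is in the routing of the second application. The paper never leaves the ambient space $V$: it writes $\lambda^A(j)=\sum_{i\in A}\langle \bs v_i,1^A_j\rangle$, extends the sum to $i\in B$ using that $1^A_j$ is harmonic on $B\setminus A$, and then applies Lemma \ref{viaw} for the pair $(B,V)$, using only that $1^A_j$ is the harmonic extension of its own restriction to $B$. You instead apply Lemma \ref{viaw} inside the ambient space $B$ equipped with the trace rates $r^B$, which forces you to import two extra facts: the transitivity $(r^B)^A=r^A$ (stated in \S\ref{tracep} with reference to \cite{BL}) and the identification $1^{A,B}_j(k)=1^A_j(k)$ for $k\in B$, i.e.\ that the hitting distribution of $A$ from points of $B$ is unchanged by passing to the trace walk. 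Both facts are standard properties of the trace process and are legitimately citable, so your argument is complete; note, though, that the second one can also be derived internally from Lemma \ref{viaw} itself (the restriction $w^A|_B$ is $r^B$-harmonic on $B\setminus A$ because $\langle \bs v^B_i,w^A|_B\rangle=\langle \bs v_i,w^A\rangle=0$ for $i\in B\setminus A$, and uniqueness of the harmonic extension does the rest), which is essentially the shortcut the paper takes. The paper's version buys self-containedness; yours makes the probabilistic meaning of the intermediate identity $\langle\lambda^A,w\rangle=\langle\lambda^B,w^{A,B}\rangle$ more transparent.
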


\begin{proof}
By using Lemma \ref{viaw} we have
\begin{align}\label{jj1}
\lambda^A(j) = \sum_{i\in A} \langle  {\bs v}^A_i , 1_{\{j\}} \rangle = \sum_{i\in A}\langle  {\bs v}_i , 1^A_{j} \rangle = \sum_{i\in B}\langle  {\bs v}_i , 1^A_{j} \rangle
\end{align}
In the last equality we used that $1^A_j$ is harmonic on $B\setminus A$. Now, since $1^A_j$ is also the harmonic extension of $1^A_j|_B$ (restriction of $1^A_j$ to $B$), then Lemma \ref{viaw} provides
\begin{align}\label{jj2}
\langle {\bs v}^B_i, 1^A_{j}|_B\rangle = \langle  {\bs v}_i , 1^A_{j} \rangle, \quad \text{for each $i\in B$}
\end{align}
Putting \eqref{jj1} and \eqref{jj2} together, we get the desired result.
\end{proof}

\begin{proof}[Proof of Lemma \ref{a37}]
We assume that $B$ is $r$-absorbing, $A$ is a proper subset of $B$ and
\begin{align}\label{hipo1}
\lambda^B(k)\le 0, \quad \forall k\in B\setminus A.
\end{align}
Fix first an arbitrary $\ell \in B\setminus A$. Applying Lemma \ref{lambdaab} for $\lambda^{A\cup \{\ell\}}$ and $\lambda^{B}$, we get
\begin{align}
\lambda^{A\cup \{\ell\}}(\ell) = \sum_{k\in B} \lambda^B(k)1^{A\cup \{\ell\}}_{\ell} (k) = \sum_{k\in B\setminus A} \lambda^B(k)1^{A\cup \{\ell\}}_{\ell} (k).
\end{align}
This last expression is nonpositive due to \eqref{hipo1}. Fix now an arbitrary $j\in V\setminus B$. Applying Lemma \ref{lambdaab} for $\lambda^{A\cup \{j\}}$ and $\lambda^{B\cup \{j\}}$ we get
\begin{align}\label{jj3}
\lambda^{A\cup\{j\}}(j) = \lambda^{B\cup\{j\}}(j) + \sum_{k\in B\setminus A} \lambda^{B\cup \{j\}}(k)1^{A\cup\{j\}}_j(k) \le \lambda^{B\cup\{j\}}(j) + \sum_{k\in B\setminus A} \lambda^{B\cup \{j\}}(k).
\end{align}
Let us now apply Lemma \ref{lambdaab} to relate $\lambda^{B}$ and $\lambda^{B\cup \{j\}}$:
\begin{align}\label{jj4}
\lambda^B(k) = \lambda^{B\cup\{j\}}(j) 1^{B}_{k}(j) + \lambda^{B\cup\{j\}}(k), \quad \text{for each $k\in B$}.
\end{align}
Recall that $1^{B}_{k}(j)=P( T^j_B = T^j_k )$. Then, by summing up all the terms in \eqref{jj4}, for $k\in B\setminus A$, we get
\begin{align}\label{jj5}
\sum_{k\in B\setminus A} \lambda^B(k) = \lambda^{B\cup\{j\}}(j) P( T^j_{B} = T^j_{B\setminus A}) +  \sum_{k\in B\setminus A} \lambda^{B\cup\{j\}}(k).
\end{align}
By using \eqref{jj5} in \eqref{jj3} and hypothesis \eqref{hipo1} we obtain
\begin{align}
\lambda^{A\cup\{j\}}(j) \le \lambda^{B\cup\{j\}}(j) P\big( T^j_{B} = T^j_{A}\big)  + \sum_{k\in B\setminus A} \lambda^B(k) \le \lambda^{B\cup\{j\}}(j).
\end{align}
Since $B$ is $r$-absorbing then $\lambda^{A\cup\{j\}}(j) \le 0$. We have thus shown that $A$ is $r$-absorbing.
\end{proof}


\subsection{Absorption}\label{absb}

The main feature of any $\zeta\in C(\R_+,\Sigma)$ satisfying {\bf (A)} is the emergence of absorbing faces in the sense of Remark \ref{rpab}. In this subsection, we prove a preliminary result in this direction. Namely, in Lemma \ref{absort}, we prove that if $\zeta\in C(\R_+, \Sigma)$ satisfies {\bf (A)} and $S=\mathcal S(\zeta_0)$ is $r$-absorbing then $\zeta$ does not exit $\partial_{S}\Sigma$ as long as its $S$-coordinates are strictly positive. 

Given $h\in {\mathcal D}_B$, it will be useful to get a function $f\in {\mathcal D}_V$, so that $f\equiv h$ and $\lambda(f)\equiv \lambda(h)$ on a large subset of $\Sigma$. This is guaranteed by the following lemma whose proof is very technical and we postpone to Section \ref{perturb}.

\begin{lemma}\label{extens}
Let $A\subseteq V$ be nonempty and $B=V\setminus A$. Given $h\in {\mathcal D}_B$ and $\epsilon>0$, there exists $f\in {\mathcal D}_V$ such that
\begin{align}\label{a21}
u\in \Sigma,\,\min_{i\in A} u(i) \ge \epsilon \quad \implies \quad f(u)=h(u)\quad\text{ and }\quad \lambda(f) (u)= \lambda(h)(u).
\end{align}
\end{lemma}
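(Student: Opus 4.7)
The plan is to build $f$ from $h$ by a series of local corrections, one for each face $\{u(i)=0\}$ with $i\in A$, proceeding by induction on $|A|$. The base case $A=\varnothing$ is immediate with $f=h$. For the inductive step I fix an arbitrary $i_0\in A$ and aim to produce $\tilde h\in \mathcal{D}_{B\cup\{i_0\}}$ that coincides with $h$, together with its first derivatives, on the region $\{u\in \Sigma : u(i_0)\ge \epsilon\}$; the inductive hypothesis applied to $\tilde h$ with $A'=A\setminus\{i_0\}$ and $B'=B\cup\{i_0\}$ then produces the desired $f\in\mathcal{D}_V$. Since every successive correction leaves $h$ unchanged to first order on $\Sigma_{A,\epsilon}$, the identities $f=h$ and $\lambda(f)=\lambda(h)$ on $\Sigma_{A,\epsilon}$ come for free.

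For the inductive step I would set $\tilde h=h-q$ with an ansatz
\[
q(u)\;=\;u(i_0)\,\chi(u(i_0))\,\eta_B(u)\,F(u),
\]
where $\chi\colon[0,1]\to[0,1]$ is smooth with $\chi(0)=1$ and vanishes, together with all its derivatives, for $t\ge\epsilon$; $\eta_B\colon \Sigma\to[0,1]$ is a smooth cut-off identically equal to $1$ outside a small neighbourhood of $\bigcup_{j\in B}\{u(j)=0\}$ and vanishing to all orders on those faces; and the scalar $F$ is chosen to make $\boldsymbol{v}_{i_0}(q)=\boldsymbol{v}_{i_0}(h)$ on $\{u(i_0)=0\}$. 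Using $\partial_{i_0}q(u)|_{u(i_0)=0}=\eta_B(u) F(u)$ and $\partial_{\ell}q(u)|_{u(i_0)=0}=0$ for $\ell\ne i_0$, one computes $\boldsymbol{v}_{i_0}(q)(u)=-r(i_0)\eta_B(u) F(u)$ there, so the natural choice is $F(u)=-\boldsymbol{v}_{i_0}(h)(u)/r(i_0)$ on the interior portion of the face. The flatness of $\chi$ at $\epsilon$ renders $q$ and $\nabla q$ identically zero on $\{u(i_0)\ge \epsilon\}$, guaranteeing that $\tilde h$ matches $h$ to first order on the safety region; the flatness of $\eta_B$ on the $B$-faces makes $q$ and all its first partial derivatives vanish there, so $\tilde h$ inherits the $\mathcal{D}_B$-membership of $h$.

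The main obstacle is the transition zone where the two cut-offs interact: the identity $\boldsymbol{v}_{i_0}(\tilde h)=0$ must hold on the \emph{entire} face $\{u(i_0)=0\}$, including its intersections with the $B$-faces, where $\eta_B<1$ and the residual $(1-\eta_B)\boldsymbol{v}_{i_0}(h)$ might survive. I would handle this by pre-flattening $h$ in the $B$-coordinates before performing the $i_0$-correction: the hypothesis $h\in\mathcal{D}_j$ together with a Taylor expansion of $\boldsymbol{v}_{i_0}(h)(u)$ in $u(j)$ around each face $\{u(j)=0\}$, $j\in B$, shows that $\boldsymbol{v}_{i_0}(h)$ itself vanishes to a controllable order on the relevant intersections; by tuning the order of vanishing of $\eta_B$ (through auxiliary $\chi$-type factors in the $B$-coordinates) to dominate this Taylor order, the product $(1-\eta_B)\boldsymbol{v}_{i_0}(h)$ is forced to vanish identically on $\{u(i_0)=0\}$. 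This delicate calibration of cut-off flatness against the boundary vanishing of $h$, in the spirit of Whitney-type extensions performed stratum by stratum, is the genuinely technical ingredient and is the reason the authors devote a separate section to the proof.
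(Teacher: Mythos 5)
Your overall architecture (induct on $|A|$, add one correction per face, note that first-order agreement on $\{\min_{i\in A}u(i)\ge\epsilon\}$ gives \eqref{a21} for free) is reasonable, but the inductive step has a genuine gap exactly where you locate the difficulty, and the repair you propose does not work. The claim that ``the hypothesis $h\in\mathcal D_j$ together with a Taylor expansion \dots shows that $\boldsymbol v_{i_0}(h)$ itself vanishes to a controllable order'' on $\{u(i_0)=0\}\cap\{u(j)=0\}$ is false: $h\in\mathcal D_j$ constrains only the derivative of $h$ along $\boldsymbol v_j$ on $\{u(j)=0\}$ and says nothing about the derivative along $\boldsymbol v_{i_0}$ there. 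For a concrete counterexample take $h(u)=\langle w,u\rangle$ linear with $\langle\boldsymbol v_j,w\rangle=0$ for all $j\in B$ but $\langle\boldsymbol v_{i_0},w\rangle\neq0$ (such $w$ exists because the vectors $\boldsymbol v_j$, $j\in B$, do not span ${\boldsymbol 1}^\perp$); then $\boldsymbol v_{i_0}(h)$ is a nonzero constant, the residual $(1-\eta_B)\boldsymbol v_{i_0}(h)$ is nonzero throughout the transition zone of $\eta_B$ on the face $\{u(i_0)=0\}$, and $\tilde h\notin\mathcal D_{i_0}$. Since membership in $\mathcal D_{i_0}$ requires \emph{exact} vanishing of $\boldsymbol v_{i_0}(\tilde h)$ on the entire face, no calibration of the flatness of $\eta_B$ can help: a product of two factors that are both nonzero at a point is nonzero there. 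A secondary problem is regularity: $F=-\boldsymbol v_{i_0}(h)/r(i_0)$ is only continuous (as $h$ is only $C^1$), so $q=u(i_0)\chi(u(i_0))\eta_B(u)F(u)$ need not lie in $C^1(\Sigma,\R)$ and $\tilde h$ may leave the test-function class.

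The paper avoids this corner problem entirely by working multiplicatively rather than additively: it sets $f=h\,\Phi$, where $\Phi$ vanishes identically in a neighbourhood of every face $\{u(k)=0\}$, $k\in A$, so that the conditions $\boldsymbol v_k(f)=0$ there are trivially satisfied ($f\equiv0$ near those faces), and no exact cancellation of $\boldsymbol v_k(h)$ is ever required. The price is that one must check $f\in\mathcal D_B$, and this is where the real work lies: $\Phi$ is built as a product of cutoffs $\Phi_{D\cup\{k\}}$ indexed by \emph{all} subsets $D\subseteq B$ and $k\in A$, each a smooth function of the quantity $I_{D\cup\{k\}}$ from Lemma 4.1 of \cite{BJL} (comparable to $\|\zeta\|^2_{D\cup\{k\}}$), with geometrically nested thresholds $\lambda^{|D|}$. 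On a face $\{u(j)=0\}$, $j\in B$, one has $\|\zeta\|_{D\cup\{j,k\}}=\|\zeta\|_{D\cup\{k\}}$, and the calibration $\lambda=c/(3C)$ forces each Leibniz term of $\boldsymbol v_j(f)$ to contain either a factor $\Phi_{D\cup\{j,k\}}=0$ or a factor $\nabla\Phi_{D\cup\{k\}}=0$. If you want to salvage an approach close to yours, you should replace the additive correction by this kind of multiplicative truncation; as it stands, the proposed inductive step fails.
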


In the following lemma we show how $\lambda^A$ arises as the velocity of a certain projection of $\zeta$ on $\R^A$.

\begin{lemma}\label{linear}
If $\zeta\in C(\R_+,\Sigma)$ satisfies {\bf (A)} then, for any nonempty $A\subseteq V$ and $0\le s_1 < s_2$,
\begin{align}
\underset{s_1\le t \le s_2}{\min_{i\in A}} \zeta_t(i) >0 \quad \implies \quad \langle 1^A_i,\zeta_t \rangle = \langle 1^A_i,\zeta_{s_1}\rangle + \lambda^A(i)(t-s_1), \;\forall i\in A, \forall t\in [s_1,s_2].
\end{align}
\end{lemma}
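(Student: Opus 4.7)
The plan is to apply condition \textbf{(A)} to an appropriate test function that represents the linear functional $\langle 1^A_i,\,\cdot\,\rangle$. The natural candidate is the linear map $h:\Sigma\to \R$ given by $h(u):=\langle 1^A_i,u\rangle$. Because $h$ is linear, its derivative in any direction $\bs v_k$ equals the constant $\langle \bs v_k,1^A_i\rangle$. Since $1^A_i$ is by construction harmonic on $V\setminus A$, we get $\langle \bs v_k,1^A_i\rangle=0$ for every $k\in V\setminus A$, and thus $h\in\mathcal D_B$ with $B:=V\setminus A$. Moreover, applying \eqref{liaw} to $w=1_{\{i\}}\in\R^A$ (so that $w^A=1^A_i$) gives
\begin{align}
\lambda(h)(u)=\langle \lambda,1^A_i\rangle=\langle \lambda^A,1_{\{i\}}\rangle=\lambda^A(i),
\end{align}
a constant on all of $\Sigma$.

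The only obstacle is that $h$ generally does not belong to $\mathcal D_V$, because $\langle \bs v_k,1^A_i\rangle$ need not vanish for $k\in A$, whereas condition \textbf{(A)} only applies to test functions in $\mathcal D_V$. To bridge this gap I would invoke Lemma \ref{extens}. By continuity of $\zeta$ on the compact interval $[s_1,s_2]$ together with the strict positivity hypothesis, one can choose $\epsilon>0$ such that $\zeta_t(j)\ge\epsilon$ for every $j\in A$ and every $t\in[s_1,s_2]$. Lemma \ref{extens}, applied to $h\in\mathcal D_B$ and this $\epsilon$, then produces $f\in\mathcal D_V$ with $f(u)=h(u)$ and $\lambda(f)(u)=\lambda(h)(u)=\lambda^A(i)$ whenever $\min_{j\in A}u(j)\ge\epsilon$. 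In particular, along the trajectory $\zeta$ on $[s_1,s_2]$ we have $f(\zeta_t)=\langle 1^A_i,\zeta_t\rangle$ and $\lambda(f)(\zeta_t)=\lambda^A(i)$.

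Finally, applying \textbf{(A)} to $f$ and subtracting the identities at times $t$ and $s_1$, I obtain
\begin{align}
\langle 1^A_i,\zeta_t\rangle-\langle 1^A_i,\zeta_{s_1}\rangle = f(\zeta_t)-f(\zeta_{s_1})=\int_{s_1}^{t}\lambda(f)(\zeta_s)\,ds=\lambda^A(i)(t-s_1),
\end{align}
for every $t\in[s_1,s_2]$ and every $i\in A$, which is the desired conclusion. The delicate point is really the existence of a genuine $\mathcal D_V$-extension that agrees with $h$ \emph{together} with its $\lambda$-derivative on the relevant region; this is precisely the content of Lemma \ref{extens}, whose proof is deferred, and once it is in hand the rest of the argument reduces to the short linear computation above.
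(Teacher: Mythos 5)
Your proposal is correct and follows essentially the same route as the paper: the same test function $h(u)=\langle 1^A_i,u\rangle$, the same verification that $h\in\mathcal D_{V\setminus A}$ via harmonicity of $1^A_i$, the same computation $\lambda(h)\equiv\lambda^A(i)$ from \eqref{liaw}, and the same appeal to Lemma \ref{extens} with $\epsilon=\min\{\zeta_t(j):j\in A,\,s_1\le t\le s_2\}$ before applying condition \textbf{(A)}. No gaps.
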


\begin{proof} 
Fix some $i\in A$ and consider $h(u) = \langle 1^A_i,u \rangle$, $u\in \Sigma$. Since, for all $u\in \Sigma$, we have
\begin{align}
{\boldsymbol v_j}(h)(u) = \langle {\boldsymbol v_j} , 1^A_i \rangle, \quad \forall j\in V,
\end{align}
then $h\in \mathcal D_{V\setminus A}$. Due to \eqref{liaw} we have
\begin{align}
\lambda(h)(u) = \langle \lambda , 1^A_i \rangle = \lambda^A(i), \quad \forall u\in \Sigma.
\end{align}
Denote $\epsilon:= \min\{ \zeta_t(j) : j\in A, s_1\le t \le s_2 \}$. By Lemma \ref{extens}, there exists $f\in {\mathcal D}_V$ such that
\begin{align}
\min_{j\in A} u(j) \ge \epsilon \quad \implies \quad f(u) = \langle 1^A_i,u \rangle \quad \text{and} \quad \lambda(f)(u)=\lambda^A(i).
\end{align}
Now, the desired result follows from applying {\bf (A)} to function $f$.
\end{proof}

We now introduce some elements from linear algebra that we shall use in the proof of Lemma \ref{absort}. Let $S$ be a nonempty proper subset of $V$ and denote $B=V\setminus S$. Consider the set of vectors 
\begin{align}
e^B_j=1^{S\cup\{j\}}_j, \quad j\in B.
\end{align}
Notice that, for any $j,k\in B$, we have
\begin{align}
\langle {\boldsymbol v}_k , e^B_j \rangle = 0, \;\text{if $k\not = j$} \quad \text{and} \quad \langle {\boldsymbol v}_j, e^B_j \rangle < 0.
\end{align}
Therefore, $(e^B_j)_{j\in B}$, is linearly independent and, for each $k\in B$, we may write
\begin{align}\label{desck}
1_{\{k\}} = \sum_{j\in B} \alpha^j_k e^B_j, \quad \text{where} \quad \alpha^j_k=\frac{ \langle {\boldsymbol v}_j , 1_{\{k\}} \rangle }{ \langle {\boldsymbol v}_j , e^B_j \rangle } \cdot
\end{align}
For all $j,k\in B$, it is simple to verify that
\begin{align}\label{desal}
\alpha^k_k\ge 0 \quad \text{and} \quad \alpha^j_k\le 0, \; \text{if $j\not = k$}.
\end{align}

\begin{lemma}\label{absort}
Fix some $\zeta\in C(\R_+, \Sigma)$. Denote $S= \mathcal S(\zeta_0)$ and
\begin{align}
T=\inf\big\{t\ge 0 : \textstyle{ \prod_{i\in S}\zeta_t(i)=0}\big\}.
\end{align}
If $\zeta$ satisfies condition {\bf (A)} and $S$ is $r$-absorbing then
\begin{align}
\zeta_t(j)=0, \quad \forall j\in V\setminus S \quad \text{and} \quad t\in [0,T).
\end{align}
\end{lemma}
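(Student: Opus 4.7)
The plan is to argue by contradiction, using Lemma~\ref{linear} on traces of the form $S \cup \{j\}$ to track any mass appearing on $B := V \setminus S$ backward in time.

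First, suppose $\zeta_{t_0}(j_0) > 0$ for some $j_0 \in B$ and $t_0 \in (0, T)$. By continuity of $\zeta$, set $\tau_0 := \sup\{s \in [0, t_0] : \zeta_s(j_0) = 0\}$; then $\zeta_{\tau_0}(j_0) = 0$, $\tau_0 < t_0$, and $\zeta_s(j_0) > 0$ on $(\tau_0, t_0]$. Since $t_0 < T$, also $\zeta_s(i) > 0$ for all $i \in S$ on $[\tau_0, t_0]$, so $\min_{i \in S \cup \{j_0\}} \zeta_s(i) > 0$ on every compact sub-interval of $(\tau_0, t_0]$. I would apply Lemma~\ref{linear} with $A = S \cup \{j_0\}$ and $i = j_0$ (noting $1^A_{j_0} = e^B_{j_0}$) on such sub-intervals and let the left endpoint tend to $\tau_0^+$; using continuity of $\zeta$ this gives
\begin{equation*}
\langle e^B_{j_0}, \zeta_{t_0}\rangle - \langle e^B_{j_0}, \zeta_{\tau_0}\rangle \;=\; \lambda^{S \cup \{j_0\}}(j_0)(t_0 - \tau_0) \;\le\; 0,
\end{equation*}
where the sign comes from $S$ being $r$-absorbing. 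Since $e^B_{j_0}(j_0) = 1$ and $e^B_{j_0} \ge 0$, the left-hand value $\langle e^B_{j_0}, \zeta_{t_0}\rangle$ is at least $\zeta_{t_0}(j_0) > 0$, so $\langle e^B_{j_0}, \zeta_{\tau_0}\rangle > 0$. But $e^B_{j_0}$ vanishes on $S$ and $\zeta_{\tau_0}(j_0) = 0$, so this sum reduces to $\sum_{k \in B \setminus \{j_0\}} e^B_{j_0}(k) \zeta_{\tau_0}(k) > 0$, forcing some $j_1 \in B \setminus \{j_0\}$ with $\zeta_{\tau_0}(j_1) > 0$. If $\tau_0 = 0$ this contradicts $\mathcal{S}(\zeta_0) = S$ at once; otherwise I would iterate with $(t_0, j_0)$ replaced by $(\tau_0, j_1)$, producing a strictly decreasing sequence of times $t_0 > \tau_0 > \tau_1 > \cdots$ and sites $j_0, j_1, j_2, \ldots \in B$ with $\zeta_{\tau_n}(j_{n+1}) > 0$ at each step.

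\textbf{Main obstacle.} The delicate part is to show that this iteration must terminate with some $\tau_n = 0$. Choosing $j_{n+1}$ to maximize $e^B_{j_n}(k)\zeta_{\tau_n}(k)$ over $k \in B \setminus \{j_n\}$ yields a quantitative lower bound on $\zeta_{\tau_n}(j_{n+1})$ in terms of $\zeta_{t_n}(j_n)$; the naive estimate $\zeta_{\tau_n}(j_{n+1}) \ge \zeta_{t_n}(j_n)/(|B|-1)$ only decays in $n$, so one has to exploit that the hitting probabilities $e^B_{j_n}(k)$ are strictly less than $1$ — a consequence of the irreducibility of $r$ guaranteeing positive probability of reaching $S$ before $j_n$ from any $k \in B$ — to turn the chain of inequalities into geometric growth. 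Combined with the uniform bound $\zeta \le 1$ inherited from $\Sigma$, the iteration then must stop after finitely many steps, and the only way it can stop is with $\tau_n = 0$, delivering the required contradiction. Making this geometric growth uniform over all possible choices of $(t_0, j_0)$ and of the chain is where I expect the main technical subtlety to lie.
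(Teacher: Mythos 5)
Your first step is sound and uses exactly the right ingredients (Lemma~\ref{linear} applied to $A=S\cup\{j\}$, the vectors $e^B_j=1^{S\cup\{j\}}_j$, and the sign $\lambda^{S\cup\{j\}}(j)\le 0$ from $r$-absorption), but the termination of your backward iteration is a genuine gap, and the fix you sketch does not close it. From $\langle e^B_{j_n},\zeta_{\tau_n}\rangle=\sum_{k\in B\setminus\{j_n\}}e^B_{j_n}(k)\zeta_{\tau_n}(k)\ge \zeta_{t_n}(j_n)$ you can only conclude $\max_{k}\zeta_{\tau_n}(k)\ge \zeta_{t_n}(j_n)/\bigl((|B|-1)\max_k e^B_{j_n}(k)\bigr)$; even with a uniform strict bound $e^B_{j_n}(k)\le 1-\epsilon$, geometric growth would require $(1-\epsilon)(|B|-1)<1$, which is not implied by irreducibility. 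Without growth, the times $\tau_0>\tau_1>\cdots$ may accumulate at a positive limit with the tracked masses decaying to $0$, and no contradiction is reached. Note also that the sites $j_n$ may recur, so you cannot terminate by exhausting $B$.

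The paper avoids the iteration entirely by changing the bookkeeping: it monitors the scalar functional $\Vert u\Vert_h=\max_{j\in B}h_j(u)$ with $h_j(u)=\langle e^B_j,u\rangle/\langle e^B_j,1\rangle$, and takes the \emph{first} time $s_2$ at which $\Vert\zeta_t\Vert_h$ reaches a level $\delta$. The key point you are missing is the inverse-positivity of the system $\{e^B_j\}_{j\in B}$: writing $1_{\{k\}}=\sum_{j\in B}\alpha^j_k e^B_j$ with $\alpha^k_k\ge 0$ and $\alpha^j_k\le 0$ for $j\ne k$ (see \eqref{desck}--\eqref{desal}), one deduces that at time $s_2$ the maximizing coordinate $k$ itself satisfies $\zeta_{s_2}(k)\ge\delta$. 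This converts information about the harmonic functionals back into pointwise positivity of a single coordinate on a whole interval $[s_1,s_2]$, so that one application of Lemma~\ref{linear} backward in time gives $h_k(\zeta_{s_1})\ge h_k(\zeta_{s_2})=\delta$, contradicting the minimality of $s_2$. If you want to salvage your chain argument you would need some such uniformly controlled functional; as written, the ``geometric growth'' step is not just a technical subtlety but the missing core of the proof.
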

\begin{proof}
Assume that $B:=V\setminus S$ is nonempty, otherwise the claim is trivial. Clearly,
\begin{align}
\langle e^B_j , 1 \rangle \ge 1, \quad \forall j\in B.
\end{align}
Define, for each $j\in B$ and $u\in \Sigma$,
\begin{align}
h_j(u) = \frac{\langle e^B_j, u \rangle }{\langle e^B_j , 1 \rangle}, \quad \text{so that} \quad 0\le h_j(u)\le 1.
\end{align}
Also, denote $\Vert u\Vert_h :=\max_{j\in B} h_j(u)$ and observe that 
\begin{align}\label{jq0}
\Vert u\Vert_h=0 \quad \iff \quad u(j)=0, \;\forall j\in B.
\end{align}
Let us assume that the claim is false and proceed by contradiction. By \eqref{jq0},
\begin{align}
\exists \,\delta>0 \quad \text{such that} \quad \sup_{0\le t < T} \Vert \zeta_t \Vert_h > \delta.
\end{align}
Define $s_2 := \inf\{ 0\le t <T : \Vert \zeta_t \Vert_h \ge \delta\}$. By continuity of $\zeta$,
\begin{align}\label{desh}
s_2>0, \quad h_j(\zeta_{s_2}) \le \delta, \;\forall j\in B \quad \text{and} \quad h_{k}(\zeta_{s_2})=\delta, \;\text{for some $k\in B$}.
\end{align}
Let us now prove that $\zeta_{s_2}(k)\ge \delta$. By using \eqref{desck}, we have
\begin{align}
\zeta_{s_2}(k) = \langle 1_{\{k\}}, \zeta_{s_2} \rangle  = \sum_{j\in B} \alpha^j_k \langle e^B_j , \zeta_{s_2} \rangle = \sum_{j\in B} \alpha^j_k \langle e^B_j , 1 \rangle h_j(\zeta_{s_2}).
\end{align}
By \eqref{desh} and \eqref{desal}, the last expression is bounded below by
\begin{align}
\delta \sum_{j\in B} \alpha^j_k \langle e^B_j , 1 \rangle = \delta \langle 1_{\{k\}} , 1 \rangle = \delta.
\end{align}
Now, $\zeta_{s_2}(k)\ge \delta$ guarantees the existence of some $s_1\in (0,s_2)$, so that
\begin{align}
\zeta_t(i) > 0, \quad \text{for all $t\in [s_1,s_2]$ and $i\in S \cup\{k\}$}.
\end{align}
Therefore, by Lemma \ref{linear},
\begin{align}
\langle e^B_k,\zeta_{s_2} \rangle = \langle e^B_k,\zeta_{s_1}\rangle + \lambda^{S\cup \{k\}}(k)(s_2-s_1).
\end{align}
Since $S$ is $r$-absorbing then $\lambda^{S\cup \{k\}}(k)\le 0$ and so 
\begin{align}
\langle e^B_k,\zeta_{s_1}\rangle \ge \langle e^B_k,\zeta_{s_2} \rangle \quad \implies \quad h_k(\zeta_{s_1}) \ge h_k(\zeta_{s_2}) = \delta. 
\end{align}
This contradicts the minimality of $s_2$. We are done.
\end{proof}


\subsection{Starting at $\Sigma_{abs}$}

In this subsection we shall prove that any solution of the $(\lambda,\mathcal D_V)$-problem starting at $u\in\Sigma_{abs}$, must coincide with $\zeta^u$ constructed in \eqref{zetaudef}. Our strategy will be to iterate the result we state in the following lemma.

\begin{lemma}\label{fp}
Suppose that $\zeta\in C(\R_+, \Sigma)$ satisfies condition {\bf (A)}
and that $S:= \mathcal S(\zeta_0)$ is $r$-absorbing. Define,
\begin{align}
\hat\lambda^S(i) :=
\left\{
\begin{array}{cl}
\lambda^S(i), & \text{for $i\in S$}, \\
0, & \text{otherwise},
\end{array}
\right.
\quad \text{and} \quad T:=\inf\big\{t\ge 0 : \zeta_0 + t \hat \lambda^S \not \in \Sigma \big\},
\end{align}
where $\inf \varnothing = \infty$. Then
\begin{align}\label{ww0}
\zeta_t = \zeta_{0} + t\hat\lambda^S, \quad \text{for all $t\in [0,T)$.}
\end{align}
Moreover, if $T<\infty$ then $\mathcal S(\zeta_T)$ is $r$-absorbing and a proper subset of $\mathcal S(\zeta_0)$.
\end{lemma}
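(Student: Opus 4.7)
The plan is to combine Lemma~\ref{absort}, which traps $\zeta$ on the face $\partial_S\Sigma$ while its $S$-coordinates remain strictly positive, with Lemma~\ref{linear}, which identifies velocities along harmonic directions $1^A_i$, and then close the loop with Lemma~\ref{a37} at the exit time $T$.

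First, let $T' := \inf\{t \ge 0 : \prod_{i \in S} \zeta_t(i) = 0\}$. Since $S = \mathcal{S}(\zeta_0)$ is $r$-absorbing, Lemma~\ref{absort} yields $\zeta_t(j) = 0$ for every $j \in V\setminus S$ and $t \in [0, T')$. On any subinterval $[0, s_2]$ with $s_2 < T'$ the minimum $\min\{\zeta_t(i) : i\in S,\,0\le t\le s_2\}$ is strictly positive, so Lemma~\ref{linear} with $A = S$ gives $\langle 1^S_i, \zeta_t\rangle = \langle 1^S_i, \zeta_0\rangle + \lambda^S(i)\,t$ for $i \in S$. The decisive observation is that $1^S_i(k) = \mathbf{1}_{\{i=k\}}$ on $S$ (because $T^k_S = 0$ there), and the previous step makes $\zeta_t$ vanish off $S$; thus the inner product collapses to $\zeta_t(i)$, and we get $\zeta_t(i) = \zeta_0(i) + \lambda^S(i)\,t$ on $[0, T')$. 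Combined with the vanishing off $S$, this is exactly $\zeta_t = \zeta_0 + t\hat{\lambda}^S$ on $[0, T')$.

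Next I need $T' \ge T$ in order to translate the identity to the interval $[0, T)$ in the statement. Since $\sum_{i\in S}\lambda^S(i)=0$, the affine path $\zeta_0 + t\hat{\lambda}^S$ stays in $\Sigma$ precisely as long as its $S$-coordinates stay nonnegative, and all of them remain strictly positive on $[0, T)$, with
\[
T = \min\{-\zeta_0(i)/\lambda^S(i) : i \in S,\,\lambda^S(i) < 0\}
\]
(and $T=\infty$ if the set is empty). If one had $T' < T$, continuity would extend the identity to $t=T'$, forcing $\zeta_{T'}(i) > 0$ for every $i\in S$, contradicting the definition of $T'$. Hence $T' \ge T$, which proves the first assertion.

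For the second assertion, assume $T < \infty$. By continuity, $\zeta_T = \zeta_0 + T\hat{\lambda}^S$. A coordinate $i^\star$ attaining the minimum defining $T$ satisfies $\zeta_T(i^\star) = 0$, so $\mathcal{S}(\zeta_T) \subsetneq S$. Moreover, for every $k \in S\setminus \mathcal{S}(\zeta_T)$ the relation $\zeta_0(k) + \lambda^S(k)\,T = 0$ forces $\lambda^S(k) \le 0$, so Lemma~\ref{a37} applied with $B = S$ and $A = \mathcal{S}(\zeta_T)$ concludes that $\mathcal{S}(\zeta_T)$ is $r$-absorbing. The main subtlety I expect is the collapse $\langle 1^S_i,\zeta_t\rangle = \zeta_t(i)$: it relies crucially on the $r$-absorbing hypothesis via Lemma~\ref{absort}, for without the vanishing of the non-$S$ coordinates the harmonic extension terms $1^S_i(k)\zeta_t(k)$ would contaminate the identity and prevent the rectilinear description of $\zeta$ on $[0,T)$.
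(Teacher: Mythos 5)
Your proposal is correct and follows essentially the same route as the paper: trap the path on $\partial_S\Sigma$ via Lemma~\ref{absort}, identify the velocities via Lemma~\ref{linear} with $A=S$ (where $1^S_i=1_{\{i\}}$ on $S$ collapses the harmonic inner products), and apply Lemma~\ref{a37} at the exit time. The only cosmetic difference is that the paper proves the equality $T'=T$ whereas you establish only $T'\ge T$, which indeed suffices for both assertions.
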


\begin{proof}
Consider
\begin{align}
\hat T:=\inf\big\{t\ge 0 : \textstyle{ \prod_{i\in S} \zeta_t(i)=0}\big\}, \quad \text{where} \quad \inf \varnothing = \infty.
\end{align}
By continuity of $\zeta$, we have $\hat T>0$. Fix an arbitrary $T'< \hat T$. Since $\zeta_t(i) >0$, for all $i\in S$ and $t\in [0,T']$ then, by Lemma \ref{linear},
\begin{align}\label{ec0}
\langle 1^S_i,\zeta_t \rangle = \langle 1^S_i,\zeta_{0}\rangle + t\lambda^S(i), \quad \text{for all $i\in S$ and $t\in [0,T']$.}
\end{align}
By Lemma \ref{absort}, we conclude from \eqref{ec0} that
\begin{align}\label{sd1}
\zeta_t = \zeta_{0} + t\hat\lambda^S, \quad \text{for all $t\in [0,\hat T)$.}
\end{align}
We now show that $\hat T=T$. By \eqref{sd1} and definition of $\hat T$ we have, for all $t<\hat T$,
\begin{align}
\textstyle{\prod_{i\in S}}\big(\zeta_0(i)+t\hat\lambda^S(i)\big) > 0 \quad \implies \quad \zeta_{0} + t\hat\lambda^S \in \Sigma.
\end{align}
Therefore, $\hat T\le T$. So, it remains to analyse the case in which $\hat T<\infty$. By definition of $\hat\lambda^S$ and $\hat T$, we have
\begin{align}\label{psub}
\mathcal S(\zeta_{\hat T})\subseteq S \quad \text{and} \quad S \setminus \mathcal S(\zeta_{\hat T}) = \{ i\in S : \zeta_{\hat T}(i)=0 \} \not = \varnothing.
\end{align}
In addition, for any $i\in S \setminus \mathcal S(\zeta_{\hat T})$ we have
\begin{align}\label{ha37}
0= \zeta_{\hat T}(i) = \zeta_{0}(i) + \hat T\lambda^S(i) \quad \implies \quad \lambda^S(i) = - \frac{\zeta_{0}(i)}{\hat T} < 0.
\end{align}
Hence, for all $i\in S \setminus \mathcal S(\zeta_{\hat T})$ and $t>\hat T$ we have $\zeta_0(i)+ t \lambda^S(i)<0$. That is, $T\le \hat T$ and so
\begin{align}\label{ww}
T = \inf\big\{t\ge 0 : \zeta_0 + t \hat \lambda^S \not \in \Sigma \big\} = \inf\big\{t\ge 0 : \textstyle{ \prod_{i\in S} \zeta_t(i)=0}\big\}.
\end{align}
Now, \eqref{ww0} follows from \eqref{sd1}. For the final assertion, suppose that $T<\infty$. \eqref{psub} implies that $\mathcal S(\zeta_{T})$ is a proper subset of $S$. By Lemma \ref{a37} (proved in Subsection \ref{ssa37}) and \eqref{ha37} we conclude that $\mathcal S(\zeta_T)$ is $r$-absorbing.
\end{proof}

\begin{lemma}\label{pabc1}
For each $u\in \Sigma_{abs}$, there exists at most one $\zeta\in C(\R_+, \Sigma)$ satisfying {\bf (A)} and $\zeta_0=u$.
\end{lemma}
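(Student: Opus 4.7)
My plan is to iterate Lemma \ref{fp} and proceed by strong induction on $|\mathcal{S}(u)|$. The heart of the matter has already been done: Lemma \ref{fp} tells us that any solution of condition {\bf (A)} starting at a point whose support is $r$-absorbing must be the explicit rectilinear trajectory $u + t\hat\lambda^{\mathcal S(u)}$ until it first exits the simplex, and that the endpoint (if reached in finite time) again lies in $\Sigma_{abs}$ with a strictly smaller support.

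The preliminary observation I need is that condition {\bf (A)} is invariant under time translation. Indeed, if $\zeta$ satisfies {\bf (A)} and $s \ge 0$, the shifted path $\zeta^s_t := \zeta_{s+t}$ satisfies, for every $f\in \mathcal D_V$,
\begin{align}
f(\zeta^s_t) - f(\zeta^s_0) \;=\; f(\zeta_{s+t}) - f(\zeta_s) \;=\; \int_s^{s+t} \lambda(f)(\zeta_r)\,dr \;=\; \int_0^t \lambda(f)(\zeta^s_r)\,dr.
\end{align}
Thus $\zeta^s \in C(\R_+,\Sigma)$ again satisfies {\bf (A)}.

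For the induction, take two paths $\zeta,\tilde\zeta\in C(\R_+,\Sigma)$ satisfying {\bf (A)} with $\zeta_0=\tilde\zeta_0=u\in\Sigma_{abs}$. If $|\mathcal S(u)|=1$ then $u$ is a vertex of $\Sigma$, $\lambda^{\mathcal S(u)}\equiv 0$ (the sum of one number summing to zero), so $\hat\lambda^{\mathcal S(u)}=0$ and Lemma \ref{fp} gives $T=\infty$ together with $\zeta_t=\tilde\zeta_t=u$ for all $t\ge 0$. For the inductive step, assume uniqueness for all starting points in $\Sigma_{abs}$ with support of size less than $|\mathcal S(u)|$. Apply Lemma \ref{fp} to both $\zeta$ and $\tilde\zeta$: both coincide with $u+t\hat\lambda^{\mathcal S(u)}$ on $[0,T)$, and by continuity also at $t=T$. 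If $T=\infty$ we are done. Otherwise, put $v:=\zeta_T=\tilde\zeta_T$; by the second assertion of Lemma \ref{fp}, $v\in\Sigma_{abs}$ and $|\mathcal S(v)|<|\mathcal S(u)|$. By the translation invariance above, $\zeta^T$ and $\tilde\zeta^T$ both satisfy {\bf (A)} and start at $v$, so the inductive hypothesis forces $\zeta^T=\tilde\zeta^T$, hence $\zeta=\tilde\zeta$ on $[0,\infty)$.

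Since Lemma \ref{fp} does nearly all the work, no step is really hard; the only points that demand care are the translation-invariance verification (a one-line computation above) and the book-keeping that ensures the hypotheses of Lemma \ref{fp} are available at every iteration, which is guaranteed by the fact that $\mathcal S(\zeta_{T_n})$ is $r$-absorbing at each concatenation time and strictly decreases, so the induction terminates after finitely many steps.
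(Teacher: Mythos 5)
Your proposal is correct and follows essentially the same route as the paper: iterate Lemma \ref{fp}, use the time-translation invariance of condition {\bf (A)} to restart at each exit time, and terminate because the ($r$-absorbing) support strictly decreases. The only cosmetic difference is that you package the iteration as a strong induction on $|\mathcal S(u)|$ comparing two solutions, whereas the paper runs the iteration directly to pin down the unique path; both hinge on exactly the same ingredients.
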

\begin{proof}
Set $T_0=0$, $v_0=u$, define
\begin{align}
\lambda^{v_0}(i) = \left\{
\begin{array}{cl}
\lambda^{\mathcal S(v_0)}(i), & \text{for $i\in \mathcal S(v_0)$}, \\
0, & \text{otherwise},
\end{array}
\right.
\end{align}
and denote
\begin{align}
T^{v_0}:=\inf\big\{t\ge 0 : v_0 + t \lambda^{v_0} \not \in \Sigma \big\} \quad \text{where} \quad \inf \varnothing = \infty.
\end{align}
By Lemma \ref{fp},
\begin{align}
\zeta_t = v_0 + t\lambda^{v_0}, \quad \text{for all $t\in [0,T^{v_0})$}.
\end{align}
If $T^{v_0}=\infty$, we are done. Otherwise, we set $T_1:=T^{v_0}$, $v_1 := \zeta_{T_1}$, define
\begin{align}
\lambda^{v_1}(i) = \left\{
\begin{array}{cl}
\lambda^{\mathcal S(v_1)}(i), & \text{for $i\in \mathcal S(v_1)$}, \\
0, & \text{otherwise},
\end{array}
\right.
\end{align}
and denote
\begin{align}
T^{v_1}:=\inf\big\{t\ge 0 : v_1 + t \lambda^{v_1} \not \in \Sigma \big\} \quad \text{where} \quad \inf \varnothing = \infty.
\end{align}
Consider the path $\tilde\zeta\in C(\R_+, \Sigma)$ defined by
\begin{align}
\tilde\zeta_t = \zeta_{T_1+t}, \quad t\ge 0,
\end{align}
It is not difficult to check that $\tilde \zeta$ satisfies {\bf (A)}. 
Lemma \ref{fp} assures that $\mathcal S(\tilde\zeta_0)$ is $r$-absorbing. Therefore, we may apply Lemma \ref{fp} to $\tilde \zeta$ and conclude that
\begin{align}
\zeta_{t} = v_1 + (t-T_1)\lambda^{v_1}, \quad \text{for all $t\in [T_1,T_1 + T^{v_1})$}.
\end{align}
If $T^{v_1}=\infty$, we are done. Otherwise, we set $T_2:=T_1+T^{v_1}$, $v_2 := \zeta_{T_2}$ and repeat the same procedure. Since $\mathcal S(v_0) \supsetneq \mathcal S(v_1)\supsetneq \dots$ this procedure must stop. We end up obtaining a finite sequence of pairs
\begin{align}
(T_n,v_n)\in \R_+\times \Sigma, \quad n=0,1,\dots,f,
\end{align}
and vectors
\begin{align}
\lambda^{v_n}(i) = \left\{
\begin{array}{cl}
\lambda^{\mathcal S(v_n)}(i), & \text{for $i\in \mathcal S(v_n)$}, \\
0, & \text{otherwise},
\end{array}
\right.
\end{align}
so that, for each $n<f$,
\begin{align}
\zeta_t=v_n + (t-T_n)\lambda^{v_n}, \quad \text{for $t \in [T_n,T_{n+1})$},
\end{align}
$\lambda^{v_f}$ is null and $\zeta_t = v_f$ for all $t\ge T_f$.
\end{proof}

Notice that at this point we are not allowed to use $\mathcal A(\cdot)$ because we have not proven Lemma \ref{abinter} yet. Nevertheless, Lemma \ref{fp} and the proof of Lemma \ref{pabc1} make clear that

\begin{remark}\label{rem10}
For $u\in \Sigma_{abs}$, the construction of $\zeta^u$ in \eqref{zetaudef} avoids the use of $\mathcal A(\cdot)$ and is in force. Furthermore, $\zeta^u$ is the only possible $\Sigma$-valued continuous path satisfying {\bf (A)} and $\zeta^u_0=u$.
\end{remark}

Our next step is to relate the $(\lambda,\mathcal D_V)$-problem with the ORP.

\subsection{Proof of Proposition \ref{porp}}

Consider the ORP introduced in Definition \ref{c5} and corresponding to the set of transition probabilities $p$, related to $r$ as in \eqref{probr}. For the vector $\lambda$ given in \eqref{ladef} and each $u\in \Sigma$, recall that $(\zeta^{\lambda,u},\rho^{\lambda,u})$ stands for the unique solution of the ORP with input
\begin{align}
\xi^u_t = u + t\lambda, \quad t\ge 0.
\end{align}

\begin{lemma}\label{zlu}
For all $u\in \Sigma$, the path $\zeta^{\lambda,u}$ satisfies {\bf (A)}.
\end{lemma}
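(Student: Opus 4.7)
The strategy is to apply the chain rule to the ORP decomposition of $\zeta^{\lambda,u}$, then exploit the complementarity condition \eqref{zdr} together with the defining property of $\mathcal D_V$ to show that the reflection terms make no contribution.

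First, I would spell out the algebraic link between the reflection directions $\bs w_i$ of \eqref{dor} and the vectors $\bs v_i$ of \eqref{bsi}. From $p(i,j)=r(i,j)/r(i)$ one reads off
\begin{align}
\bs v_i = \sum_{k\in V} r(i,k)(1_{\{k\}}-1_{\{i\}}) = -\,r(i)\,\bs w_i,
\end{align}
with $r(i)>0$ by irreducibility of $r$. Summing over $i\in V$ recovers $\lambda = -\sum_i r(i)\bs w_i = \sum_i \bs v_i$, consistent with \eqref{ladef} and \eqref{lww}. A crucial consequence: if $f\in\mathcal D_V\subseteq\mathcal D_i$, then ${\bs v}_i(f)(u)=0$ whenever $u(i)=0$, and hence ${\bs w}_i(f)(u)=-\tfrac{1}{r(i)}{\bs v}_i(f)(u)=0$ whenever $u(i)=0$.

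Next, from Definition \ref{c5}(ii), $\zeta^{\lambda,u}_t = u + t\lambda + \sum_{i\in V}\rho^{\lambda,u}_t(i)\,\bs w_i$. Since $\rho^{\lambda,u}(i)$ is continuous and nondecreasing and $\zeta^{\lambda,u}$ is continuous (Proposition \ref{cymp}), the path $t\mapsto \zeta^{\lambda,u}_t$ is continuous and of bounded variation on compacts. Writing $\rho=\rho^{\lambda,u}$ and $\zeta=\zeta^{\lambda,u}$ for brevity, and extending any $f\in\mathcal D_V\subseteq C^1(\Sigma,\R)$ to an open neighbourhood of $\Sigma$ in $\R^V$, the chain rule for Lebesgue--Stieltjes integrals yields
\begin{align}
f(\zeta_t)-f(\zeta_0)
=\int_0^t \lambda(f)(\zeta_s)\,ds \;+\; \sum_{i\in V}\int_0^t \bs w_i(f)(\zeta_s)\,d\rho_s(i),\quad t\ge 0.
\end{align}
It remains to kill the last sum. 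By the complementarity condition \eqref{zdr}, $d\rho_s(i)$ is concentrated on $\{s:\zeta_s(i)=0\}$, so
\begin{align}
\int_0^t \bs w_i(f)(\zeta_s)\,d\rho_s(i) = \int_0^t 1_{\{\zeta_s(i)=0\}}\,\bs w_i(f)(\zeta_s)\,d\rho_s(i),
\end{align}
and the integrand vanishes by the observation of the first paragraph. Hence the reflection terms disappear and $\zeta$ satisfies $(\mathbf A)$.

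The only genuinely delicate step is the chain rule application, and I would expect that to be routine once one observes that $\rho$ is continuous of bounded variation (so there are no jump corrections to worry about) and that $f$ is $C^1$ on an open neighbourhood of $\Sigma$ in $\R^V$. Everything else is bookkeeping once the identity $\bs v_i=-r(i)\bs w_i$ is in place, which is precisely why the directions $\bs w_i$ inherit the defining annihilation property of $\mathcal D_i$.
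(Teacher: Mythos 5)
Your proposal is correct and follows essentially the same route as the paper: write $\zeta^{\lambda,u}$ via the ORP decomposition, apply the chain rule for continuous bounded-variation paths, use $\bs v_i=-r(i)\bs w_i$ so that the reflection directions inherit the annihilation property of $\mathcal D_i$, and kill the regulator terms with the complementarity condition \eqref{zdr}. The paper's proof is the same argument phrased in differential form, splitting $d\rho_t(i)$ over $\{\zeta_t(i)=0\}$ and $\{\zeta_t(i)>0\}$ exactly as you do.
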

\begin{proof}
Fix an arbitrary $f\in \mathcal D_V$. To keep notation simple, denote $\zeta=\zeta^{\lambda,u}$ and $\rho=\rho^{\lambda,u}$. By $ii)$ in Definition \ref{c5} we have that each $\zeta(j)$ is a function of bounded variation and
\begin{align}
d\zeta_t(j) = \lambda(j)dt + \sum_{i\in V} \bs w_i(j)d\rho_t(i), \quad \text{for $j\in V$.}
\end{align}
We may now apply the chain rule for bounded variation functions (see e.g. Theorem 3.96 in \cite{ambrosio-etal}) to get
\begin{align}\label{AB4}
df(\zeta_t) \; = \; \lambda(f)(\zeta_t)\,dt+  \sum_{i\in V} {\bs w}_i(f)(\zeta_t)\,d \rho_t(i) \; = \; \lambda (f)(\zeta_t) \,dt - \sum_{i\in V} \frac{{\bs v}_i(f)(\zeta_t)}{r(i)}  \,d \rho_t(i),
\end{align}
where, for the last equality, we have used the relation
\begin{align}
\bs v_i= - r(i) \bs w_i, \quad \text{with} \quad r(i)=\sum_{j\in V}r(i,j), \quad \text{for each $i\in V$}.
\end{align}
From definition of $\mathcal D_V$ and \eqref{zdr}, it follows that
\begin{align}\label{cq1}
\frac{{\bs v}_i(f)(\zeta_t)}{r(i)} 1_{\{ \zeta_t(i)=0 \}} \,d \rho_t(i) = 0 \quad \text{and} \quad \frac{{\bs v}_i(f)(\zeta_t)}{r(i)} 1_{\{ \zeta_t(i)>0 \}} \,d \rho_t(i) = 0,
\end{align}
for each $i\in V$. Putting \eqref{AB4} and \eqref{cq1} together we get the desired result.
\end{proof}

As an immediate consequence of this lemma and Remark \ref{rem10}, we conclude that,

\begin{remark}\label{pud}
If $u\in \Sigma_{abs}$ then $\zeta^{\lambda,u}$ is the unique solution of the $(\lambda,\mathcal D_V)$-problem starting at $u$. Furthermore, $\zeta^{\lambda,u}=\zeta^u$, $\forall u\in \Sigma_{abs}$.
\end{remark}

Recall that we have already proved Lemma \ref{a37} and so Lemma \ref{lproma} is in force at this point.

\begin{lemma}\label{lbsa}
Let $S$ be nonempty and $A$ be $r$-absorbing such that 
\begin{align}\label{bsa}
S\subseteq A \quad \text{and} \quad \lambda^{A}(j)>0, \quad \forall j\in A\setminus S.
\end{align}
For any $u\in \Sigma$, such that $\mathcal S(u)=S$, there exists some $\delta>0$ such that
\begin{align}
\mathcal S(\zeta^{\lambda,u}_t)=A, \quad \forall t\in (0,\delta].
\end{align}
\end{lemma}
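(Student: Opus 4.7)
The plan is to approximate $u$ by vectors in $\Sigma_{abs}$ with full support on $A$, use Remark~\ref{pud} to write the approximating trajectories explicitly on an initial segment, and then pass to the limit via the continuity of the ORP map $\Gamma$ from Proposition~\ref{cymp}. The reason a detour is needed is that $u$ itself typically does not lie in $\Sigma_{abs}$ (since $S$ need not be $r$-absorbing), so the explicit piecewise-linear description \eqref{zetaudef} does not apply to $\zeta^{\lambda,u}$ directly.

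Pick any $q\in\Sigma$ with $\mathcal S(q)=A$ and set $u_\epsilon:=(1-\epsilon)u+\epsilon q$ for $\epsilon\in(0,1)$. Then $\mathcal S(u_\epsilon)=A$, and since $A$ is $r$-absorbing, $u_\epsilon\in\Sigma_{abs}$; from Definition~\ref{def-min-abs} we also get $\mathcal A(u_\epsilon)=\mathcal A(A)=A$, so $\lambda^{u_\epsilon}$ coincides with the vector $\hat\lambda^A$ which equals $\lambda^A$ on $A$ and vanishes off $A$. By Remark~\ref{pud}, $\zeta^{\lambda,u_\epsilon}=\zeta^{u_\epsilon}$, and the construction \eqref{zetaudef} gives
\begin{align}
\zeta^{u_\epsilon}_t \;=\; u_\epsilon + t\,\hat\lambda^A, \qquad t\in[0,T^{u_\epsilon}),
\end{align}
where $T^{u_\epsilon}=\min\{-u_\epsilon(i)/\lambda^A(i):i\in A,\lambda^A(i)<0\}$, with $\min\varnothing=\infty$.

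The hypothesis $\lambda^A(j)>0$ for $j\in A\setminus S$ forces the minimum above to be taken over $i\in S$ with $\lambda^A(i)<0$. On this (possibly empty) set, $u_\epsilon(i)\ge(1-\epsilon)u(i)>0$, hence there exist $T_0>0$ and $\epsilon_0\in(0,1)$ such that $T^{u_\epsilon}\ge T_0$ for all $\epsilon<\epsilon_0$. Since $u_\epsilon\to u$ in $\Sigma$, Proposition~\ref{cymp} gives $\zeta^{\lambda,u_\epsilon}\to\zeta^{\lambda,u}$ uniformly on $[0,T_0]$, and passing to the limit in the explicit formula yields
\begin{align}
\zeta^{\lambda,u}_t \;=\; u + t\,\hat\lambda^A, \qquad t\in[0,T_0].
\end{align}

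For $t>0$ sufficiently small one then reads off $\zeta^{\lambda,u}_t(i)=u(i)+t\lambda^A(i)>0$ for $i\in S$ by continuity, $\zeta^{\lambda,u}_t(j)=t\lambda^A(j)>0$ for $j\in A\setminus S$ by hypothesis, and $\zeta^{\lambda,u}_t(k)=0$ for $k\in V\setminus A$. Choosing $\delta>0$ small enough delivers $\mathcal S(\zeta^{\lambda,u}_t)=A$ for all $t\in(0,\delta]$. The main point where one must be careful is the uniform lower bound on $T^{u_\epsilon}$: this is exactly what fails without the strict positivity assumption on $\lambda^A\!\upharpoonright\!A\setminus S$, and it is the essential mechanism that prevents the approximating trajectories from leaving the segment $u_\epsilon+t\hat\lambda^A$ before $\epsilon$ has a chance to vanish.
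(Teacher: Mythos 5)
Your proof is correct and follows essentially the same strategy as the paper: approximate $u$ by nearby points with support exactly $A$ (hence in $\Sigma_{abs}$), invoke Remark~\ref{pud} to get the explicit linear formula on a time window that is bounded below uniformly in the approximation parameter, and pass to the limit using the continuity of $\Gamma$ from Proposition~\ref{cymp}. The only (cosmetic) difference is the choice of approximants — you take convex combinations $(1-\epsilon)u+\epsilon q$, while the paper slides along the limiting trajectory itself via $u_n=u+s_n\hat\lambda^A$ — and both choices yield the same uniform lower bound on the exit time, which, as you correctly note, is the crux of the argument.
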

\begin{proof}
If $S=A$ then, by Remark \ref{pud}, $\zeta^{\lambda,u}=\zeta^{u}$ and the assertion follows from the construction of $\zeta^{u}$. Suppose now that $S\subsetneq A$. Define
\begin{align}
\hat \lambda^{A}(i) = \left\{
\begin{array}{cl}
\lambda^{A}(i), & \text{for $i\in A$}, \\
0, & \text{otherwise},
\end{array}
\right.
\end{align}
fix some $u\in \Sigma$ such that $\mathcal S(u)=S$ and consider
\begin{align}
T=\inf\{ t\ge 0 : u + t\hat\lambda^A \not \in \Sigma \}.
\end{align}
By \eqref{bsa} and definition of $\hat\lambda^A$, we have $0<T<\infty$. Fix now some $\delta>0$ and a sequence $s_n>0$, $n\ge 1$, so that
\begin{align}\label{hh0}
\delta + s_n < T, \; \forall n\ge 1 \quad \text{and} \quad s_n\downarrow 0. 
\end{align}
Define the sequences $(u_n)$ and $(T^n)$ as
\begin{align}
u_n:=u+s_n\hat\lambda^A \quad \text{and} \quad T^n=\inf\{ t\ge 0 : u_n + t\hat\lambda^A \not \in \Sigma \}.
\end{align}
By \eqref{bsa} and the fact that $s_n<T$, we have $\mathcal S(u_n)=A$. Then, by Remark \ref{pud}, for all $n\ge 1$, 
\begin{align}\label{px1}
\zeta^{\lambda,u_n}_t= u_n + t \hat\lambda^A, \quad \forall t\in [0,T^n].
\end{align}
Due to the fact that $s_n<T<\infty$, we have $T^n=T-s_n$ and so, by \eqref{hh0},
\begin{align}\label{px2}
\delta < T ^n, \quad \text{for all $n\ge 1$.} 
\end{align}
Finally, from \eqref{px2}, \eqref{px1} and the continuity of $\Gamma$, as stated in Proposition \ref{cymp}, we have
\begin{align}
\zeta^{\lambda,u}_{t} = \lim_{n\to \infty} \zeta^{\lambda,u_n}_{t} = u + t\hat \lambda^A, \quad \forall t\in (0,\delta].
\end{align}
By \eqref{bsa}, $\mathcal S(\zeta^{\lambda,u}_{t}) = A$, for all $t\in (0,\delta]$.
\end{proof}

We may now provide the following characterization of the $r$-absorbing sets.

\begin{lemma}\label{lez0}
A nonempty $A\subseteq V$ is $r$-absorbing if, and only if, for all $u\in \partial_A\Sigma$,
\begin{align}\label{ez0}
\zeta^{\lambda,u}_t\in \partial_A\Sigma, \quad \forall t\ge 0.
\end{align}
\end{lemma}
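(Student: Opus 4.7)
The plan is to derive both implications from Lemma~\ref{lbsa}, which describes $\zeta^{\lambda,u}$ on an initial time interval, and Remark~\ref{rem10}, which fixes the evolution once the path has entered $\Sigma_{abs}$. Throughout I write $S := \mathcal{S}(u)$ and $A^* := \mathcal{A}(S)$; by Lemma~\ref{min-abs} and \eqref{proma}, $A^*$ is the unique $r$-absorbing set with $S \subseteq A^*$ and $\lambda^{A^*}(j) > 0$ for all $j \in A^* \setminus S$, so Lemma~\ref{lbsa} always applies to the pair $(S, A^*)$.

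For $(\Rightarrow)$, assume $A$ is $r$-absorbing and fix $u \in \partial_A\Sigma$, so that $S \subseteq A$. Since $A$ is one of the $r$-absorbing sets containing $S$ and $A^*$ is the intersection of all such sets (Definition~\ref{def-min-abs}), $A^* \subseteq A$. Lemma~\ref{lbsa} produces $\delta > 0$ with $\mathcal{S}(\zeta^{\lambda,u}_t) = A^*$ for every $t \in (0,\delta]$, so $\zeta^{\lambda,u}_t \in \partial_{A^*}\Sigma \subseteq \partial_A\Sigma$ throughout $[0,\delta]$. Set $v := \zeta^{\lambda,u}_\delta$; then $\mathcal{S}(v) = A^*$ is $r$-absorbing, i.e.\ $v \in \Sigma_{abs}$. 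The time-shifted path $\tilde\zeta_t := \zeta^{\lambda,u}_{\delta + t}$ is continuous and still satisfies condition~{\bf (A)}: subtracting the integral identity for $\zeta^{\lambda,u}$ (guaranteed by Lemma~\ref{zlu}) at times $\delta$ and $\delta + t$ reproduces exactly the same identity for $\tilde\zeta$ on $[0,t]$. By Remark~\ref{rem10} we therefore have $\tilde\zeta = \zeta^v$, and Lemma~\ref{zuend} applied iteratively to the construction \eqref{zetaudef} yields $\mathcal{S}(\zeta^v_t) \subseteq \mathcal{S}(v) = A^* \subseteq A$ for all $t \ge 0$, which closes this direction.

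For the contrapositive of $(\Leftarrow)$, suppose $A$ is not $r$-absorbing and take $u \in \partial_A\Sigma$ with $\mathcal{S}(u) = A$, e.g.\ the uniform measure on $A$. Then $A^* = \mathcal{A}(A)$ is $r$-absorbing and strictly contains $A$, since otherwise $A = A^*$ would itself be $r$-absorbing. Lemma~\ref{lbsa} furnishes $\delta > 0$ with $\mathcal{S}(\zeta^{\lambda,u}_t) = A^* \supsetneq A$ for $t \in (0,\delta]$; any $k \in A^* \setminus A$ then satisfies $\zeta^{\lambda,u}_t(k) > 0$, so $\sum_{i \in A} \zeta^{\lambda,u}_t(i) < 1$ and $\zeta^{\lambda,u}_t \notin \partial_A\Sigma$, contradicting \eqref{ez0}.

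The only delicate point is extending the $(\Rightarrow)$ conclusion past the initial window $(0,\delta]$: Lemma~\ref{lbsa} is a purely local statement, so the argument must funnel through the instant $\delta$ at which $\zeta^{\lambda,u}$ first sits in $\Sigma_{abs}$ with support $A^*$. Once there, the uniqueness statement of Remark~\ref{rem10} takes over, and the monotonicity of supports along $\zeta^v$ (Lemma~\ref{zuend}) keeps the orbit inside $\partial_A\Sigma$ forever. Everything else amounts to set-theoretic bookkeeping about $\mathcal{A}(S)$ via Definition~\ref{def-min-abs} and Lemma~\ref{min-abs}.
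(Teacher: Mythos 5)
Your overall strategy is workable, but as written the proof is circular within the paper's logical development. You invoke Definition~\ref{def-min-abs} and Lemma~\ref{min-abs} to produce the set $A^*=\mathcal A(S)$ with the properties needed to apply Lemma~\ref{lbsa} (namely that $A^*$ is $r$-absorbing, contains $S$, and has $\lambda^{A^*}(j)>0$ on $A^*\setminus S$). But at the point where Lemma~\ref{lez0} is proved, $\mathcal A(\cdot)$ is not yet available: the fact that the intersection of $r$-absorbing sets is $r$-absorbing is Lemma~\ref{abinter}, whose proof in the paper \emph{uses} Lemma~\ref{lez0}, and Lemma~\ref{min-abs} is in turn proved from Lemma~\ref{lbsa} only after $\mathcal A(\cdot)$ is known to be $r$-absorbing. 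The paper flags this explicitly in Remark~\ref{rem10} (``at this point we are not allowed to use $\mathcal A(\cdot)$''). The same objection applies to your backward direction, where you write $A^*=\mathcal A(A)$. Unless you supply an independent proof of Lemmas~\ref{abinter} and~\ref{min-abs}, the argument begs the question.

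The gap is repairable without changing your structure: Lemma~\ref{lproma} is self-contained (it rests only on Lemma~\ref{a37}), and its algorithm may be initialized at \emph{any} $r$-absorbing set containing $S$. Initializing at $A_1=A$ in the forward direction produces an $r$-absorbing $A^*\subseteq A$ containing $S$ with $\lambda^{A^*}(j)>0$ on $A^*\setminus S$, which is all that Lemma~\ref{lbsa} requires; in the backward direction, Lemma~\ref{lproma} applied to $S=A$ directly yields the strictly larger absorbing set $B$ you need. With that substitution your backward direction coincides with the paper's. Your forward direction is then a genuinely different (and valid) route: the paper instead approximates $u\in\partial_A\Sigma$ by configurations $u_n$ with $\mathcal S(u_n)=A$, applies Remark~\ref{pud} to each $u_n\in\Sigma_{abs}$, and passes to the limit via the continuity of $\Gamma$ in Proposition~\ref{cymp}; you use Lemma~\ref{lbsa} plus the time-shift/uniqueness argument of Remark~\ref{rem10}, which avoids the approximation but leans on the local description of the path near $t=0$. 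Both are sound once the construction of $A^*$ is grounded in Lemma~\ref{lproma}.
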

\begin{proof}
Let us first assume that $A$ is $r$-absorbing and fix some $u\in \partial_A\Sigma$. Denote $S:=\mathcal S(u)$. If $S=A$ then, by Remark \ref{pud}, $\zeta^{\lambda,u}=\zeta^{u}$ and \eqref{ez0} follows from the construction of $\zeta^{u}$. So, we may assume that $S\subsetneq A$ and denote by $|A\setminus S|$ the cardinality of $A\setminus S$. Define a sequence $(u_n)$ as
\begin{align}\label{seun}
u_n(i)=\left\{
\begin{array}{cl}
(1-2^{-n})u(i) & \text{for $i\in S$}, \\
(2^n|A\setminus S|)^{-1} & \text{for $i\in A\setminus S$}, \\
0 & \text{for $i\in V\setminus A$},
\end{array}
\right.
\end{align}
so that $\mathcal S(u_n)=A$, $\forall n\ge 1$, and $u_n\to u$. As we have already noticed, for all $n\ge 1$, we have
\begin{align}\label{aaq1}
\zeta^{\lambda,u_n}_t= \zeta^{u_n}_t\in \partial_A\Sigma, \quad \forall t\ge 0.
\end{align}
By letting $n\to\infty$ in \eqref{aaq1} and using the continuity of $\Gamma$ as stated in Proposition \ref{cymp}, we get \eqref{ez0}.

Assume now that $A$ is not $r$-absorbing. By Lemma \ref{lproma}, there exists an $r$-absorbing set $B$ such that
\begin{align}
A\subsetneq B \quad \text{and} \quad \lambda^{B}(j)>0, \quad \forall j\in B\setminus A.
\end{align}
Take some $u\in \Sigma$, such that $\mathcal S(u)=A$. By Lemma \ref{lbsa}, there exists some $\delta>0$ such that $\mathcal S(\zeta^{\lambda,u}_{\delta})=B$ contradicting \eqref{ez0}. We are done.
\end{proof}

From this, it is now clear that the intersection of two $r$-absorbing subsets is still $r$-absorbing.

\begin{proof}[Proof of Lemma \ref{abinter}]
Let $A$ and $B$ be $r$-absorbing subsets. Since $\varnothing$ is considered $r$-absorbing, we may assume that $S=A\cap B$ is nonempty. Fix an arbitrary $u\in \partial_S\Sigma = \partial_A\Sigma \cap \partial_B\Sigma$. By Lemma \ref{lez0} applied to subsets $A$ and $B$ we have
\begin{align}
\zeta^{\lambda,u}_t\in \partial_S\Sigma, \quad \forall t\ge 0.
\end{align}
Finally, Lemma \ref{lez0} applied to $S$ assures that $S$ is $r$-absorbing.
\end{proof}

We have finally the notion of $\mathcal A(\cdot)$ in force. Let us prove its characterization as stated in Lemma \ref{min-abs}.

\begin{proof}[Proof of Lemma \ref{min-abs}]
Fix a nonempty $S\subseteq V$. By Lemma \ref{a37}, $\mathcal A(S)$ is an $r$-absorbing set satisfying
\begin{align}
S\subseteq \mathcal A(S) \quad \text{and} \quad \lambda^{\mathcal{A}(S)}(j)>0, \quad \forall j\in \mathcal A(S)\setminus S.
\end{align}
Let $A$ be an $r$-absorbing set satisfying \eqref{bsa}. Fix some $u\in \Sigma$ such that $\mathcal S(u)=S$. According to Lemma \ref{lbsa}, there exists some $\delta>0$ such that 
\begin{align}
A = \mathcal S(\zeta^{\lambda,u}_{\delta}) = \mathcal A(S).
\end{align}  
We are done.
\end{proof}

We have finally collected all the results used in Subsection \ref{FLimit} for the construction of
\begin{align}\label{zzu}
\zeta^u\in C(\R_+,\Sigma), \quad \text{for all $u\in \Sigma$}.
\end{align}
We are now ready to prove that $\zeta^u=\zeta^{\lambda,u}$, for all $u\in \Sigma$.

\begin{proof}[Proof of Proposition \ref{porp}]
Fix an arbitrary $u\in \Sigma$, a sequence $s_n>0$, $n\ge 1$, such that $s_n\downarrow 0$, and denote $u_n:=\zeta^u_{s_n}$, $n\ge 1$. By property \eqref{fpr},
\begin{align}\label{dd1}
\zeta^{u_n}_t=\zeta^u_{s_n + t}, \quad \text{for all $t\ge 0$ and $n\ge 1$}.
\end{align}
As we have already noticed in \eqref{abst}, we have $u_n\in \Sigma_{abs}$ and then, by Remark \ref{pud},
\begin{align}\label{dd2}
\zeta^{\lambda,u_n} = \zeta^{u_n}, \quad \forall n\ge 1.
\end{align}
By using \eqref{dd1}, \eqref{dd2} and the continuity of $\Gamma$ as stated in Proposition \ref{cymp}, we get
\begin{align}
\zeta^u_t = \lim_{n\to \infty} \zeta^{u_n}_t = \lim_{n\to \infty} \zeta^{\lambda,u_n}_t = \zeta^{\lambda,u}_t,
\end{align}
for any $t\ge 0$. We are done.
\end{proof}

\subsection{Well-posedness of the $(\lambda,\mathcal D_V)$-problem}

We may finally prove the main result of this section. The uniqueness part of Proposition \ref{promain} below is crucial in our approach to get the fluid limit for the zero-range process.

\begin{proposition}\label{promain}
For any $u\in \Sigma$, the unique solution of the $(\lambda,\mathcal D_V)$-problem that starts at $u$ is $\zeta^{\lambda,u}=\zeta^u$.
\end{proposition}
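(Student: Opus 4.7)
My plan is to split the proposition into existence and uniqueness, and in both cases reduce to the $\Sigma_{abs}$ case that has already been fully handled (via Remark~\ref{pud} and Lemma~\ref{pabc1}).

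For existence, I would simply verify that $\zeta^{\lambda,u}$ satisfies the two conditions in Definition~\ref{propab}. Condition \textbf{(A)} is exactly Lemma~\ref{zlu}. For condition \textbf{(B)}, note that by Lemma~\ref{lbsa} applied with $S=\mathcal S(u)$ and $A=\mathcal A(S)$ (which is $r$-absorbing and satisfies \eqref{bsa} by Lemma~\ref{min-abs}), there exists $\delta>0$ such that $\mathcal S(\zeta^{\lambda,u}_t)=\mathcal A(S)$ for all $t\in(0,\delta]$. Since $\mathcal A(S)$ is $r$-absorbing, $\zeta^{\lambda,u}_t\in\Sigma_{abs}$ for every such $t$, so $\inf\{t\ge 0:\zeta^{\lambda,u}_t\in\Sigma_{abs}\}=0$. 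Finally, $\zeta^{\lambda,u}=\zeta^u$ is Proposition~\ref{porp}.

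For uniqueness, let $\zeta\in C(\R_+,\Sigma)$ be any solution of the $(\lambda,\mathcal D_V)$-problem with $\zeta_0=u$. By condition \textbf{(B)} pick a sequence $s_n\downarrow 0$ with $\zeta_{s_n}\in\Sigma_{abs}$, and consider the time-shifted path $\tilde\zeta^{(n)}_t:=\zeta_{s_n+t}$. A direct change of variables in condition \textbf{(A)} shows that $\tilde\zeta^{(n)}$ satisfies \textbf{(A)} as well:
\begin{align}
f(\tilde\zeta^{(n)}_t)
=f(\zeta_{s_n})+\int_{s_n}^{s_n+t}\lambda(f)(\zeta_s)\,ds
=f(\tilde\zeta^{(n)}_0)+\int_0^t\lambda(f)(\tilde\zeta^{(n)}_s)\,ds,
\end{align}
for every $f\in\mathcal D_V$. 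Since $\tilde\zeta^{(n)}_0=\zeta_{s_n}\in\Sigma_{abs}$, Remark~\ref{pud} (combined with Lemma~\ref{pabc1}) identifies $\tilde\zeta^{(n)}=\zeta^{\lambda,\zeta_{s_n}}$.

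It remains to pass to the limit $n\to\infty$. By continuity of $\zeta$ we have $\zeta_{s_n}\to u$, and by the continuity of the Skorokhod map $\Gamma$ in Proposition~\ref{cymp}, $\zeta^{\lambda,\zeta_{s_n}}\to\zeta^{\lambda,u}$ uniformly on compact subsets of $\R_+$. On the other hand, for each fixed $t\ge 0$, continuity of $\zeta$ yields $\tilde\zeta^{(n)}_t=\zeta_{s_n+t}\to\zeta_t$. Therefore $\zeta_t=\zeta^{\lambda,u}_t$ for every $t\ge 0$, which together with Proposition~\ref{porp} gives $\zeta=\zeta^{\lambda,u}=\zeta^u$ and concludes the proof.

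The only potentially delicate point is the translation-invariance of condition \textbf{(A)}, but this is immediate from its integral form; all the genuine work has already been absorbed into Lemma~\ref{pabc1}, Lemma~\ref{lbsa}, and the continuity statement of Proposition~\ref{cymp}, so no additional machinery is needed.
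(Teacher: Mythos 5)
Your proof is correct and follows essentially the same route as the paper: uniqueness via time-shifting to $\Sigma_{abs}$, Remark~\ref{pud}, and the continuity of $\Gamma$ from Proposition~\ref{cymp}; existence via Lemma~\ref{zlu} plus a verification of \textbf{(B)}. The only (immaterial) difference is that you verify \textbf{(B)} by invoking Lemma~\ref{lbsa} with $A=\mathcal A(\mathcal S(u))$, whereas the paper deduces it from the already-proved identity $\zeta^{\lambda,u}=\zeta^u$ together with \eqref{abst}; both are valid one-line consequences of results established earlier.
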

\begin{proof}
Suppose that $\zeta\in C(\R_+,\Sigma)$ solves the $(\lambda,\mathcal D_V)$-problem and $\zeta_0=u$. According to Remark \ref{pud}, if $u\in \Sigma_{abs}$ then $\zeta=\zeta^u= \zeta^{\lambda,u}$. Assume then that $u\in \Sigma \setminus \Sigma_{abs}$. According to {\bf (B)}, there exists a sequence $s_n\downarrow 0$ so that
\begin{align}\label{zs1}
u_n:=\zeta_{s_n} \in \Sigma_{abs}, \quad \text{for all $n\ge 1$}.
\end{align}
Consider the sequence of paths
\begin{align}
\zeta^{(n)}_t:=\zeta_{s_n+t}, \quad t\ge 0.
\end{align}
It is simple to verify that each $\zeta^{(n)}$ satisfies {\bf (A)}. Therefore, by \eqref{zs1} and Remark \ref{pud} we have $\zeta^{(n)} = \zeta^{\lambda, u_n}$, $\forall n\ge 1$, and so
\begin{align}
\zeta_t = \lim_{n\to \infty} \zeta^{(n)}_{t} = \lim_{n\to \infty} \zeta^{\lambda,u_n}_{t} = \zeta^{\lambda,u}_t, \quad \forall t\ge 0.
\end{align}
In the last equality, we have used the continuity of $\Gamma$ as stated in Proposition \ref{cymp}. We have thus proved uniqueness for the $(\lambda,\mathcal D_V)$-problem. By Lemma \ref{zlu}, it remains to show that $\zeta^{\lambda,u}$ satisfies {\bf (B)}, when $u\in \Sigma\setminus \Sigma_{abs}$. But this follows immediately from the fact that $\zeta^{\lambda,u}=\zeta^u$, Proposition \ref{porp}, and \eqref{abst}.
\end{proof}

\subsection{Remaining results}

We finish this section by proving Lemma \ref{lorp} and Proposition \ref{absconf}, which complete the description of the fluid limit $\zeta^u$, $u\in \Sigma$. 

\begin{proof}[Proof of Lemma \ref{lorp}]
Fix an irreducible set of transition probabilities $p$ on $V$ satisfying $p(i,i)=0$, $\forall i\in V$, and recall the definition of the set of vectors $\bs w_i$, $i\in V$, given in \eqref{dor}. Let $\mu(i)$, $i\in V$ be the invariant distribution corresponding to $p$ so that
\begin{align}\label{wwii}
\sum_{i \in V} \mu(i) \bs w_i = 0
\end{align}
Consider the subspace orthogonal to the constant $1\in \R^V$:
\begin{align}
{\bs 1}^{\perp} := \{ v \in \R^V : \langle 1,v \rangle = 0 \}
\end{align}
and the closed convex cone
\begin{align}
\mathcal C := \Big\{ \sum_{i\in V} a_i \bs w_i : a_i\ge 0, \forall i\in V \Big\} \subseteq {\bs 1}^{\perp}.
\end{align}
The polar cone of $\mathcal C$ respect to ${\bs 1}^{\perp}$ is
\begin{align}
\mathcal C^*:= \big\{ v\in {\bs 1}^{\perp} : \langle v,u \rangle\le 0, \forall u\in \mathcal C \big\}.
\end{align}
If $v\in \mathcal C^*$ then
\begin{align}
\mu(i) \langle \bs w_i , v \rangle \le 0, \quad \forall i\in V.
\end{align}
From this fact and \eqref{wwii} it follows that $\langle \bs w_i , v \rangle = 0,\, i\in V$, and therefore
\begin{align}
v(i)=\sum_{j\in V} p(i,j) v(j).
\end{align}
Since $p$ is irreducible then $v$ is constant. But $v\in {\bs 1}^{\perp}$ and so $v$ is null. We have proved that $\mathcal C^*=\{0\}$, which in turn implies that $\mathcal C = {\bs 1}^{\perp}$. Therefore, there exists some $a_i$, $i\in V$, such that
\begin{align}
- \lambda =  \sum_{i\in V} a_i {\bs w}_i = \sum_{i\in V} ( a_i + s \mu(i)){\bs w}_i, \quad \forall s\in \R,
\end{align}
where the second identity holds by \eqref{wwii}.
By choosing a sufficiently large $s>0$, we get $r(i):=a_i + s \mu(i)>0$, $\forall i\in V$ as desired.
\end{proof}

We now aim to prove Proposition \ref{absconf}. Let $\mu(i)$, $i\in V$ be the invariant distribution corresponding to the transition rates $r$ and denote 
\begin{align}
{\mathcal M}:=\Big\{i\in V:\, \mu(i)=\max_{k\in V}\mu(k)\Big\}.
\end{align}
Since $r$ is irreducible then, for all nonempty $B\subseteq V$, the trace $r^{B}$ is also irreducible and $\mu(i)$, $i\in V$, is an invariant measure for $r^B$, that is
\begin{align}
\sum_{i\in B} \mu(i)r^B(i,j)=\mu(j)\sum_{i\in B}r^B(j,i), \quad \forall j\in B.
\end{align}
Therefore,
\begin{align}\label{stop1}
\lambda^B(j)=\sum_{i\in B}\big[r^B(i,j) - r^B(j,i)\big]=\sum_{i\in B}\Big(1 - \frac{\mu(i)}{\mu(j)}\Big)r^B(i,j), \quad \forall j\in B.
\end{align}
The following lemma is a straightforward consequence of \eqref{stop1}.
\begin{lemma}\label{stoptech}
For any nonempty $B\subseteq V$ we have
\begin{align}
\text{$\lambda^B$ is null} \quad \iff \quad \text{$\mu$ is constant on $B$}.
\end{align}
\end{lemma}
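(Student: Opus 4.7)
The plan is to read everything off the identity \eqref{stop1}, which after Lemma~\ref{stoptech} is being cited as ``straightforward.'' The forward implication is trivial: if $\mu$ is constant on $B$, then for every $j\in B$ each factor $1-\mu(i)/\mu(j)$ in \eqref{stop1} vanishes for $i\in B$, so $\lambda^{B}(j)=0$.

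For the reverse implication I would argue by examining the sites of maximal $\mu$-value inside $B$. Set $M:=\max_{i\in B}\mu(i)$ and $C:=\{j\in B:\mu(j)=M\}$; the goal is to show $C=B$. Fix any $j\in C$. From \eqref{stop1} applied at $j$,
\begin{align}
0=\lambda^{B}(j)=\sum_{i\in B}\Big(1-\frac{\mu(i)}{M}\Big)r^{B}(i,j),
\end{align}
and every summand is nonnegative because $\mu(i)\le M$ and $r^{B}(i,j)\ge 0$. Hence each summand vanishes, so for every $i\in B\setminus C$ (where $\mu(i)<M$) we must have $r^{B}(i,j)=0$. In other words, there is no $r^{B}$-transition from any vertex outside $C$ into any vertex of $C$.

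The final step is an irreducibility argument. Since $r^{B}$ is irreducible on $B$, if $C\subsetneq B$ then for $i\in B\setminus C$ and $j\in C$ there exists a path $i=i_{0}\to i_{1}\to\cdots\to i_{n}=j$ in $B$ along edges of positive $r^{B}$-rate. Reading the last edge of this path backwards, $r^{B}(i_{n-1},j)>0$ forces $i_{n-1}\in C$ by what we proved above; iterating, $i_{n-2},\dots,i_{0}\in C$, contradicting $i_{0}\in B\setminus C$. Therefore $C=B$ and $\mu$ is constant on $B$.

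The only subtlety is that one must invoke the irreducibility of the trace $r^{B}$ (stated right before the lemma) rather than of $r$ itself; this is what allows the ``propagate along a path'' step to stay inside $B$. Otherwise the argument is just maximum-principle bookkeeping on the formula \eqref{stop1}.
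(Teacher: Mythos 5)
Your proof is correct and follows essentially the same route as the paper: both read the claim off identity \eqref{stop1}, observe that at a site of maximal $\mu$-value all summands are nonnegative so each must vanish, and then invoke irreducibility of the trace $r^{B}$ to rule out $C\subsetneq B$. If anything, your backward-propagation along a path is slightly more careful than the paper's phrasing, which picks an arbitrary $i_0\in B\setminus\mathcal M_B$ and asserts a direct edge into $\mathcal M_B$.
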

\begin{proof}
Assume that $\lambda^B$ is null. Therefore, for each $j\in B$,
\begin{align}\label{lie1}
\sum_{i\in B}\Big(1 - \frac{\mu(i)}{\mu(j)}\Big)r^B(i,j) = 0
\end{align}
Denote
\begin{align}
{\mathcal M}_B:=\{j\in B:\,\mu(j)=\max_{i\in B}\mu(i)\}.
\end{align}
Suppose that $\mathcal M_B \subsetneq B$. Fix some $i_0\in B\setminus \mathcal M_B$. Since $r^B$ is irreducible, 
\begin{align}\label{lie2}
r^B(i_0, j_0)>0, \quad \text{for some $j_0\in \mathcal M_B$}.
\end{align}
Since 
\begin{align}
\Big(1 - \frac{\mu(i)}{\mu(j_0)}\Big)r^B(i,j_0) \ge 0, \quad \forall i\in B,
\end{align}
then, from \eqref{lie1} and \eqref{lie2}, it follows that $\mu(i_0)=\mu(j_0)$, which contradicts $i_0\in B\setminus \mathcal M_B$. Therefore $\mu$ is constant on $B$. The converse follows immediately from \eqref{stop1}.
\end{proof}

\begin{proof}[Proof of Proposition \ref{absconf}]
We first prove that $a)$ and $b)$ are equivalent. Assume that $a)$ holds. By Lemma \ref{lez0}, $A$ is $r$-absorbing. Besides, by construction of $\zeta^u$, $\lambda^A$ must be null. That proves $a) \Rightarrow b)$. $b) \Rightarrow a)$ is obvious from the construction of $\zeta^u$. Now, let us prove that $b)$ and $c)$ are equivalent. Assuming $b)$, by Lemma \ref{stoptech}, $\mu$ is constant on $A$. To conclude $c)$, it remains to show that $A\cap \mathcal M$ is nonempty. Suppose that $A\cap \mathcal M = \varnothing$ and fix some $j\in \mathcal M$. Then
\begin{align}\label{muij}
1-\frac{\mu(i)}{\mu(j)}>0, \;\forall i\in A.
\end{align}
Since $r^{A\cup\{j\}}$ is irreducible, there exists some $i\in A$ such that $r^{A\cup\{j\}}(i,j)>0$. From this fact, \eqref{muij} and \eqref{stop1}, we have that
\begin{align}\label{jhh}
\lambda^{A\cup\{j\}}(j) = \sum_{i\in A} \left( 1 - \frac{\mu(i)}{\mu(j)} \right) r^{A\cup\{j\}}(i,j)
\end{align}
turns out to be strictly positive, contradicting that $A$ is $r$-absorbing. Hence $b)\Rightarrow c)$. Finally assume that $c)$ holds. By Lemma \ref{stoptech}, $\lambda^A$ is null. For $j\in V\setminus A$, we may apply \eqref{jhh} and the fact that $\mu(j)\le \mu(i)$, $\forall i\in A$, to conclude that
\begin{align}
\lambda^{A\cup\{j\}}(j) \le 0.
\end{align}
Thus $A$ is $r$-absorbing. By the fact that $\lambda^{A}(i)=0$, we conclude that $\zeta_t^u=u,\ t\geq 0$. We are done.
\end{proof}

\section{Fluid limit for the zero-range process}\label{s2}

We will denote by $L_N$ the generator of the Markov process $\zeta^N$, given in \eqref{zeta}, which acts on a test function $f:\Sigma_N\to \R$ by
\begin{align}\label{L-N}
L_N f(u)=N\sum_{i,j \in V} g(Nu(i))\,r(i,j)\,\big[ f\big(u+\frac{1_{\{j\}}-1_{\{i\}}}{N}\big)-f(u)\big],\quad u \in \Sigma_N.
\end{align}

\subsection{Tightness}\label{tight}

\begin{proof}[Proof of Proposition \ref{tight2}]
Since $\Sigma_N$ is finite, for all $N\ge 1$, then, in order to prove Proposition \ref{tight2}, it is enough to consider an arbitrary sequence $u_N\in \Sigma_N$, $N\ge 1$ and show that 
\begin{align}\label{talt}
\text{$(\P^N_{u_N})_{N\in \N}$ is tight and every limit point is supported on $C(\R_+,\Sigma)$}.
\end{align}
Since $\Sigma$ is compact, by Theorems 13.2 and 13.4 in \cite{billing-conv}, assertion \eqref{talt} follows from,
\begin{equation}\label{s20}
\lim_{\delta\rightarrow 0}\limsup_{N\rightarrow \infty}P\big[\sup_{\substack{s,\,t<T, \\ |s-t|<\delta}}\|\zeta^N_t-\zeta^N_s\|>\epsilon\big]=0, \quad \text{for all $\epsilon>0$ and $T>0$,}
\end{equation}
where $\zeta^N$ is starting at $u_N$. Let us denote $f_j(u)=u(j)$, $j\in V$, $u\in \Sigma$. For each $j\in V$, we have
\begin{align}\label{h3}  
\zeta_t^N(j)=\zeta_0^N(j)+\int_0^t L_Nf_j(\zeta_r)\,dr+M^{N,j}_t,\quad t\ge 0,
\end{align} 
where $M^{N,j}$ is a martingale with respect to the filtration generated by $\zeta^N$. By using \eqref{h3}, we can bound the probability in \eqref{s20} by 
\begin{equation}\label{s21}
\sum_{j\in V}P\Big[\sup_{\substack{s,\,t<T, \\ |s-t|<\delta}}\big|\int_s^t L_Nf_j(\zeta_r)\,dr\big|\geq\frac{\epsilon}{2 |V|}\Big]+\sum_{j\in V} 
P\Big[\sup_{0\leq s\leq T}|M^{N,j}_{s}|\geq\dfrac{\epsilon}{4|V|}\Big],
\end{equation}
where $|V|$ denotes the cardinality of $V$. Now, it is clear that
\begin{equation*}\label{e1}
L_N f_j(u)=\sum_{i\in V}\Big[g(Nu(i))\,r(i,j) - g(Nu(j))r(j,i) \Big]
\end{equation*} 
is bounded and so, the first term in \eqref{s21} vanishes as $\delta\to 0$. On the other hand, by Doob's and Chebyshev's inequalities, the probability in the second term of \eqref{s21} is bounded by
\begin{align}\label{s22}
\frac{2^6\,|V|^2}{\epsilon^2} E\big[(M^{N,j}_{T})^2\big] = \frac{2^6\,|V|^2}{\epsilon^2} E\Big[\int_{0}^{T}\big(L_N (f_j)^2 (\zeta^N_s)-2\zeta^N_s(j) L_N f_j(\zeta^N_s)\big)ds\Big].
\end{align}
The above equality is shown, for instance, in \cite{KL}, Appendix 1.5. An elementary computation shows that 
\begin{align}\label{s222}
L_N (f_j)^2 (u)-2 u(j) L_N f_j(u) \le \frac{C}{N}, \quad \forall u\in \Sigma_N,
\end{align} 
for a positive constant $C$. Therefore, by \eqref{s22} and \eqref{s222} the second term in \eqref{s21} vanishes as $N\to \infty$.
\end{proof}

\subsection{Convergence of $L_N$ on $\mathcal D_V$}

\begin{lemma}\label{aux}
For any $f\in {\mathcal D}_V$ we have
\begin{align*}
\lim_{N\to\infty}\sup_{u\in\Sigma_N}\big|L_N f(u)-\lambda(f)(u)\big| =0.
\end{align*}
\end{lemma}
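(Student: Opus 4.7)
The plan is to perform a Taylor expansion of $L_N f$ up to first order and identify the error terms. Since $f \in C^1(\Sigma,\R)$ and $\Sigma$ is compact, its gradient is uniformly continuous, so there is a modulus of continuity $\omega(\delta)\to 0$ with
\begin{align}
N\bigl[f\bigl(u+\tfrac{1_{\{j\}}-1_{\{i\}}}{N}\bigr)-f(u)\bigr]\;=\;\nabla f(u)\cdot(1_{\{j\}}-1_{\{i\}}) + R^N_{i,j}(u),
\end{align}
where $|R^N_{i,j}(u)|\le \sqrt{2}\,\omega(\sqrt{2}/N)$ uniformly over $u\in\Sigma_N$ and $i,j\in V$. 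Note also that $g$ is bounded: the hypothesis $g(n)\to 1$ forces $\sup_n g(n)<\infty$. Multiplying by $g(Nu(i))\,r(i,j)$ and summing, the remainder term contributes at most $|V|^2\|r\|_\infty\|g\|_\infty\sqrt 2\,\omega(\sqrt 2/N)$ and thus vanishes uniformly, leaving
\begin{align}
L_N f(u) \;=\; \sum_{i\in V} g(Nu(i))\,{\boldsymbol v}_i(f)(u)+o_N(1),
\end{align}
with $o_N(1)$ uniform in $u$. Since $\lambda(f)=\sum_i {\boldsymbol v}_i(f)$, the result reduces to showing that
\begin{align}
\sup_{u\in\Sigma_N}\Bigl|\sum_{i\in V}\bigl(g(Nu(i))-1\bigr)\,{\boldsymbol v}_i(f)(u)\Bigr|\;\longrightarrow\;0.
\end{align}

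The core step is to exploit the defining property of ${\mathcal D}_V$: for each $i\in V$, the continuous function ${\boldsymbol v}_i(f)$ vanishes on the face $\{u\in\Sigma:u(i)=0\}$. Given $\epsilon>0$, compactness of $\Sigma$ and uniform continuity of ${\boldsymbol v}_i(f)$ yield a $\delta>0$ such that $|{\boldsymbol v}_i(f)(u)|\le\epsilon$ whenever $u(i)\le\delta$. I would then split the sum into the sites with $u(i)\le\delta$ and those with $u(i)>\delta$. On the first set, $|g(Nu(i))-1|$ is bounded by $\|g\|_\infty+1$ and the factor ${\boldsymbol v}_i(f)(u)$ is at most $\epsilon$, contributing at most $|V|(\|g\|_\infty+1)\epsilon$. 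On the second set, $Nu(i)>N\delta\to\infty$, so by \eqref{rate-f} we have $|g(Nu(i))-1|\to 0$ uniformly over such $u$, making this piece negligible for $N$ large. Sending $\epsilon\to 0$ afterwards concludes.

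The main (minor) obstacle is the boundary layer $0<u(i)\lesssim \delta$: exactly where $g(Nu(i))$ fails to approximate $1$. This is overcome by the martingale-style choice of test function class ${\mathcal D}_V$: the condition ${\boldsymbol v}_i(f)\equiv 0$ on $\{u(i)=0\}$ is precisely what is needed to cancel the defect of $g$ near the boundary. No assumption on the behavior of $g$ near zero (beyond boundedness) is required, nor the lower bound \eqref{rate-fbis}; the proof uses only \eqref{rate-f}, consistently with its use only in the coupling argument of Section~\ref{coupling}.
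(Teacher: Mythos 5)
Your proposal is correct and follows essentially the same route as the paper: a first‑order Taylor expansion with a uniformly vanishing remainder, followed by the reduction to $\sum_i (g(Nu(i))-1)\,{\boldsymbol v}_i(f)(u)$ and the dichotomy $u(i)\le\delta$ (where ${\boldsymbol v}_i(f)$ is small by the $\mathcal D_V$ boundary condition and uniform continuity) versus $u(i)>\delta$ (where $g(Nu(i))\to 1$ uniformly). Your closing remarks about which hypotheses are actually used are also consistent with the paper's argument.
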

\begin{proof}
Fix $f\in {\mathcal D}_V$. For each $u\in \Sigma_N$, we approximate $f$ by its first-order Taylor polynomial to get
\begin{align}
L_N f(u) = N\sum_{j,\,k\in V} g(Nu(j))\,r(j,k)\,\big[\big\langle\nabla f(u),\frac{1_{\{k\}}-1_{\{j\}}}{N}\big\rangle + \frac{R^N_{j,k}(u)}{N}\big]
\end{align}
where
\begin{align}\label{Tres}
R^N_{j,k}(u) = \langle \nabla f(v)-\nabla f(u), 1_{\{k\}}-1_{\{j\}}\rangle,
\end{align}
for some $v$, satisfying $\|v-u\|\le\frac2N$. Since $\nabla f$ is continuous on the compact set $\Sigma$,
\begin{align}\label{can0}
\lim_{N\to \infty} \sup_{u\in \Sigma} |R^N_{j,k}(u)| = 0, \quad \text{for each $j,k\in V$.}
\end{align}
Denote 
\begin{align}
R^N(u) := \sum_{j,\,k\in V}g(Nu(j))\,r(j,k)\,R^N_{j,k}(u)
\end{align}
so that
\begin{align}\label{can1}
L_N f(u) - \lambda(f)(u) = \sum_{j\in V} (g(Nu(j))-1)\bs v_j(f)(u) + R^N(u).
\end{align}
Let $M_1$ and $M_2$ be positive constants such that 
\begin{align}\label{def1}
\max_{j\in V}\sup_{u \in \Sigma}|\bs v_j(f)(u)|<M_1 \quad \text{and} \quad \sup_{n\in \N_0} |g(n) -1|<M_2.
\end{align}
For an arbitrary $\epsilon>0$, there exists some $\delta>0$ such that, for all $j\in V$,
\begin{align}\label{def2}
|u(j)|<\delta \quad \implies \quad | \bs v_j(f)(u) | < \frac{\epsilon}{M_2},
\end{align}
and there exists some $N_0\in \N$ such that 
\begin{align}\label{def3}
n\ge N_0 \delta \quad \implies \quad |g(n)-1| < \frac{\epsilon}{M_1}.
\end{align}
From \eqref{def1}, \eqref{def2} and \eqref{def3}, it follows that
\begin{align}\label{can2}
\lim_{N\to \infty} \sup_{u \in \Sigma} |g(Nu(j))-1||\bs v_j(f)(u)| = 0, \quad \forall j\in V.
\end{align}
Finally, by using \eqref{can2} and \eqref{can0} in equation \eqref{can1} we obtain the desired result.
\end{proof}

\subsection{Fluid limits satisfy condition (A)}\label{mgle-prblm}

Let us fix a sequence $u_N$, $N\ge 1$, so that $u_n\to u$. In virtue of Proposition \ref{tight2}, we may assume that 
\begin{align}
\text{$\P^N_{u_N}$ converges weakly to some probability $\P$ in $C(\R_+,\Sigma)$.}
\end{align}
Thanks to the Skorokhod representation theorem, there exists a sequence of random paths $\zeta^N$, $N\ge 1$, and $\zeta$ so that
\begin{align}
\zeta^N \sim \P^N_{u_N}, \quad  \zeta\sim \P
\end{align}
and $\zeta^N$ converges to $\zeta$ in the Skorokhod topology, almost surely. But, since $\zeta$ is almost surely continuous, then (see, for instance, Proposition 1.17, \S~Vl in \cite{jacod-shiryaev})
\begin{align}\label{uocz}
\text{ $\zeta^N$ converges to $\zeta$ uniformly in compact subsets of $\R_+$},
\end{align}
with probability one.

\begin{proposition}\label{conv} 
The random path $\zeta$ satisfies condition {\bf (A)}, almost surely.
\end{proposition}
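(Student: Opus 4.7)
The plan is to use the standard Dynkin martingale approach together with Lemma~\ref{aux}. Fix $f\in \mathcal{D}_V$. For each $N\ge 1$,
\begin{align}
M^{N,f}_t := f(\zeta^N_t) - f(\zeta^N_0) - \int_0^t L_N f(\zeta^N_s)\,ds,\quad t\ge 0,
\end{align}
is a martingale with respect to the natural filtration of $\zeta^N$, with predictable quadratic variation
\begin{align}
\langle M^{N,f}\rangle_t = \int_0^t \bigl(L_N f^2(\zeta^N_s) - 2 f(\zeta^N_s)\,L_N f(\zeta^N_s)\bigr)\,ds.
\end{align}
The goal is to pass to the limit in the identity $M^{N,f}_t = f(\zeta^N_t) - f(\zeta^N_0) - \int_0^t L_N f(\zeta^N_s)\,ds$.

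First I would control the martingale term. A direct computation of the integrand in $\langle M^{N,f}\rangle_t$, analogous to the one behind \eqref{s222}, uses that increments $f(u+(1_{\{k\}}-1_{\{j\}})/N)-f(u)$ are $O(1/N)$ uniformly on $\Sigma$ (since $\nabla f$ is bounded on a neighbourhood of the compact $\Sigma$), while the jump rates in $L_N$ carry a prefactor $N$. Combined with the uniform bound on $g$, this yields $L_N f^2(u) - 2 f(u) L_N f(u) \le C/N$ for all $u\in \Sigma_N$, hence $\E[\langle M^{N,f}\rangle_T]\le CT/N$. Doob's $L^2$ inequality then gives, for every $T>0$ and $\epsilon>0$,
\begin{align}
\P\Bigl[\sup_{0\le t\le T}|M^{N,f}_t|>\epsilon\Bigr] \le \frac{4}{\epsilon^2}\,\frac{CT}{N} \xrightarrow[N\to\infty]{} 0.
\end{align}

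Next I would handle the drift term. By Lemma~\ref{aux}, $\sup_{u\in \Sigma_N}|L_N f(u) - \lambda(f)(u)|\to 0$, so uniformly for $t$ in a compact interval,
\begin{align}
\Bigl|\int_0^t L_N f(\zeta^N_s)\,ds - \int_0^t \lambda(f)(\zeta^N_s)\,ds\Bigr| \le t\,\sup_{u\in \Sigma_N}|L_N f(u)-\lambda(f)(u)| \xrightarrow[N\to\infty]{} 0.
\end{align}
On the other hand, the Skorokhod coupling \eqref{uocz} gives $\zeta^N\to \zeta$ uniformly on compacts almost surely, and $\lambda(f)$ and $f$ are continuous on $\Sigma$. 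Therefore $f(\zeta^N_t)\to f(\zeta_t)$ uniformly on $[0,T]$ and, by dominated convergence,
\begin{align}
\int_0^t \lambda(f)(\zeta^N_s)\,ds \xrightarrow[N\to\infty]{} \int_0^t \lambda(f)(\zeta_s)\,ds
\end{align}
uniformly on $[0,T]$, almost surely.

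Putting the three pieces together, rearranging the defining identity for $M^{N,f}_t$ and passing to the limit $N\to\infty$ (along a subsequence realizing both the a.s.\ convergence from Skorokhod and the convergence in probability of the martingale) yields
\begin{align}
f(\zeta_t) - f(\zeta_0) - \int_0^t \lambda(f)(\zeta_s)\,ds = 0 \quad \text{for every } t\ge 0,
\end{align}
almost surely. By continuity of both sides in $t$, the exceptional null set can be chosen independent of $t$ and of a countable set of $f$'s; since $\mathcal D_V$ is linear and the identity depends linearly on $f$, a density argument is not needed — we simply run the argument for each individual $f\in \mathcal D_V$. This proves that $\zeta$ satisfies \textbf{(A)} almost surely. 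The main obstacle, though entirely routine, is the quadratic variation estimate $L_N f^2 - 2f\,L_N f = O(1/N)$; everything else is a direct consequence of Lemma~\ref{aux} and the a.s.\ uniform convergence~\eqref{uocz}.
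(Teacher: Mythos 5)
Your proof is correct, and it takes a mildly but genuinely different route from the paper's. You kill the fluctuation term at the pre-limit level: the discrete carr\'e du champ bound $L_N f^2 - 2f\,L_Nf = O(1/N)$ (the same computation as \eqref{s222}, valid because $\nabla f$ is bounded near the compact $\Sigma$ and each squared increment is $O(1/N^2)$ against the prefactor $N$) gives $\E[\langle M^{N,f}\rangle_T]\le CT/N$, Doob's inequality makes $\sup_{t\le T}|M^{N,f}_t|$ vanish in probability, and you then pass to the limit in the remaining identity using Lemma~\ref{aux} and the a.s.\ uniform convergence \eqref{uocz}. The paper instead passes to the limit first, showing via test functionals $\Psi(\zeta_{s_1})\cdots\Psi(\zeta_{s_k})$ that $M^f_t=f(\zeta_t)-f(\zeta_0)-\int_0^t\lambda(f)(\zeta_s)\,ds$ is a martingale for the limit process, and only then kills it by computing the limiting carr\'e du champ $\Gamma(f,g)=\lambda(fg)-g\lambda(f)-f\lambda(g)$, which vanishes by Leibniz's rule since $\mathcal D_V$ is an algebra (citing Lemma~6.7 of \cite{BCL}), whence $\E[(M^f_t)^2]=0$. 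The two quadratic-variation computations are the discrete and continuous incarnations of the same fact; your version is more self-contained (no appeal to \cite{BCL} or to the abstract ``limits of martingales are martingales'' step), while the paper's is the more standard martingale-problem formulation. One remark: your closing sentence about linearity of $f\mapsto M^f$ does not actually dispose of the issue that the null set depends on $f$ and $\mathcal D_V$ is uncountable; but the paper's proof has exactly the same (harmless, conventionally glossed) gap, so this is not a defect relative to it.
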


\begin{proof}
Let $f\in {\mathcal D}_V$. Let us first prove that 
\begin{align}
M^f_t = f(\zeta_t)-f(\zeta_0)-\int_0^t \lambda(f)(\zeta_s)\, ds, \quad t\ge0,
\end{align}
determines a martingale with respect to the filtration generated by $\zeta$:
\begin{align}
\mathcal F_t := \sigma(\zeta_s:0\le s\le t), \quad t\ge 0. 
\end{align}
Fix some $s\ge 0$, $k\in \N$, times $(s_1,s_2,\dots,s_k)\in[0,s]^k$, continuous functions $\Psi_i:\Sigma\to \R$, $i=1,\dots,k$ and define $\Psi(X)=\Psi_1(X_{s_1}) \Psi_2(X_{s_2}) \cdots \Psi_k(X_{s_k})$, for $X\in D(\R_+,\Sigma)$. In virtue of \eqref{uocz}, we have that
\begin{align}\label{tgtg}
E\Big[\Psi(\zeta)\Big( f(\zeta_t)-f(\zeta_s)-\int_s^t \lambda(f)(\zeta_r)\, dr\Big) \Big]
\end{align}
is the limit of
\begin{align}
E\Big[\Psi(\zeta^N)\Big( f(\zeta^N_t)-f(\zeta^N_s)-\int_s^t \lambda(f)(\zeta^N_r)\, dr\Big) \Big], \quad \text{as $N\to\infty$.}
\end{align}
But, for all $N\ge 1$, we have
\begin{align}
E\Big[\Psi(\zeta^N)\Big( f(\zeta^N_t)-f(\zeta^N_s)-\int_s^t L_N f(\zeta^N_r)\, dr\Big) \Big] = 0.
\end{align}
So, by Lemma \ref{aux}, the expected value in \eqref{tgtg} vanishes. Then, we conclude that $M^f_t$, $t\ge 0$ is a $(\mathcal F_t)$-martingale, for any $f\in \mathcal D_V$. Now, $\mathcal D_V$ is an algebra, that is
\begin{align}
f,g\in \mathcal D_V \quad \implies \quad fg\in \mathcal D_V.
\end{align}
Therefore, (see for instance Lemma 6.7 in \cite{BCL}) for any $f,g\in \mathcal D_V$, we have that
\begin{align}
M^f_t M^g_t - \int_0^t \Gamma(f,g)(\zeta_s)\,ds, \quad t\ge 0,
\end{align}
is a $(\mathcal F_t)$-martingale, where
\begin{align}
\Gamma(f,g) = \lambda(fg) - g\lambda(f) - f \lambda(g).
\end{align}
By Leibniz rule, $\Gamma$ vanishes and so $E[(M^f_t)^2]=0$, for all $t\ge 0$ and $f\in \mathcal D_V$. We conclude that
\begin{align}
f(\zeta_t)-f(\zeta_0)-\int_0^t \lambda(f)(\zeta_s)\, ds = 0, \quad \forall t\ge0,
\end{align}
almost surely, as desired.
\end{proof}

\subsection{Coupled zero-range processes}\label{coupling}

We introduce a coupling method that will allow us to construct and compare zero-range processes associated to different rate functions, for all initial particle configurations, in the same probability space.
\vspace{1mm}

{\bf Graphical representation\ \ }   
Independently for each site $i \in V$, consider an intensity $1$-Poisson process $\Gamma^i$ on the first quadrant $\{(t,u)\in [0,\infty)\times[0,\infty)\}$, and an independent sequence of i.i.d.~random variables $\big(Y_n^i\big)_{n\ge 1}$ taking values in $V$ with distribution $(p(i,j))_{j\in V}$, with $p(i,j)=\frac{r(i,j)}{r(i)}$, $(r(i,j))_{i,j \in V}$ the rates of the random walk. Let $\Gamma=(\Gamma^i)_{i\in V}$. We will now give an explicit construction of the zero-range process $(\eta_t,\,t\ge 0)$ associated to an initial configuration 
$\eta_0$, a bounded rate function $\gamma(n)$ and the underlying $\big(r(i,j)\big)_{i,j\in V}$ random walk. Assume that the zero-range process $(\eta_s,\ s<t)$ has been built up to time $t-$, and that up to time $t-$ exactly $m$ particles have exited site $i$. By the fact that $(\gamma(n))_{n\ge 0}$ is bounded, $m<\infty$ with probability $1$. Then, if the Poisson point process $\Gamma^i$ has an atom at $(t,u)$ and $\eta_{t-}(i)\ge 1$,  the site $i$ will lose a particle if $r(i)\gamma\big(\eta_{t-}(i)\big)\ge u$. If that is the case, the particle will jump to the position $Y^i_{m+1}$.

The sequence of i.i.d random variables $\big(Y_n^i\big)_{n\ge 1}$ associated to each site $i\in V$ works as an ordered stack of instructions prescribing the next position of each particle that jumps out of $i$, independently of the time at which the jump is performed. 

	\begin{remark}\label{count}
		{\rm Let $(\eta_t,\,t\ge 0)$ and $(\sigma_t,\,t\ge 0)$ be coupled versions of zero-range processes with rates $(g(n)\big)$ as in \eqref{rate-f}-\eqref{rate-fbis}, and $\big(\one_{n\ge 1}\big)$, respectively, such that 
		their initial configurations $\eta_0,\,\sigma_0 \in \N_0^V$ satisfy 
	\begin{align}\label{ibound}
	\eta_0(i)\ge \sigma_0(i),\, i\in V.
	\end{align}

		Given $i\in V$, let
	\begin{align}\label{arrivals}
	&J_t(\eta,i)=\#\{0\le s\le t,\, \eta_s(i)=\eta_{s-}(i)+1\}\\[1mm]
	&\hspace{4cm} \text{and}\quad  J_t(\sigma,i)=\#\{0\le s\le t,\, \sigma_s(i)=\sigma_{s-}(i)+1\}\notag
	\end{align}
		be the discrete processes counting the number of particles that arrive at $i$ over the interval $[0,t]$, for the $\eta$ and $\sigma$ processes, $t\ge 0$. With the graphical construction above 
		and initial configurations as in \eqref{ibound}, the process $(\eta_t,\,t\ge 0)$ goes through the instructions $(Y_n^i)_{i\in V,\,n\ge 1}$ faster that $(\sigma_t,\,t\ge 0)$, and
	\begin{align}\label{d2}
	J_t(\sigma,i)\le J_t(\eta,i)\quad\text{for all }t\ge 0, \text{ for a.e. realization of } (\eta_t,\,t\ge 0),\,(\sigma_t,\,t\ge 0),\,i\in V.
	\end{align}
	}
	 \end{remark}
 We now consider a family of particularly simple initial configurations, 
so that for a process started from such a configuration, coordinates that are initially non-zero remain positive for all times, and they operate as sources. Given a set $S\subset V$, define $\theta_S \in (\N_0\cup\{+\infty\})^V$ by
	\begin{align}\label{oneA}
	\theta_S(i)=\left\{\begin{array}{ll}
	0\, &i\in V\setminus S,\\
	+\infty\,&i\in S.
	\end{array}
	\right.
	\end{align}\
We also consider extended rate functions: for $\gamma:\N_0\to \R_{\ge 0}$, let 
	\begin{align}\label{d1}
	\bar{\gamma}(n)=\left\{\begin{array}{ll} \gamma(n),\,&n\in \N_0, \\ 1,\,&n=+\infty. \end{array}\right.
	\end{align}

We provide a proof of the following result to keep the presentation self-contained. Alternatively, it is a consequence of the results in \cite{chen-mandelbaum-fluid}.

	\begin{lemma}\label{queue}
	Let $S\subset V$ be non $r$-absorbing and $C:=\mathcal{A}(S)\setminus S$, $\mathcal A(S)$ the set in Definition \ref{def-min-abs}.
		Let $(\sigma_t^{(S)},\,t\ge 0)$ be the zero-range process with initial configuration $\theta_S$ and extended rates $\bar{\one}_{n\ge 1}$, constructed with the graphical 			representation. Let $\delta>0$. There exists $0<t_\delta<\infty$ such that 
			\begin{align}\label{d3}
			P\Big(\,\inf_{t_\delta\le t}\,\frac{\sigma^{(S)}_t(j)}{t}\ge \frac{\lambda^{\mathcal A(S)}(j)}{2}\,\Big)\ge 1-\delta,\qquad j\in C.
			\end{align}
	\end{lemma}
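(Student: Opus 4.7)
The plan is to couple $\sigma^{(S)}$ with a sequence of closed Jackson networks of growing size, invoke the fluid-limit theorem of Chen and Mandelbaum \cite{chen-mandelbaum-fluid}, and then identify the limit explicitly through the machinery already developed in Section~\ref{charac}.

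For each integer $m\ge 1$ let $\sigma^{(S),m}$ denote the zero-range process on $V$ with the same constant rates $\one_{n\ge 1}$, driven by the same graphical construction as $\sigma^{(S)}$, but starting from $m\,\one_S\in\N_0^V$. Since $\one_{n\ge 1}$ is non-decreasing, the standard attractiveness of the zero-range process (of which Remark~\ref{count} is a rate-variant form) yields the monotonicity
\begin{align*}
\sigma^{(S),m}_t(j)\le\sigma^{(S)}_t(j),\qquad j\in V\setminus S,\ t\ge 0.
\end{align*}
It therefore suffices to prove the claim with $\sigma^{(S)}$ replaced by $\sigma^{(S),m}$ for a suitable $m=m(\delta)$.

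The process $\sigma^{(S),m}$ is a closed Jackson network on $V$ with $N_m:=m|S|$ particles and rescaled initial profile $u_0:=|S|^{-1}\one_S\in\Sigma$. Chen and Mandelbaum's theorem gives that $N_m^{-1}\sigma^{(S),m}_{N_m\cdot}$ converges as $m\to\infty$, in probability and uniformly on compact subsets of $\R_+$, to the ORP solution $\zeta^{\lambda,u_0}$ with input $\xi_t^{u_0}=u_0+t\lambda$. By Proposition~\ref{porp} this solution coincides with the piecewise linear trajectory $\zeta^{u_0}$ of \eqref{zetaudef}. Since $\mathcal S(u_0)=S$ and $\mathcal A(u_0)=\mathcal A(S)$, its first segment reads
\begin{align*}
\zeta^{u_0}_t(j)=u_0(j)+t\,\lambda^{\mathcal A(S)}(j)\one_{\{j\in\mathcal A(S)\}},\qquad t\in[0,T_1),
\end{align*}
for some $T_1>0$, and for $j\in C$ this becomes $t\,\lambda^{\mathcal A(S)}(j)$ with $\lambda^{\mathcal A(S)}(j)>0$ by Lemma~\ref{min-abs}. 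Fix then $t_0\in(0,T_1)$ and $t_0^*\in(0,t_0)$. The uniform-on-compacts convergence provides, for every $\varepsilon>0$, an $m_0$ such that with probability at least $1-\delta/2$
\begin{align*}
\sigma^{(S),m_0}_s(j)/s\ge \lambda^{\mathcal A(S)}(j)(1-\varepsilon)
\end{align*}
uniformly for $s\in[t_0^* N_{m_0}, t_0 N_{m_0}]$ and all $j\in C$, and the coupling transfers this bound to $\sigma^{(S)}$. To extend the bound to all $s\ge t_\delta:=t_0^*N_{m_0}$ I would consider the geometric sequence $m_k=(t_0/t_0^*)^k m_0$, whose windows $[t_0^*N_{m_k},t_0N_{m_k}]$ overlap and cover $[t_\delta,\infty)$, and union-bound the failure events over $k$.

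The principal obstacle is making that last step quantitative: Chen--Mandelbaum's theorem is a statement in probability, whereas the union bound requires exponential tails for the failure events, summable in $k$. The martingales $M^{N,j}$ appearing in the proof of Proposition~\ref{tight2} are compensated Poisson integrals with predictable bracket of order $N_m s$, so Bennett's or Freedman's inequality yields exactly such an exponential tail of rate $\asymp m_k$, which is ample for summability. An alternative workaround is to use the monotonicity $m\mapsto \sigma^{(S),m}_t(j)$ together with a restart argument at the endpoint of each window to propagate the bound from $m_0$ to all larger $m_k$ without invoking exponential concentration, at the cost of slack in the constant $1-\varepsilon$ which is absorbed by the factor $1/2$ present in the statement.
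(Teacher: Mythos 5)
Your strategy is sound and genuinely different from the paper's. The authors prove Lemma \ref{queue} by working directly with the infinite-source process $\sigma^{(S)}$: they write its martingale decomposition \eqref{ito}, rescale time by $2^n$, bound the martingale term via Doob's $L^2$ and Chebyshev's inequalities with threshold $2^{-n/3}$, identify the limiting path as the solution of the ORP on the orthant $\R_+^{V\setminus S}$ (unique by Harrison--Reiman \cite{harrison-reiman}, and computed explicitly through Proposition \ref{porp}), and then intersect the good events $\mathcal O_n$ over $n\ge n_0$. You instead truncate to finite closed Jackson networks $\sigma^{(S),m}$, invoke Chen--Mandelbaum \cite{chen-mandelbaum-fluid} as a black box on each window, and patch geometric windows together by monotonicity in the initial condition; the paper itself remarks that this route is available, but chooses the self-contained one. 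Two caveats. First, the domination $\sigma^{(S),m}_t(j)\le\sigma^{(S)}_t(j)$ does not follow from the paper's graphical construction: with shared instruction stacks $(Y^i_n)$, Remark \ref{count} only yields domination of the \emph{arrival counts} $J_t(\cdot,j)$, not of the occupation numbers, since the two processes read different entries of the stacks and may route particles differently. You need the standard basic (attractive) coupling for the non-decreasing rate $\one_{\{n\ge 1\}}$, which is legitimate here because \eqref{d3} is a purely distributional statement, but it should be said explicitly. Second, your diagnosis that the union bound over windows ``requires exponential tails'' is too pessimistic: since the windows grow geometrically, any summable-in-$k$ tail suffices, and the paper obtains $O(2^{-n/3})$ per scale from the $L^2$ maximal inequality alone. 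Your Freedman/Bennett route would of course also work, but to use it you must open up the Chen--Mandelbaum black box anyway (their theorem is qualitative), at which point you are essentially reproving the quantitative estimate the paper carries out; as written, this step of your argument is a correct plan rather than a completed proof.
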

\begin{proof}
By the martingale decomposition of the zero-range process, 
	\begin{align}\label{ito}
	\sigma^{(S)}_t(j)=\lambda(j) t+M_t(j)+[(I-p^t)R_t](j),\qquad t\ge 0,\, j\in V\setminus S,
	\end{align}  
where $\big(M_t(j),t\ge 0\big)$ is a martingale, $p^t$ is the transpose of the matrix $p$ with entries $p_{ij}=p(i,j)$, $i,j\in V$, and $R_t=\big(R_t(k)\big)_{k\in V}$ is defined as 
	\[
	R_t(k)=r(k) \int_0^t \one_{\sigma^{(S)}_s(k)=0}\,ds,\quad k\in V.
	\] 
Let now $n\in \N$. Changing variables in \eqref{ito} we get 
	\begin{align}\label{ito2}
	\frac{\sigma^{(S)}_{2^nt}(j)}{2^n}=\lambda(j) t+\frac{M_{2^nt}(j)}{2^n}+[(I-p^t)R^n_t](j),\qquad t\ge 0,\,j\in V\setminus S,
	\end{align}
$R^n_t(k):=r(k)\int_0^t \one_{\sigma^{(S)}_{2^n s}(k)=0}\,ds$. From Chebychev's and Doob's $L^2$ inequalities it follows that for any time $T>0$,
	\begin{align}\label{doob}
	P\Big(\sup_{0\le t\le T}\frac{|M_{2^nt}(j)|}{2^n}\ge \frac{1}{2^{n/3}}\Big)\le K \frac{T}{2^{n/3}},\qquad j\in V\setminus S,
	\end{align}
where $K$ is a positive constant that does not depend on $n$ or $T$.

We will now compare the process $\big(\frac{\sigma^{(S)}_{2^nt}}{2^n},\,t\ge 0\big)$ in \eqref{ito2} with the first coordinate of the solution $\big((\zeta^{\lambda,\theta_S}_t,\rho^{\lambda,\theta_S}_t),\,t\ge 0\big)$ to the ORP  in the orthant $\R_+^{V\setminus S}$ 
	\begin{align}\label{solu}
	\zeta^{\lambda,\theta_S}_t(j)=\lambda(j) t+[\big((I-p^t)_{ij}\big)_{i,j\in V\setminus S}\,\rho^{\lambda,\theta_S}_t](j),\quad t\ge 0, \,j\in V\setminus S.
	\end{align}
It follows from the fact that $p$ is irreducible that $\big(p^t_{ij}\big)_{i,j\in V\setminus S}$ is a non-negative  matrix with zeros on the diagonal and spectral radius strictly less than $1$. By Theorem 1 in \cite{harrison-reiman}, the solution to \eqref{solu} is unique.

On the other hand, by Proposition \ref{porp}, the unique solution $\big((z_t,y_t),\,t\ge 0\big)$ to the ORP in $\Sigma$, 
	\begin{align}\label{4.40}
	z_t=x_0+\lambda t+(I-p^t)y_t,\quad x_0\in \Sigma,\, \mathcal{S}(x_0)=S,
	\end{align}
satisfies 
	\begin{align*}
	&z_t(i)=x_0(i)+\lambda^{\mathcal{A}(S)}(i)\,t,\quad i\in \mathcal{A}(S),\\
	&z_t(j)=0,\quad j\in V\setminus \mathcal{A}(S),
	\end{align*}
for $t\le \tau=\inf\{s\ge 0,\, \prod_{i\in S}(x_0(i)+\lambda^{\mathcal{A}(S)}(i)s) =0\}$. In particular, for $t\le \tau$,
	\begin{align*}
	&(I-p^t)y_t=(\lambda^{\mathcal{A}(S)}-\lambda)t \quad \text{and}\quad y_t(i)=0,\,\quad i\in \mathcal{A}(S),
	\end{align*}
and denoting by $Q^S \in \R^{V\setminus S\times V\setminus S}$ the inverse of the submatrix $\big((I-p^t)_{ij}\big)_{i,j \in V\setminus S}$, it follows that 
	\begin{align}\label{4.41}
	y_t(j)=\Big[ Q^S\,\big(\lambda^{\mathcal{A}(S)}(i)-\lambda(i)\big)_{i\in V\setminus S}\Big](j)\times t, \qquad t\le \tau,\quad j\in V\setminus S.
	\end{align}
Combining representation \eqref{4.41} of $\big(y_t,\, 0\le t\le \tau\big)$ with the properties it satisfies as part of the solution to the ORP in \eqref{4.40}, we conclude that  the vector $Q^S\,\big(\lambda^{\mathcal{A}(S)}(i)-\lambda(i)\big)_{i\in V\setminus S} \in \R^{V\setminus S}$ is such that its coordinates in $\mathcal{A}(S)\setminus S$ vanish, while those in $V\setminus \mathcal{A}(S)$ are non-negative. For $t\ge 0$, let us define
	\begin{align*}
	&\rho_t=Q^S\,\big(\lambda^{\mathcal{A}(S)}(i)-\lambda(i)\big)_{i\in V\setminus S}\times t \qquad \text{ and }\qquad 
	\zeta_t=\lambda t+(\big(I-p^t)_{ij}\big)_{i,j\in V\setminus S}\,\rho_t\,,\\
	\intertext{so that}
	&\zeta_t(i)=\lambda^{\mathcal{A}(S)}(i) \,t, \quad i\in C,\\
	&\zeta_t(j)=0,\quad \qquad j\in V\setminus \mathcal{A}(S).
	\end{align*}
The pair $\big((\zeta_t,\rho_t),\,t\ge 0)$ above verifies the conditions of the ORP \eqref{solu}, and by uniqueness of the solution, we conclude that 
	\begin{align*}
	\zeta^{\lambda,\theta_S}_t(i)=\left\{
	\begin{array}{ll}
	\lambda^{\mathcal A(S)}(i)\, t\,\qquad&i\in C\\[1mm]
	0&i\in V\setminus  \mathcal A(S)
	\end{array}\right.,
	\quad t\ge 0.
	\end{align*}
	
Let $0<t_0<\frac{T}{2}$, and $0<\epsilon<\frac{t_0}{2} \min_{j\in C}\lambda^{\mathcal A(S)}(j)$. 
By the continuity in the input function of the reflection mapping, the processes in \eqref{ito2} and \eqref{solu} are close in the topology of uniform convergence over compact intervals, if the martingale terms are small. That is, there exists $n_0$ so that if $n\ge n_0$,  $\frac{1}{2^{n/3}}$ is small enough that
	\begin{align}\label{d4}
	\sup_{0\le t\le T} \Big|\,\frac{\sigma^{(S)}_{2^n t}(j)}{2^n}-\zeta^{\lambda,\theta_S}_t(j)\,\Big|\le \epsilon,\qquad j\in V\setminus S,
	\end{align}
in the set ${\mathcal O}_n=\medcap_{j\in V\setminus S}\big\{\sup_{0\le t\le T}\frac{|M_{2^nt}(j)|}{2^n}\le \frac{1}{2^{n/3}}\big\}$.	
It follows from \eqref{d4} that for realizations in ${\mathcal O}_n$,
	\begin{align}\label{d5}
	\sup_{t_0\le t\le T} \Big|\,\frac{\sigma^{(S)}_{2^n t}(j)}{2^n t}-\lambda^{\mathcal A(S)}(j)\,\Big|\le \frac{\epsilon}{t_0}\le \frac12 \min_{j\in C}\lambda^{\mathcal A(S)}(j),\quad \text{and}\,
	\inf_{t_0\le t\le T} \frac{\sigma^{(S)}_{2^n t}(j)}{2^n t}\ge \frac{\lambda^{\mathcal A(S)}(j)}{2},
	\end{align}
$j\in C$. From \eqref{d5} and the choice $t_0<\frac{T}{2}$ that for realizations in $\cap_{n\ge n_0} {\mathcal O}_n$, we conclude that
	\begin{align*}
	\inf_{s\ge 2^{n_0}t_0} \frac{\sigma^{(S)}_{s}(j)}{s}\ge \frac{\lambda^{\mathcal A(S)}(j)}{2},\qquad j\in C,
	\end{align*}
with $P\big( (\cap_{n\ge n_0} {\mathcal O}_n)^c \big)\le KT\sum_{n\ge n_0}\frac{1}{2^{n/3}}$. Choose $n_0$ so that this series is bounded by $\delta$, and let $t_{\delta}:=2^{n_0}t_0$ to obtain \eqref{d3}.
\end{proof}

The following result implies that limits of the rescaled zero-range process instantaneously exit non $r$-absorbing boundaries. Let $(\zeta^N_t,\,t\ge 0)$ be the process with law $\P_{\zeta_0}^N$ defined in \eqref{zeta}. 
	\begin{lemma}\label{now-eta}
	Let $S\subset V$ be non-absorbing, $C=\mathcal{A}(S)\setminus S$. For any pair of times $0<s<t$, $a>0$ and $0<b<\frac{\min_{j\in C}\lambda^{\mathcal A(S)}(j)}{12} \frac{t-s}{4}$, 
	\begin{align}\label{4.50}
	\limsup_{N\to \infty}\sup_{\zeta_0^N\in \Sigma_N} \P_{\zeta_0^N}^N\Big(\big\{\inf_{s<l<t}\zeta_l(i)>a, i\in S\big\}\medcap \big\{\inf_{j\in C}\inf_{\frac{s+3t}{4}<l<t}\zeta_l(j) <b \big\}\,\Big)=0.
	\end{align}	
	\end{lemma}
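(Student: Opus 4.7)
My strategy is to dominate the zero-range process on the favorable event $\mathcal{E}_N := \{\inf_{s<l<t}\zeta^N_l(i)>a,\,\forall i\in S\}$ by the infinite-source process $\sigma^{(S)}$ of Lemma~\ref{queue}, and then transfer its linear growth back to $\zeta^N$ through space-time rescaling. On $\mathcal{E}_N$ each site $i\in S$ contains at least $\lceil aN\rceil$ particles throughout $[sN,tN]$, so by \eqref{rate-fbis} it emits at rate at least $r(i)$ and, by \eqref{rate-f}, at rate $r(i)(1+o(1))$ as $N\to\infty$. I build the time-shifted process $(\eta_{sN+u})_{u\ge 0}$ and $\sigma^{(S)}$ on a common probability space via the graphical construction of \S\ref{coupling}, using identical Poisson processes $\Gamma^i$ and destination stacks $(Y^i_n)$ at every $i\in S$ restricted to the base layer $\{u\le r(i)\}$; on $\mathcal{E}_N$ each atom in this layer triggers an emission in both processes pointing to a common destination. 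Excess-rate atoms ($r(i)<u\le r(i)g(\eta(i))$) and all dynamics at $V\setminus S$ use independent randomness.

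The core estimate to establish is that for every $\varepsilon>0$,
\begin{equation}\label{plan-dom}
\limsup_{N\to\infty} \sup_{\zeta^N_0\in\Sigma_N}\P\Bigl(\mathcal{E}_N\cap\Bigl\{\inf_{j\in C}\inf_{0\le u\le(t-s)N}\tfrac{\eta_{sN+u}(j)-\sigma^{(S)}_u(j)}{N}<-\varepsilon\Bigr\}\Bigr)=0.
\end{equation}
This is shown via the semi-martingale decomposition of the coupled difference. The contributions split into: (a) the matched base-layer emissions from $S$ cancel pathwise; (b) the excess-rate emissions from $S$ occur at vanishing intensity $r(i)(g(\eta(i))-1)=o(1)$ on $\mathcal{E}_N$ by \eqref{rate-f} since $\eta(i)\ge aN\to\infty$, and contribute $o(N)$ over time $O(N)$; (c) the internal dynamics at $V\setminus S$ have bounded rates, and the drift discrepancy is controlled via a Remark~\ref{count}-style arrival-count domination combined with a martingale of quadratic variation $O(N)$, whose supremum is $o(N)$ by Doob's inequality. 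Since $\eta_{sN}(j)\ge 0=\sigma^{(S)}_0(j)$ for $j\in V\setminus S$, the initial difference is non-negative, so these contributions together yield \eqref{plan-dom}.

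Given \eqref{plan-dom}, fix $\delta>0$ and let $t_\delta$ be the time provided by Lemma~\ref{queue}. For $l\in(\tfrac{s+3t}{4},t)$ the elapsed unscaled time $u:=(l-s)N$ satisfies $u\ge \tfrac{3(t-s)}{4}\,N\ge t_\delta$ for $N$ large, and on the intersection of $\mathcal{E}_N$, the Lemma~\ref{queue} event, and the complement of the event in \eqref{plan-dom} with $\varepsilon$ chosen small,
\[
\zeta^N_l(j)=\tfrac{\eta_{sN+u}(j)}{N}\ge \tfrac{\sigma^{(S)}_u(j)}{N}-\varepsilon\ge \tfrac{\lambda^{\mathcal{A}(S)}(j)}{2}\cdot\tfrac{3(t-s)}{4}-\varepsilon>b,
\]
since $b<\tfrac{\min_{j\in C}\lambda^{\mathcal{A}(S)}(j)(t-s)}{48}$ is much smaller than $\tfrac{3}{8}\min_{j\in C}\lambda^{\mathcal{A}(S)}(j)(t-s)=\tfrac{18}{48}\min_{j\in C}\lambda^{\mathcal{A}(S)}(j)(t-s)$. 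This contradicts $\zeta^N_l(j)<b$, so the probability in \eqref{4.50} is bounded by $\delta+o(1)$; sending $\delta\to 0$ and $N\to\infty$ finishes the proof. The hard part will be the coupling estimate \eqref{plan-dom}: Remark~\ref{count} gives only an arrival-count domination, not pathwise occupancy domination, because the dynamics at $j\notin S$ involves two different rate functions $g$ and $\one_{n\ge 1}$; the heart of the argument is that the drift discrepancy is governed by $g(n)-1$ at integers of order $N$ (where it vanishes by \eqref{rate-f}), while the martingale fluctuations are of order $\sqrt{N}=o(N)$.
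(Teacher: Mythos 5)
Your overall architecture is the same as the paper's: couple $\eta_{sN+\cdot}$ with the infinite-source queue $\sigma^{(S)}$ launched at time $sN$ via the graphical construction, invoke Lemma~\ref{queue} for the linear growth $\sigma^{(S)}_u(j)\ge \tfrac12\lambda^{\mathcal A(S)}(j)u$, and transfer back using the choice of $b$. The final transfer step is fine. The gap is in item (c) of your plan for the domination estimate. At a site $j\in V\setminus S$ the occupancy $\eta_{sN+u}(j)$ is \emph{not} of order $N$: it may be $0$ at $u=0$, and its growth in $u$ is precisely what you are trying to establish. Hence the excess departure rate $r(j)\big(g(\eta_{sN+u}(j))-1\big)$ at $j$ is not governed by $g$ evaluated at integers of order $N$ and need not be $o(1)$. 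For a condensing rate function, $g$ is large at small occupancies, so $j$ ejects particles at rate up to $r(j)\max_n g(n)$ whenever its occupancy is below a fixed threshold $m_0$; the amount of time spent below $m_0$ is exactly what you do not control a priori, and your argument is circular at this point. If that extra outflow were allowed to act for a time of order $N$, the discrepancy with the queue would be of order $N$, not $o(N)$, and your claimed estimate would fail.

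The paper closes this circle with a bootstrap that your sketch does not contain: fix $m_0$ with $g(n)<1+\tfrac{\epsilon}{2}$ for $n\ge m_0$, stratify the Poisson atoms at $j$ into the layers $[0,1]$, $[1,1+\tfrac{\epsilon}{2}]$ and $[1+\tfrac{\epsilon}{2},\max_m g(m)]$, use a stopping time $\tau^N$ to wait for a stretch of length $t_1>6m_0/\epsilon$ containing no high-layer atoms, and show that during this quiet window the site accumulates more than $m_0$ particles, after which the high-layer atoms can no longer trigger departures and the linear growth is self-sustaining; the strong Markov property and translation invariance of the Poisson marks then give the probability bounds uniformly in $\zeta_0^N$. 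Some device of this kind is indispensable. A second, smaller, inconsistency: you declare the randomness at $V\setminus S$ independent between the two processes, but then appeal to a Remark~\ref{count}-style arrival domination at $j$, which requires the two processes to share the clocks and destination stacks at \emph{all} sites, since arrivals at $j$ may be routed through other sites of $V\setminus S$.
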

\begin{proof} 
Given an initial configuration $\zeta_0^N \in \Sigma_N$, construct $\zeta^N_l=\frac{\eta^N_{Nl}}{N},\,l\ge 0$, where $(\eta^N_u,\,u\ge 0)$ is obtained with the graphical representation. Fix $\delta>0$ and $j\in C$.

For $u>0$, let $H_u(\Gamma)$ denote the shift of the family of Poisson point processes $(\Gamma^i)_{i\in V}$ by $u$, so that $H_u(\Gamma)^i$ has an atom 
at $(v,x)$ if and only if $\Gamma^i$ has an atom at $(u+v, x)$. Let $\big(\sigma^{(S),u}_{v},\,v\ge 0\big)$ be 
the zero-range process with rates $\bar{1}_{k\ge 1}$ and initial configuration $\theta_S$ obtained from the graphical construction that uses the atoms of $H_u(\Gamma)$.

Given $0<s<t$, on the event $G_a^{S,N}(s,t):=\{\inf_{s<l<t}\zeta^N_l(i)>a, i\in S\}$, all sites $i\in S$ are occupied by at least one $\eta^N$-particle over the period $(Ns,Nt)$, and therefore 
every $\Gamma^i$-atom in the strip $(Ns,Nt)\times[0,1]$ causes the exit of an $\eta^N$-particle from site $i$. That is, no jump attempt used by  $(\sigma^{(S),Ns}_u)$ from sites in $S$ is missed by $(\eta^N_{Ns+u})$, $u\in (0,N(t-s))$, and a bound similar to \eqref{d2} holds for the arrival processes defined 
in \eqref{arrivals}: $J_u(\sigma^{(S),Ns},j)\le J_{Ns+u}(\eta^N,j)-J_{Ns}(\eta^N,j),\,0<u<N(t-s)$.

Define $\epsilon:=\frac{\lambda^{\mathcal{A}(S)}(j)}{2}>0$.
Let $m_0 \in \N$ be such that $g(n)<1+\frac{\epsilon}{2}$ if $n\ge m_0$, and $n_0=n_0(\delta) \in \N$ as in the proof of Lemma \ref{queue}. 
Let ${\mathcal O}(n_0):=\cap_{n\ge n_0}{\mathcal O}_n$, with $\mathcal O_n$ defined below equation \eqref{d4}.  For configurations in $G_a^{S,N}(s,t)$ such that, additionally, $H_{Ns}(\Gamma)\in {\mathcal O}(n_0)$, and times 
$2^{n_0}t_0\le t_1\le u\le N(t-s)$, with $t_0$ as in the proof of Lemma \ref{queue} and $t_1$ that we fix below, we have
	\begin{align}
\hspace{-0.9cm}	\eta^N_{Ns+u}(j)&\ge \eta^N_{Ns}(j)+J_{Ns+u}(\eta^N,j)-J_{Ns}(\eta^N,j)-\#\big\{0\le v\le u,\,\eta^N_{Ns+v}(j)=\eta^N_{Ns+v-}(j)-1\big\}\notag\\[1mm]
	&\ge J_u(\sigma^{(S),Ns},j)-\#\big\{0\le v\le u,\,\eta^N_{Ns+v}(j)=\eta^N_{Ns+v-}(j)-1\big\}\hspace{2.5cm} \text{by \eqref{d2}} \notag\\[1mm]
	&\ge J_u(\sigma^{(S),Ns},j)- \#\big\{0\le v\le u,\,\sigma^{(S),Ns}_v(x)=\sigma^{(S),Ns}_{v-}(j)-1\big\}\label{pri}\\[1mm]
	&\hspace{3.1cm}-H_{Ns}(\Gamma)^j\big([0,2^{n_0}t_0]\times[0,1]\big)\label{seg}\\[1mm]
	&\hspace{3.1cm}-H_{Ns}(\Gamma)^j\big([0,t_1]\times\textstyle{\big[1+\frac{\epsilon}{2},\displaystyle{\max_m g(m)}\big]}\big)\label{terc}\\[1mm]
	&\hspace{3.1cm}-H_{Ns}(\Gamma)^j\big([t_1,u]\times \textstyle{\big[1+\frac{\epsilon}{2},\displaystyle{\max_m g(m)}\big]}\big)\,\one_{\big\{\displaystyle{\min_{t_1\le v\le u} \eta^N_{Ns+v}(j)< m_0}\big\}}\label{cuart}\\
	&\hspace{3.1cm}-H_{Ns}(\Gamma)^j\big([0,u]\times\textstyle{\big[1,1+\frac{\epsilon}{2}\big]}\big).\label{quint}
	\end{align}
The first line on the right above, \eqref{pri}, equals $\sigma^{(S),Ns}_u(j)$. The second line, \eqref{seg}, accounts for Poisson atoms that may have determined jumps for 
$\eta^N_{Ns+\cdot}(j)$ but were missed by $\sigma^{(S),Ns}_\cdot(j)$, because at the time of the jump the queue at $j$ was empty: after time $2^{n_0}t_0$, on the other hand, on the set ${\mathcal O}(n_0)$,
the queue is busy at all times, and no jumps are missed. The expression in \eqref{cuart} counts the atoms of $H_{Ns}(\Gamma)^j$ that determine an exit for $\eta^N_{Ns+\cdot}(j)$ only when there are less than  than $m_0$ particles at the site and $g(\eta^N_{Ns+\cdot}(j))\ge1+\frac{\epsilon}{2}$, and \eqref{terc} and \eqref{quint} count the remaining atoms of 
$H_{Ns}(\Gamma)^j$ that may cause an exit from $\eta^N_{Ns+\cdot}(j)$.

Let now $t_1=t_1(\delta)>\frac{6m_0}{\epsilon}$ be such that 
 	\begin{enumerate}
 	\item[1.] $P\big(H_{Ns}(\Gamma)^j\big([0,2^{n_0}t_0]\times[0,1]\big)>\frac{\epsilon\,t_1}{6}\big)=P\big(\Gamma^j\big([0,2^{n_0}t_0]\times[0,1]\big)>\frac{\epsilon\,t_1}{6}\big)<\delta^2$, where the identity between these probabilities holds due to the translation invariance of Poisson processes, 
	\vspace{1mm}
	\item[2.] $P\big(H_{Ns}(\Gamma)^j\big([0,u]\times\big[1,1+\frac{\epsilon}{2}\big]\big)\le \frac{2}{3} \epsilon u,\, u\ge t_1\big)=P\big(\Gamma^j\big([0,u]\times\big[1,1+\frac{\epsilon}{2}\big]\big)\le \frac{2}{3} \epsilon u,\, u\ge t_1\big)\ge 1-\delta^2$.  Note that by the LLN for Poisson processes $\frac{1}{u}\Gamma^j ([0,u]\times [1,1+\frac{\epsilon}{2}])\xrightarrow[\,u\to \infty\,]{a.s.} \frac{\epsilon}{2}<\frac23 \epsilon$, hence the inequality will hold if $t_1$ is chosen large large enough.
 	\end{enumerate}
Denote
	\begin{align}\label{d6}
	{\mathcal E}(\delta):=\big\{\Gamma \in {\mathcal O}(n_0)\} &\medcap\big\{\Gamma^j\big([0,2^{n_0}t_0]\times[0,1]\big)\le \textstyle{\frac16} \epsilon t_1 \big\} \notag\\[1mm]
	&\hspace{0.5cm} 
	\medcap \big\{\Gamma^j\big([0,u]\times\textstyle{\big[1,1+\frac{\epsilon}{2}\big]}\big)\le \frac{2}{3} \epsilon u,\,  u\ge t_1\big\}\\
	&\hspace{0.5cm} \medcap \big\{\Gamma^j\big( [0,t_1]\times\textstyle{\big[1+\frac{\epsilon}{2}, \displaystyle{\max_m g(m)}\big]}\big)=0\big\}.\notag
	\end{align}
For a configuration $\omega \in G_a^{S,N}(s,t)\cap \{H_{Ns}(\Gamma)\in {\mathcal E}(\delta)\}$, it follows from \eqref{pri}-\eqref{quint}, \eqref{d6} and the choice of $\epsilon$, that
	\begin{eqnarray*}
	\eta^N_{Ns+u}(j)&\ge \sigma^{(S),Ns}_u(j)-\frac16 \epsilon t_1-\frac{2}{3} \epsilon u-H_{Ns}(\Gamma)^j\big([t_1,u]\times \textstyle{\big[1+\frac{\epsilon}{2},\displaystyle{\max_m g(m)}\big]}\big)\,
	\one_{\big\{\displaystyle{\min_{t_1\le v\le u }} \eta^N_{Ns+v}(j)< m_0\big\}}\\
	&\ge \frac16  \epsilon\, u -H_{Ns}(\Gamma)^j\big([t_1,u]\times \textstyle{ \big[1+\frac{\epsilon}{2},\displaystyle{\max_m g(m)}\big]}\big)\,
	\one_{\big\{\displaystyle{\min_{t_1\le v\le u }} \eta^N_{Ns+v}(j)< m_0\big\}},
	\end{eqnarray*}
$t_1\le u\le N(t-s)$. Since $t_1> \frac{6m_0}{\epsilon}$, the first term on the right above is strictly greater than $m_0$ for $u\ge t_1$, and then
	\begin{align}
	\label{final1}
	\eta^N_{Ns+u}(j)\ge \frac16\,\epsilon u\qquad t_1\le u \le N(t-s),\quad \omega\in G_a^{S,N}(s,t)\cap \{H_{Ns}(\Gamma)\in {\mathcal E}(\delta)\} .
	\end{align}
So far, we showed that $\eta^N_{Ns+\cdot}(j)$ grows at least linearly, for a macroscopic time interval, when the configuration belongs to the set $G_a^{S,N}(s,t)\cap \{H_{Ns}(\Gamma)\in {\mathcal E}(\delta)\}$. Among the conditions in definition \eqref{d6}, the one on the last line is the most restrictive. Therefore, in order to extend this result to a set with almost full probability $\P_{\zeta^N_0}^N(G_a^{S,N}(s,t))$, instead of requiring that $\Gamma^j$ have no marks in $[1+\epsilon/2, \max g(m)]$ over the interval $[Ns, Ns+t_1]$, we wait for the first stretch of time having
length $t_1$ where this constraint occurs, and couple with the queue from this time forward. We do this rigorously below.

Consider the stopping time
	\begin{align*}
	\tau^N=Ns+\inf\big\{u\ge t_1,\, H_{Ns}(\Gamma)^j\big([u-t_1,u]\times \textstyle{ \big[1+\frac{\epsilon}{2}, \displaystyle{\max_m g(m)}\big]}\big)=0\big\},
	\end{align*}
so that the shifted process $H_{\tau^N-t_1}(\Gamma)$ satisfies the condition on the third line of the definition of ${\mathcal E}(\delta)$ in \eqref{d6}. Note that $P(\tau^N<\infty)=1$ by Borel-Cantelli's lemma. Let 
	\begin{align*}
	{\Theta}^N(\delta)=\big\{\Gamma=(\Gamma^i)_{i\in V}:\, H_{\tau^N-t_1}(\Gamma)\in {\mathcal E}(\delta) \big\}.
	\end{align*}
By an argument similar to the one leading to \eqref{final1}, for realizations in $G_a^{S,N}(s,t)\cap{\Theta}^N(\delta)$ we get
	\begin{align*}
	\eta^N_{\tau^N+v}(j)\ge \frac16 \,\epsilon\,v\ge \frac{\lambda^{\mathcal A(S)}(j)}{12}\,v, \qquad 0\le v\le \max(Nt-\tau^N,0).
	\end{align*}
If, morever, $\{\tau^N\le N \frac{s+t}{2}\}$, and we write $u=\tau^N+v$, it follows that
	\begin{align*}
	\eta^N_u(j)&\ge \frac{\lambda^{\mathcal A(S)}(j)}{12} \big[u-\tau^N\big]\ge \frac{\lambda^{\mathcal A(S)}(j)}{12} \big(u-N\textstyle{\frac{s+t}{2}}\big),\quad &&N\textstyle{\frac{s+t}{2}}<u<Nt, \notag\\
	\intertext{or, in terms of $\zeta^N_{l}=\frac{\eta^N_{Nl}}{N}$, with $l=\frac{u}{N}$,}
	\zeta^N_l(j)&\ge \frac{\lambda^{\mathcal A(S)}(j)}{12} (l-\textstyle{\frac{s+t}{2}}), &&\textstyle{\frac{s+t}{2}}<l<t.
	\end{align*}
To summarize, $\big\{\inf_{\frac{s+t}{2}<l<t}\,\frac{\zeta^N_l(j)}{l-\frac{s+t}{2}}\ge \frac{\lambda^{\mathcal A(S)}(j)}{12}\big\}$ holds in the event $G_a^{S,N}(s,t)\cap{\Theta}^N(\delta)\cap \{\tau^N\le N \frac{s+t}{2}\}$. Let us estimate the probability of this event. 

Denote  by ${\mathcal F}_{\tau^N}$ the $\sigma$-algebra associated to $\tau^N$. We have
	\begin{align}\label{almost}
	P\big({\Theta}^N(\delta)\big)
	&=E\Big[P\Big(\big\{ (\sigma_u^{S,\tau^N-t_1},\,u\ge 0)\in{\mathcal O}(n_0)\big\}\medcap\big\{H_{\tau^N-t_1}(\Gamma)^j\big([0,2^{n_0}t_0]\times[0,1]\big)
	\le\textstyle{\frac16} \epsilon t_1\big\}
	\notag\\
	&\hspace{4.2cm} \medcap \big\{H_{\tau^N-t_1}(\Gamma)^j\big([0,u]\times\textstyle{\big[1,1+\frac{\epsilon}{2}\big]}\big)\le \frac{2}{3} \epsilon u,\, u\ge t_1\big\}
	\Big|{\mathcal F}_{\tau^N}\Big)\Big] \notag\\
	&=P\Big(\big\{(\sigma^{(S)}_u,\,u\ge 0)\in {\mathcal O}(n_0)\big\}\medcap\big\{\Gamma^j\big([0,2^{n_0}t_0]\times[0,1]\big)\le\textstyle{\frac16}\epsilon t_1\big\}
	\notag\\
	&\hspace{4.2cm} \medcap \big\{\Gamma^j\big([0,u]\times\textstyle{\big[1,1+\frac{\epsilon}{2}\big]}\big)\le \frac{2}{3} \epsilon u,\, u\ge t_1\big\}\Big),
	\end{align}
as the joint distribution of   
	\begin{align}\label{tau-shift}
\hspace{-5mm}	(\sigma^{(S),\tau^N\hspace{-1mm}-t_1}_u,\,u\ge 0),H_{\tau^N\hspace{-1mm}-t_1}(\Gamma)^j\big([0,2^{n_0}t_0]\times[0,1]\big),\  
	\big(H_{\tau^N\hspace{-1mm}-t_1}(\Gamma)^j\big([0,u]\times\textstyle{\big[1,1+\frac{\epsilon}{2}\big]}\big),\,u\ge t_1\big)
	\end{align}
given ${\mathcal F}_{\tau^N}$, is a.s. equal to the joint distribution of
	\begin{align*}
	(\sigma^{(S)}_u,u\ge 0), \ \ \Gamma^j\big([0,2^{n_0}t_0]\times[0,1]\big),\ 
	\big(\Gamma^j\big([0,u]\times\textstyle{\big[1,1+\frac{\epsilon}{2}\big]}\big),u\ge t_1\big).
	\end{align*}
This holds because $\tau^N$ is determined by the distribution of $\Gamma^j$-Poisson atoms in $\R_+\times[1+\frac{\epsilon}{2}, \max g(m)]$, while the processes and the variable in \eqref{tau-shift} depend on the $\Gamma^k$-atoms in $\R_+\times[0,1+\frac{\epsilon}{2}]$, $k\in V$, and the intersection of these regions has Lebesgue measure zero. The claim follows from the independence of the distribution of Poisson atoms in domains having measure zero intersection, and the translation invariance of Poisson processes.

 The events described in the last two lines of \eqref{almost} are determined by the distribution of Poisson marks in sets that have measure zero intersection, hence they are independent. It follows from \eqref{almost}, properties 1. and 2. of $t_1$, and Lemma \ref{queue}, that 
	\begin{align*}
	P\big( {\Theta}^N(\delta)\big)\ge [P({\mathcal O}(n_0))-\delta^2]\times(1-\delta^2)\ge 1-3\delta.
	\end{align*}
On the other hand,
	\begin{eqnarray*}
	P\big(\tau^N\le N\textstyle{\frac{s+t}{2}}\big)&=P\Big(\textstyle{\inf\big\{u\ge t_1,\,\Gamma^j\big([u-t_1,u]\times \big[1+\frac{\epsilon}{2}, \displaystyle{\max_m g(m)}\big]\big)=0\big\}\le \frac{N(t-s)}{2}}\Big)\to 1
	\end{eqnarray*}
hence $P\big(\tau^N\le N\frac{s+t}{2}\big)\ge 1-\delta$ if $N \ge N_\delta$ large enough. For $N\ge N_\delta$ we get
	\begin{align*}
	P\big( G_a^{S,N}(s,t)\cap{\Theta}^N(\delta)\cap \{\tau^N\le N\textstyle{\frac{s+t}{2}}\}\big)\ge P\big( G_a^{S,N}(s,t)\big)-4\delta,
	\end{align*}
and therefore
	\begin{align*}
	\P^N_{\zeta_0^N}\Big(\big\{\inf_{s<l<t}\zeta_l>a,\,i\in S\big\} \medcap \big\{\inf_{\frac{s+t}{2}<l<t} \frac{\zeta_l(j)}{l-\frac{s+t}{2}}<\frac{\lambda^{\mathcal A(S)}(j)}{12} \big\}\Big)\le 4\delta.
	\end{align*}
We restrict the set of times in the second event to $\frac{s+3t}{4}\le l\le t$, recall that $0<b<\frac{\min_{j\in C} \lambda^{\mathcal A(S)}(j)}{12} \frac{t-s}{4}$, and add the probabilities above over $j\in C$, to obtain
	\begin{align}\label{4.51}
	\P^N_{\zeta_0^N}\Big(\big\{\inf_{s<l<t}\zeta_l>a,\,i\in S\big\} \medcap \big\{\inf_{j\in C}\inf_{\frac{s+3t}{4}<l<t} \zeta_l(j)<b\big\}\Big)\le 4 \delta |V|, 
	\end{align}
$|V|$ the cardinality of V. The bound in \eqref{4.51} does not depend on the initial configuration, hence we might take the supremum over $\zeta_0^N$ and then the limit superior as $N\to \infty$. As $\delta$ is arbitrary, the result follows.
\end{proof}

\subsection{Fluid limits satisfy condition (B)}\label{dem-c2}

In this section we prove our main result.

{\it Proof of Theorem \ref{c2}.}
We may assume that the full sequence $\P^N_{u_N}$ converges weakly to a probability $\P$ on $C(\R_+,\Sigma)$.

Let $S_0={\mathcal S}(u)$. If $S_0$ is $r$-absorbing the result follows from Proposition \ref{conv} and Remark \ref{rem10}.

Otherwise, for $\delta>0$, define $S:[0,\delta]\to {\mathcal P}(V)$ by $S_t:={\mathcal S}(\zeta_t)$,  ${\mathcal P}(V)$ the power set of $V$. Let $|A|$ denote the cardinality of $A\in \mathcal P(V)$. Notice that if $(\zeta_t,\,t\ge 0)$ is continuous and $t_0 \in [0,\delta]$ is a local maximum of $|S_t|$, then $S_t$ is constant in an open neighbourhood of $t_0$. Consider the family of intervals
	\begin{align}\label{2.91}
	\mathcal I=\big\{(s,t):\,0\le s<t \le \delta,\, |S_r|=\max_{l\in [0,\delta]}|S_l| \,\,\text{ if } s<r<t\big\},
	\end{align}
and let 
	\begin{align*}
	s_\delta=\min\big\{0\le s: \text{ there is }s<t\le \delta,\, (s,t)\in \mathcal I, \text{ and }t-s=\max_{(s',t')\in \mathcal I}t'-s'\big\}.
	\end{align*}
Since $[0,\delta]$ is a bounded interval, the maximum in the definition above is achieved, and there are finitely many intervals in $\mathcal I$ with this length, hence $s_\delta$ is well-defined. Let $t_\delta$ be the 
right endpoint of the interval associated to $s_\delta$, $(s_\delta, t_\delta) \in \mathcal I$, set $\tau_\delta:=\frac{s_\delta+t_\delta}{2}\in (s_\delta, t_\delta)$. We claim that 
	\begin{align}\label{2.910}
	S_{\tau_\delta}=\mathcal S(\zeta_{\tau_\delta})\in \Sigma_{abs}, \qquad \P\text{-a.s.}.
	\end{align}
	
Consider ${\mathcal N}_\delta=\{\zeta_t,\,t\ge 0, S_{\tau_{\delta}} \text{ is not $r$-absorbing}\}$. By the observation preceding \eqref{2.91}, we have
	\begin{align*}
	{\mathcal N}_\delta \cap C(\R_+, \Sigma) \,\,\subseteq \displaystyle{\bigcup_{\substack{0\le s<t<\delta \\ s,\,t\in \Q\\  G\subseteq V \text{ not $r$-absorbing}}}} \big\{ {\mathcal S}(\zeta_r)=G,\,s\le r\le t\big\},
	\end{align*}
and, recalling that $\P$ is supported on $C(\R_+, \Sigma)$, to conclude that $\P({\mathcal N}_\delta)=0$ it suffices to show that 
	\begin{align*}
	\P\big({\mathcal S}(\zeta_r)=G,\,s\le r\le t \big)=0,
	\end{align*}
for any choice of times $s,\,t\in \Q$, $0<s<t<\delta$, and non $r$-absorbing set $G\in {\mathcal P}(V)$. Now 
	\begin{align*}
	&\P\big({\mathcal S}(\zeta_r)=G,\,s\le r\le t \big)
	=\lim_{a\downarrow 0}\lim_{b\downarrow 0}\P\Big(\{\inf_{s\le r\le t}\zeta_r(i)>a,\, i\in G\}\cap\big\{\inf_{s\le r\le t}\sum_{i\in G} \zeta_r(i)> 1-b \big\}\Big) \notag\\
	&\hspace{2cm}\le\lim_{a\downarrow 0}\lim_{b\downarrow 0}\liminf_{N\to \infty}P\Big(\{\inf_{s\le r\le t}\zeta^N_r(i)>a,\, i\in G\}\cap\big\{\inf_{s\le r\le t}\sum_{i\in G} \zeta^N_r(i)> 1-b \big\}\Big)\notag\\
	&\hspace{2cm}\le\lim_{a\downarrow 0}\lim_{b\downarrow 0}\liminf_{N\to \infty}P\Big(\{\inf_{s\le r\le t}\zeta^N_r(i)>a,\, i\in G\}\cap\big\{\sup_{j\in \mathcal A(G)\setminus G}\,\sup_{\frac{s+3t}{4} \le r\le t}\zeta^N_r(j)<b \big\}\Big)\notag\\
	&\hspace{2cm}=0
	\end{align*}
by \eqref{4.50} in Lemma \ref{now-eta}.
The second line above follows from the Portmanteau theorem and the remark  in \eqref{uocz}, as the set $\{\inf_{s\le r\le t}\zeta_r(i)>a,\, i\in G\}\cap\big\{\inf_{s\le r\le t}\sum_{i\in G} \zeta_r(i)> 1-b \big\}$ is open in $D(\R_+, \Sigma)$ with the local uniform topology. Then \eqref{2.910} holds.

Sicen $\tau_\delta\le \delta$, and $\delta$ is arbitrary, \eqref{2.910} implies that $\P\big(\{\inf_{t\ge 0},\,\zeta_t\in \Sigma_{abs}\}=0\big)=1$, i.e. $\P$ is supported on paths that satisfy condition {\bf(B)} in Definition~\ref{propab}. Proposition~\ref{conv} establishes that $(\zeta_t,\,t\ge 0)$ also satisfies condition {\bf (A)}, $\P$-a.s.. The result then follows from the uniqueness and identification of the solution to the $(\lambda, \mathcal D(V))$-problem, Proposition~\ref{promain}. \qed

\section{Perturbations to ${\mathcal D}_V$}\label{perturb}

We adapt the proof of the proof of Lemma 4.4 in \cite{BJL} to derive Lemma \ref{extens}.
 
 We will need the following result. 
	\begin{lemma}[Lemma 4.1 in \cite{BJL}]\label{BJL4.1} 
		Let $ D\subseteq V$ be nonempty.
		There exist a nonnegative function ${I_D:\Sigma\rightarrow \R}$
		 in $\mathcal{D}_D$ and constants $c$ and $C$ such that
	 		\begin{align}\label{a22}
			c \|\zeta\|_D^2 \leq I_D(\zeta) \leq C \|\zeta\|_D^2,
			\end{align}
		with $\|\zeta\|_D^2=\sum\limits_{j\in D}\zeta(j)^2$ and $0<c\le C<\infty$.
	\end{lemma}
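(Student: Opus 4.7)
The plan is to produce $I_D$ as a non-negative quadratic function on $\Sigma$ depending only on the $D$-coordinates, then verify the two-sided comparison with $\|\cdot\|_D^2$ via spectral bounds. I would look for $I_D(u) = \langle u, A u\rangle$ where $A\in\R^{V\times V}$ is symmetric and vanishes outside the $D\times D$ block, and translate the membership $I_D\in\mathcal D_D$ into an explicit linear condition on the entries of $A$.

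A direct computation shows that, for $j\in D$,
\begin{align*}
\bs v_j(I_D)(u) = 2\sum_{k\in D} u(k)\Bigl[\sum_{l\in D} r(j,l)\,A(l,k) - r(j)\,A(j,k)\Bigr].
\end{align*}
Since $\{u\in\Sigma:u(j)=0\}$ is the convex hull of the vertices $1_{\{k\}}$ for $k\neq j$, the vanishing of this expression on that face is equivalent to the off-diagonal linear system
\begin{align*}
A(j,k) = \sum_{l\in D,\,l\neq j} p(j,l)\,A(l,k), \qquad j\neq k \text{ in } D,
\end{align*}
with $p(j,l) := r(j,l)/r(j)$. Equivalently, each column $A(\cdot,k)|_D$ must be harmonic on $D\setminus\{k\}$ for the sub-Markov chain on $D$ obtained by killing the walk with rates $r$ upon exit from $D$.

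For $D\subsetneq V$, irreducibility of $r$ ensures the killed sub-chain is transient, and the Green's function gives the unique solution
\begin{align*}
A(j,k) = A(k,k)\,\P\bigl(T^j_k < T^j_{V\setminus D}\bigr),\qquad j,k\in D,
\end{align*}
leaving the positive diagonal $(A(k,k))_{k\in D}$ as free parameters. I would then impose the symmetry $A(j,k) = A(k,j)$ to obtain a smaller auxiliary system on these diagonal parameters; this system admits strictly positive solutions by a Perron--Frobenius argument applied to the strictly positive matrix of hitting-probability ratios (reducing to reversibility when $r$ is reversible). For the edge case $D=V$ the analogous derivation forces $A$ to be a scalar multiple of the all-ones matrix (since any $p$-harmonic function on $V\setminus\{k\}$ is constant by irreducibility), and I would instead take $I_V\equiv 1$, which lies trivially in $\mathcal D_V$ and satisfies the required bound against $\|u\|_V^2\in[1/|V|,1]$ on $\Sigma$.

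Once a symmetric positive-definite $A$ is in hand, the Rayleigh quotient yields $c\,\|u\|_D^2 \le \langle u,A u\rangle \le C\,\|u\|_D^2$ with $c$ and $C$ the smallest and largest eigenvalues of $A|_{D\times D}$; non-negativity is automatic from positive semi-definiteness. The main obstacle is the symmetrization step: for a general non-reversible walk, the symmetry condition $A(j,k)=A(k,j)$ translates into a cocycle-type condition on the hitting probabilities that need not be automatic, so the Perron--Frobenius/positivity argument on the free diagonal parameters to intersect the affine space of harmonic solutions with the open cone of positive-definite matrices is where most of the work lies.
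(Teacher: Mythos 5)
The paper does not actually prove this lemma --- it imports it from \cite{BJL} --- so I can only judge your argument on its own terms, and it breaks at precisely the point you flag as ``where most of the work lies''. Your reduction is correct as far as it goes: for a quadratic candidate $I_D(u)=\langle u,Au\rangle$ with $A$ symmetric, membership in $\mathcal D_D$ forces each column $A(\cdot,k)|_D$ to be harmonic on $D\setminus\{k\}$ for the walk killed on exiting $D$, hence $A(j,k)=A(k,k)\,q(j,k)$ with $q(j,k)=\P\bigl(T^j_k<T^j_{V\setminus D}\bigr)$; moreover the two-sided bound $c\|u\|_D^2\le I_D\le C\|u\|_D^2$, tested on the vertices and edges of $\Sigma$, already forces $A$ to vanish off the $D\times D$ block (for an edge between $j\notin D$ and $k\in D$ one gets $2\tfrac{t}{1-t}A(j,k)+A(k,k)\in[c,C]$ for all $t<1$, so $A(j,k)=0$), so your ansatz is not restrictive and there is no extra freedom to exploit. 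But then symmetry requires $A(k,k)q(j,k)=A(j,j)q(k,j)$ for all $j,k\in D$, i.e.\ the ratios $q(j,k)/q(k,j)$ must satisfy a multiplicative cocycle condition around every cycle in $D$. This is not a difficulty that a Perron--Frobenius argument can absorb: it is a consistency condition on an overdetermined linear system, and it fails for non-reversible $r$. Concretely, take $V=\{1,2,3,4\}$, $D=\{1,2,3\}$, with $r(1,2)=r(2,3)=r(3,1)=1$ and $r(i,4)=1$ for $i\le 3$ (any positive rates out of $4$). Then $q(1,2)=q(2,3)=q(3,1)=\tfrac12$ while $q(2,1)=q(3,2)=q(1,3)=\tfrac14$, so the symmetry constraints give $A(1,1)=2A(2,2)=4A(3,3)=8A(1,1)$, forcing $A\equiv 0$. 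Hence in this example \emph{no} homogeneous quadratic form lies in $\mathcal D_D$ and is comparable to $\|\cdot\|_D^2$, and your construction cannot produce $I_D$ in the generality of the present paper, where $r$ is an arbitrary irreducible, possibly non-reversible, rate matrix.

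What does survive: the case $D=V$ (your constant function, with $c=1$, $C=|V|$), the case $|D|\le 2$ (no cycles, hence no cocycle obstruction), and the reversible case, where the consistent choice is explicit rather than a fixed point of a positivity argument: $A=G_D\,\mathrm{diag}(1/\mu)$, with $G_D$ the Green function of the killed walk, is symmetric because $\mu(j)G_D(j,k)=\mu(k)G_D(k,j)$, has harmonic columns by construction, and is positive definite as the inverse of $\mathrm{diag}(\mu)(-L_D)$; the eigenvalue bounds then give $c$ and $C$. For general irreducible $r$, however, a proof of the lemma must leave the class of homogeneous quadratic forms altogether (or one must check that the setting of \cite{BJL} is effectively symmetric/reversible so that the quadratic construction applies there); as written, your proposal establishes the lemma only in the reversible case.
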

\textit{Proof of Lemma \ref{extens}.}
Let $\phi\in C^\infty(\R,[0,1])$ such that $\phi(x)=0, x\le \frac13$, and $\phi(x)=1, x\ge 1$. Let $\lambda:=\frac{c}{3C}$, $c$ and $C$ the constants in \eqref{a22}, and 
$\tilde{\epsilon}:=\big(\frac{c\lambda^{|B|+1}}{2}\big)^{\frac12}\,\frac{\epsilon}{2}$. For $\emptyset\neq D\subseteq V$, define
	\begin{align}\label{a23}
	\Phi_D(\zeta)=\phi\Big(\frac{\lambda^{|D|} I_D(\zeta)}{\tilde{\epsilon}^2}-1\Big),
	\end{align}
$|D|$ the cardinality of $D$. By Lemma \eqref{BJL4.1}, it is simple to check that this function satisfies
	\begin{enumerate}
	\item[\it i)] $\Phi_D(\zeta)=0$ if $\|\zeta\|^2_D\le \frac43\frac{1}{C\lambda^{|D|}}\,\tilde{\epsilon}^2$,
	\vspace{1mm}
	\item[\it ii)] $\Phi_D(\zeta)=1$ if $\|\zeta\|^2_D\ge \frac{2}{c \lambda^{|D|}}\,\tilde{\epsilon}^2$,
	\vspace{1mm}
	\item[\it iii)] $\Phi_D(\zeta) \in {\mathcal D}_D$. 
	\end{enumerate}
Define 
	\begin{align}
	\Phi_k(\zeta)&=\prod_{D\subseteq B} \Phi_{D\cup\{k\}}(\zeta),\, k\in S,\quad  \Phi(\zeta)=\prod_{k\in S}\Phi_k(\zeta),\quad \text{and let}\quad f(\zeta)=h(\zeta)\Phi(\zeta). \label{a24}
	\end{align}
It follows from the choices of $\lambda$ and $\tilde{\epsilon}$ and property {\it ii)} above that $\zeta(k)\ge \frac12 \epsilon$ implies $\Phi_k(\zeta)= 1$, $k\in S$, and hence 
$\Phi\equiv 1$ in a neighbourhood of $\{u\in \Sigma,\,\min_{i\in A}u(i)\ge \epsilon\}$. This establishes \eqref{a21}.

Let us now verify that $f\in {\mathcal D}_V$. Let $k\in S$ and $\zeta \in \Sigma$ with $\zeta(k)=0$. If $\xi \in \Sigma$ is such that 
$\|\xi-\zeta\|^2<\frac43\frac{1}{C\lambda}\,\tilde{\epsilon}^2$ then in particular $|\xi_k|^2<\frac43\frac{1}{C\lambda}\,\tilde{\epsilon}^2$, and {\it i)} implies 
$\Phi_{\{k\}}(\xi)=\Phi_{\emptyset\cup\{k\}}=f(\xi)=0$. This shows that $f\equiv 0$ in a neighbourhood of $\zeta$, hence $\nabla f(\zeta)=0$ and the boundary condition 
$\langle \nabla f(\zeta),v_k\rangle=0$ is trivially satisfied, proving that $f\in {\mathcal D}_A$.

Let us now check that $f\in {\mathcal D} _B$. Given $j\in B$, we have
	\begin{align}\label{a25}
	\langle \nabla f(\zeta) ,  v_j\rangle=\Phi(\zeta) \,\langle \nabla h(\zeta), v_j  \rangle + h(\zeta)\,\sum_{k\in S} \sum_{D\subseteq B} \big( \hspace{-3mm}\prod_{\substack{k'\in S,\ D'\subseteq B\\ D'\cup\{k'\}\neq D\cup\{k\}}}\hspace{-4mm}\,\Phi_{D'\cup\{k'\}}(\zeta)\big)\, \langle \nabla \Phi_{D\cup\{k\}}(\zeta), v_j \rangle.\ \
	\end{align}
Let $\zeta \in \Sigma$ such that $\zeta(j)=0$. The first term above vanishes since by hypothesis $h\in {\mathcal D}_B$. Also, we claim that each term in the double sum on the right hand side vanishes as well. To see this, notice that if the set $D\subseteq B$ is such that $j\in D$, then $\Phi_{D\cup\{k\}}\in {\mathcal D}_{D\cup\{k\}}$ by {\it iii)}, and $\langle \nabla \Phi_{D\cup\{k\}}(\zeta), v_j\rangle=0$. If, on the other hand, $j\notin D$, then $\Phi_{D\cup\{j,\,k\}}$ is one of the factors in the product in front of the brackets, 
	\begin{align}\label{a25bis} 
	(\prod_{\substack{D'\subseteq B \\ D'\neq D}}\hspace{-2mm}\,\Phi_{D'\cup\{k\}}(\zeta))\, \langle \nabla \Phi_{D\cup\{k\}}(\zeta), v_j \rangle=
	(\hspace{-3mm}\prod_{\substack{ D'\neq D \\D'\neq D\cup \{j\}}}\hspace{-4mm}\,\Phi_{D'\cup\{k\}}(\zeta))\, \Phi_{D\cup\{j,\,k\}}(\zeta)\,\langle \nabla \Phi_{D\cup\{k\}}(\zeta), v_j \rangle.
	\end{align}
Now, by {\it i)} and {\it ii)},
	\begin{align}
	\Phi_{D\cup\{j,\,k\}}(\xi)&=0\,\text{ if }\,\|\xi\|^2_{D\cup\{j,\,k\}}\le \textstyle{\frac43 \frac{1}{C\lambda^{|D|+2}}\tilde{\epsilon}^2},\label{a26}\\
	\Phi_{D\cup\{k\}}(\xi)&=1\,\text{ if }\,\|\xi\|^2_{D\cup\{k\}}\ge \textstyle{\frac{2}{c\lambda^{|D|+1}} \tilde{\epsilon}^2} \quad\text{and}\quad \nabla\Phi_{D\cup\{k\}}\equiv 0 \text{ on } \|\xi\|^2_{D\cup\{k\}}> \textstyle{\frac{2}{c\lambda^{|D|+1}}\tilde{\epsilon}^2}. \label{a27}
	\end{align}
With the choice of $\lambda=\frac{c}{3C}$ we have $ \frac43 \frac{1}{C\lambda^{|D|+2}}>\frac{2}{c\lambda^{|D|+1}}$. If $\zeta\in \Sigma$ is such that $\zeta(j)=0$ then
$\|\zeta\|_{D\cup\{j,\,k\}}=\|\zeta\|_{D\cup\{k\}}$, either \eqref{a26} or \eqref{a27} holds, and in both cases the expression in \eqref{a25bis} vanishes. This completes the proof of our claim, and it follows from \eqref{a25} that $\langle \nabla f(\zeta) ,  v_j\rangle=0$ when $\zeta(j)=0$. \qed

\section*{Acknowledgements}\label{ackn}
We are grateful to Matthieu Jonckheere and Daniel Valesin for very helpful discussions.

I.A.'s research was supported by grants PICT2015-3583 and PIP11220130100521CO.

\bibliographystyle{plain}

\begin{thebibliography}{10}

\bibitem{ambrosio-etal}
Luigi Ambrosio, Nicola Fusco, and Diego Pallara.
\newblock {\em Functions of bounded variation and free discontinuity problems}.
\newblock Oxford Mathematical Monographs. The Clarendon Press, Oxford
  University Press, New York, 2000.

\bibitem{AGL}
I.~Armend\'{a}riz, S.~Grosskinsky, and M.~Loulakis.
\newblock Metastability in a condensing zero-range process in the thermodynamic
  limit.
\newblock {\em Probab. Theory Related Fields}, 169(1-2):105--175, 2017.

\bibitem{AL}
I.~Armend\'ariz and M.~Loulakis.
\newblock Thermodynamic limit for the invariant measures in supercritical zero
  range processes.
\newblock {\em Probab. Theory Related Fields}, 145(1-2):175--188, 2009.

\bibitem{MR3071386}
In\'{e}s Armend\'{a}riz, Stefan Grosskinsky, and Michail Loulakis.
\newblock Zero-range condensation at criticality.
\newblock {\em Stochastic Process. Appl.}, 123(9):3466--3496, 2013.

\bibitem{BCL}
J.~Beltr\'{a}n, E.~Chavez, and C.~Landim.
\newblock From coalescing random walks on a torus to {K}ingman's coalescent.
\newblock {\em J. Stat. Phys.}, 177(6):1172--1206, 2019.

\bibitem{BJL}
J.~Beltr\'an, M.~Jara, and C.~Landim.
\newblock A martingale problem for an absorbed diffusion: the nucleation phase
  of condensing zero range processes.
\newblock {\em Probab. Theory Related Fields}, 169(3-4):1169--1220, 2017.

\bibitem{JBCL}
J.~Beltr\'an and C.~Landim.
\newblock Metastability of reversible condensed zero range processes on a
  finite set.
\newblock {\em Probab. Theory Related Fields}, 152(3-4):781--807, 2012.

\bibitem{BL}
J.~Beltr\'an and C.~Landim.
\newblock Tunneling and metastability of continuous time {M}arkov chains {II},
  the nonreversible case.
\newblock {\em J. Stat. Phys.}, 149:598--618, 2012.

\bibitem{billing-conv}
Patrick Billingsley.
\newblock {\em Convergence of probability measures}.
\newblock Wiley Series in Probability and Statistics: Probability and
  Statistics. John Wiley \& Sons, Inc., New York, second edition, 1999.
\newblock A Wiley-Interscience Publication.

\bibitem{chen-mandelbaum}
H.~Chen and A.~Mandelbaum.
\newblock Leontief {S}ystems, {RBV}'s and {RBM}'s.
\newblock {\em Proceedings of the Imperial College Workshop on Applied
  Stochastic Processes}, 1991.

\bibitem{chen-mandelbaum-fluid}
Hong Chen and Avi Mandelbaum.
\newblock Discrete flow networks: bottleneck analysis and fluid approximations.
\newblock {\em Math. Oper. Res.}, 16(2):408--446, 1991.

\bibitem{chen-mandelbaum-diffusion}
Hong Chen and Avi Mandelbaum.
\newblock Stochastic discrete flow networks: diffusion approximations and
  bottlenecks.
\newblock {\em Ann. Probab.}, 19(4):1463--1519, 1991.

\bibitem{chen-yao}
Hong Chen and David~D. Yao.
\newblock {\em Fundamentals of queueing networks}, volume~46 of {\em
  Applications of Mathematics (New York)}.
\newblock Springer-Verlag, New York, 2001.
\newblock Performance, asymptotics, and optimization, Stochastic Modelling and
  Applied Probability.

\bibitem{Drouffe_1998}
J.-M. Drouffe, C.~Godr{\`{e}}che, and F.~Camia.
\newblock A simple stochastic model for the dynamics of condensation.
\newblock {\em J. Phys. A}, 31(1):L19--L25, 1998.

\bibitem{Evans_2000}
M.~R. Evans.
\newblock Phase transitions in one-dimensional nonequilibrium systems.
\newblock {\em Braz. J. Phys.}, 30(1):42--57, 2000.

\bibitem{MR1987434}
C.~Godr\`eche.
\newblock Dynamics of condensation in zero-range processes.
\newblock {\em J. Phys. A}, 36(23):6313--6328, 2003.

\bibitem{MR2165701}
C.~Godr\`eche and J.~M. Luck.
\newblock Dynamics of the condensate in zero-range processes.
\newblock {\em J. Phys. A}, 38(33):7215--7237, 2005.

\bibitem{MR3584414}
Claude Godr\`eche and Jean-Michel Drouffe.
\newblock Coarsening dynamics of zero-range processes.
\newblock {\em J. Phys. A}, 50(1):015005, 24, 2017.

\bibitem{MR2013129}
Stefan Gro{\ss}kinsky, Gunter~M. Sch\"{u}tz, and Herbert Spohn.
\newblock Condensation in the zero range process: stationary and dynamical
  properties.
\newblock {\em J. Statist. Phys.}, 113(3-4):389--410, 2003.

\bibitem{harrison-reiman}
J.~Michael Harrison and Martin~I. Reiman.
\newblock Reflected {B}rownian motion on an orthant.
\newblock {\em Ann. Probab.}, 9(2):302--308, 1981.

\bibitem{MR4089493}
Jonathan Hermon and Justin Salez.
\newblock Cutoff for the mean-field zero-range process with bounded monotone
  rates.
\newblock {\em Ann. Probab.}, 48(2):742--759, 2020.

\bibitem{jacod-shiryaev}
Jean Jacod and Albert~N. Shiryaev.
\newblock {\em Limit theorems for stochastic processes}, volume 288 of {\em
  Grundlehren der mathematischen Wissenschaften [Fundamental Principles of
  Mathematical Sciences]}.
\newblock Springer-Verlag, Berlin, second edition, 2003.

\bibitem{MR3491315}
Watthanan Jatuviriyapornchai and Stefan Grosskinsky.
\newblock Coarsening dynamics in condensing zero-range processes and
  size-biased birth death chains.
\newblock {\em J. Phys. A}, 49(18):185005, 19, 2016.

\bibitem{MR1797308}
Intae Jeon, Peter March, and Boris Pittel.
\newblock Size of the largest cluster under zero-range invariant measures.
\newblock {\em Ann. Probab.}, 28(3):1162--1194, 2000.

\bibitem{KL}
C.~Kipnis and C.~Landim.
\newblock {\em Scaling limits of interacting particle systems}, volume 320 of
  {\em Grundlehren der Mathematischen Wissenschaften [Fundamental Principles of
  Mathematical Sciences]}.
\newblock Springer-Verlag, Berlin, 1999.

\bibitem{Landim_asymm}
C.~Landim.
\newblock Metastability for a non-reversible dynamics: the evolution of the
  condensate in totally asymmetric zero range processes.
\newblock {\em Comm. Math. Phys.}, 330(1):1--32, 2014.

\bibitem{LMS_critical}
C.~Landim, D.~Marcondes, and I.~Seo.
\newblock A resolvent approach to metastability, 2021.

\bibitem{Loula-Stama}
Michail Loulakis and Marios~Georgios Stamatakis.
\newblock Generalized young measures and the hydrodynamic limit of condensing
  zero range processes, 2019.

\bibitem{MR2314354}
G.~M. Sch\"{u}tz and R.~J. Harris.
\newblock Hydrodynamics of the zero-range process in the condensation regime.
\newblock {\em J. Stat. Phys.}, 127(2):419--430, 2007.

\bibitem{MR3922538}
Insuk Seo.
\newblock Condensation of non-reversible zero-range processes.
\newblock {\em Comm. Math. Phys.}, 366(2):781--839, 2019.

\bibitem{Spitzer}
F.~Spitzer.
\newblock Interaction of {M}arkov processes.
\newblock {\em Advances in Math.}, 5:246--290 (1970), 1970.

\bibitem{MR3296275}
Marios~Georgios Stamatakis.
\newblock Hydrodynamic limit of mean zero condensing zero range processes with
  sub-critical initial profiles.
\newblock {\em J. Stat. Phys.}, 158(1):87--104, 2015.

\end{thebibliography}

\end{document}